\newcommand{\cal}{\mathcal}
\newcommand{\comps}{\mbox{$\mathbb C$}}
\newcommand{\rats}{\mbox{$\mathbb Q$}}
\newcommand{\nats}{\mbox{$\mathbb N$}}
\newcommand{\ints}{\mbox{$\mathbb Z$}}
\newcommand{\power}{\mbox{$\mathbb P$}}
\newcommand{\supp}{{\mathop{\mathrm{supp}}\nolimits}}
\newcommand{\lcm}{{\mathop{\mathrm{lcm}}\nolimits}}
\newcommand{\comment}[1]{}
\def\squarebox#1{\hbox to #1{\hfill\vbox to #1{\vfill}}}
\def\qed{\hspace*{\fill}
        \vbox{\hrule\hbox{\vrule\squarebox{.667em}\vrule}\hrule}\smallskip}
\theoremstyle{plain}
\newtheorem{lemma}{Lemma}[section]
\newtheorem{theorem}[lemma]{Theorem}
\newtheorem{corollary}[lemma]{Corollary}
\newtheorem{proposition}[lemma]{Proposition}
\theoremstyle{definition}
\newtheorem{claim}[lemma]{Claim}
\newtheorem{observation}[lemma]{Observation}
\newtheorem{definition}[lemma]{Definition}
\newtheorem{example}{Example}
\newtheorem*{rmk*}{Remark}
\newtheorem*{rmks*}{Remarks}
\newtheorem*{conventions*}{Conventions}
\newtheorem*{convention*}{Convention}
\def\squareforqed{\hbox{\rlap{$\sqcap$}$\sqcup$}}
\def\qed{\ifmmode\squareforqed\else{\unskip\nobreak\hfil
\penalty50\hskip1em\null\nobreak\hfil\squareforqed
\parfillskip=0pt\finalhyphendemerits=0\endgraf}\fi}
\newlength{\tablength}
\newlength{\spacelength}
\newcommand{\tabstar}{\hspace*{\tablength}}
\newcommand{\spacestar}{\hspace*{\spacelength}}
\def\obeytabs{\catcode`\^^I=\active}
{\obeytabs\global\let^^I=\tabstar}
{\obeyspaces\global\let =\spacestar}
\newenvironment{display}{\begingroup\obeylines\obeyspaces\obeytabs}{\endgroup}
\newenvironment{prog}{\begin{display}\parskip0pt\sf}{\end{display}}
\author{Geir Agnarsson}
\address{Department of Mathematical Sciences \\
  George Mason University \\ Fairfax, VA  22030}
\email{geir@math.gmu.edu}
\author{Jim Lawrence}
\address{Department of Mathematical Sciences \\
  George Mason University \\ Fairfax, VA  22030}
\email{lawrence@gmu.edu}
\title{Power-closed ideals of\\ polynomial and Laurent polynomial rings}
\subjclass[2010]{13B25, 13C05, 52B11}
\keywords{
  Polynomial ring,
  Laurent polynomial ring, 
  power-closed ideal,
  closure operator,
  interior operator.}
\date{\today}
\begin{document}

\begin{abstract}
We investigate the structure of power-closed ideals of the
complex polynomial ring $R = \comps[x_1,\ldots,x_d]$
and the Laurent polynomial ring 
$R^{\pm} = \comps[x_1,\ldots,x_d]^{\pm} = M^{-1}\comps[x_1,\ldots,x_d]$, where
$M$ is the multiplicative sub-monoid $M = [x_1,\ldots,x_d]$ of $R$.
Here, an ideal $I$ is {\em power-closed}
if $f(x_1,\ldots,x_d)\in I$ implies $f(x_1^i,\ldots,x_d^i)\in I$
for each natural $i$. In particular, we investigate related closure
and interior operators on the set of ideals of $R$ and $R^{\pm}$.
Finally, we give a complete description of principal power-closed
ideals and of the radicals of general power-closed
ideals of $R$ and $R^{\pm}$.
\end{abstract}
\maketitle

\section{Introduction}
\label{sec:intro}

The original motivation for the questions studied in this paper came
largely from
the theory of convex sets and in particular from results concerning
Minkowski addition of convex sets.
Let ${\cal S}$ denote the additive group of functions on a Euclidean
space that is generated by the indicator functions $[P]$ of convex
polytopes $P$.
If $P$ and $Q$ are nonempty convex polytopes then their
{\it Minkowski sum} is the set $P+Q = \{p+q : p\in P, q\in Q\}$,
which is also a convex polytope.
A well-defined product, $\cdot$, on ${\cal S}$ is uniquely determined by the
condition that, for nonempty convex polytopes $P$ and $Q$,
$[P]\cdot[Q] = [P+Q]$.
With this product, ${\cal S}$ is a commutative ring.
Its multiplicative identity is the indicator function of the singleton set
$\{0\}$.
The ring structure on ${\cal S}$ has been considered by several authors,
e.~g., \cite{Groemer, McMullen, Lawrence, Jay-Klaus, Morelli}.

In~\cite{Lawrence, Jay-Klaus},
${\cal S}$  and its subrings that are generated by
indicator functions of convex polytopes are called {\it Minkowski rings}.
(The paper~\cite{Lawrence} is unpublished; however, preprints were
rather widely
disseminated, and~\cite{Jay-Klaus} was based upon it.)
These rings possess a rather striking property.
For each $k\in \nats$ and for each convex polytope $P$, the power $[P]^k$
in the Minkowski ring is simply $[kP]$, the indicator function of
the dilation of $P$ by the factor $k$.
The function that takes $[P]$ to $[kP]$ extends to a homomorphism,
$\psi_k:{\cal S}\rightarrow {\cal S}$.  Taking that as a cue, we consider
rings $S$, commutative rings with identity, that mimic the Minkowski
rings in the following way: The ring $S$ possesses a distinguished
subset $G$ that generates $S$ as a ring, together with operators
$\psi_k:S\rightarrow S$ for $k\in \nats$  such that each operator is
a homomorphism of $S$ to itself and such that, for each $g\in G$,
$\psi_k(g) = g^k$. (Notice that $G$ can be taken to be closed
under multiplication if so desired.)
We dub these {\it Adams-Minkowski rings}.

A reader familiar with them may have already recognized that these rings
are related to $\lambda$-rings, a notion that
was originally motivated by rings of vector bundles and operations on them,
i.e., $K$-theory.

Terminology
applying to $\lambda$-rings varies; we use the second section
of~\cite{Atiyah-Tall}. (For much more on the topic of $\lambda$-rings,
 see the book,~\cite{Yau}.)
Special $\lambda$-rings which are generated by elements of dimension
at most 1 are Adams-Minkowski rings, where the set of elements of degree at
most 1 can be taken as the distinguished generating
set and the operators $\psi_k$ are the Adams operations (whose presence
prompted the ``Adams'' in the name).
(Special $\lambda$-rings are sometimes called $\lambda$-rings, in which
case, usually, $\lambda$-rings are called pre-$\lambda$-rings.)
Conversely, Adams-Minkowski rings in which division by natural numbers
is possible are special $\lambda$-rings which are generated by elements
of dimension at most 1 and for which the Adams operations are the
homomorphisms $\psi_k$.
As shown in~\cite{Atiyah-Tall}, any special
$\lambda$-ring may be obtained as a subring of
such a special $\lambda$-ring,
closed under the operations; indeed the special $\lambda$-rings may be
characterized as such subrings.

Although many results here will apply to Adams-Minkowski rings in
general, or to large subclasses,
the scope in this paper will be considerably reduced
in order to accommodate its most acute results.
First, attention is restricted to Noetherian Adams-Minkowski rings
$S$.  The polynomial ring  $\ints[x_1, \ldots, x_d]$ is such a ring, with $G$
taken to be $G = \{1, x_1,\ldots,x_d\}$;
and, of course, all other Noetherian Adams-Minkowski rings
can be obtained as homomorphic images of such
polynomial rings.  We are interested in the ideals that are the kernels of
certain of these homomorphisms, and it will be useful to consider
their analogues
in the ring $R = \comps[x_1,\ldots,x_d]$ of polynomials with complex
coefficients, instead.  Also, we will address their analogues in the
rings of Laurent polynomials having complex coefficients.

\section{Definitions and observations}
\label{sec:defs-obs}

Although some results will be valid in Adams-Minkowski
rings more generally, we will consider
the polynomial ring $R = \comps[x_1,\ldots,x_d]$ over the complex
field $\comps$ in $d$ variables.

For a polynomial $f = f(\tilde{x}) = f(x_1,\ldots,x_d)\in R$ let 
$f^{(i)}(\tilde{x}) = f(x_1^i,\ldots,x_d^i)$.
\begin{definition}
\label{def:closed}
An ideal $I$ of $R = \comps[x_1,\ldots,x_d]$ is 
{\em power-closed} if $f\in I$ implies
$f^{(i)}\in I$ for each $i\in\nats$.
\end{definition}

Notice that $R$ itself is power-closed, and the intersection of an
arbitrary family of power-closed ideals is power-closed.  Therefore
the function that takes an arbitrary subset $S$ of $R$ to the 
smallest power-closed ideal containing it (i.e., the intersection
of the collection of power-closed ideals containing it) is a
closure operator on $R$: It is enlarging, monotone, and idempotent.
For a given $S$,
this ideal will be denoted by $S^{(*)}$.  It follows that the collection
of power-closed
ideals forms a complete lattice in which the meet operation is intersection.
This is of course also the meet operation in the lattice of all ideals of
$R$, and it will be seen shortly that the join operation in the lattice
of power-closed ideals is also the join operation (namely, sum) in the
lattice of ideals.  It will follow
that the lattice of power-closed ideals is a complete sublattice of
the lattice of ideals.

Many types of ideals are power-closed ideals, as the following examples show.
(i) Each monomial ideal generated by a finite set of monomials
from the monoid $[x_1,\ldots,x_d]$ is a power-closed ideal.
(ii)  Each ideal of $R$ that is generated by a finite set of binomials
each of which is a difference of two monomials is a power-closed ideal.  
(iii) In particular each toric ideal,
being a prime ideal generated by binomials, is a power-closed ideal.
(iv) More generally, every ideal generated by polynomials each being
a product of monomials and binomials from 
$R = \comps[x_1,\ldots,x_d]$ is a power-closed ideal. 
We will see in the next section that there are power-closed ideals of 
$R$ that are not of the most general type (iv) mentioned here above.

For a subset $S\subseteq R$ we can also consider the ideal
$S^{(*)}$ of $R$ constructed ``from below''. 
For a subset $S\subseteq R$ and $i\geq 1$ let
$S^{(i)} : = ( f^{(\ell)} : f\in S, 1\leq\ell\leq i)$.
In this way we have a nested family of ideals containing $S$; starting
with the set $S$:
\[
S\subseteq S^{(1)}\subseteq S^{(2)}\subseteq\cdots\subseteq
S^{(i)}\subseteq\cdots.
\]
The union is an ideal $S^{(\star)} = \bigcup_{i\geq 0} S^{(i)}$
of $R$ that contains $S$.
An element $g$ of $R$ is in $S^{(\star)}$ if and only if
$g = h_1f_1^{(\ell_1)} + \cdots + h_kf_k^{(\ell_k)}$
for some $h_1,\ldots,h_k\in R$, $f_1,\ldots,f_k\in S$ and positive integers
$\ell_1,\ldots,\ell_k$. For each $i\geq 1$ we have
$g^{(i)} = h_1^{(i)}f_1^{(i\ell_1)} + \cdots
+ h_k^{(i)}f_k^{(i\ell_k)}\in S^{(\star)}$;
it follows that  $S^{(\star)}$ is a power-closed ideal. Clearly, any
power-closed ideal that contains $S$ must contain $S^{(\star)}$ and
hence $S^{(\star)} = S^{(*)}$ from above.

Since $R$ is Noetherian, every ideal $I = (S)$ is finitely generated,
so $S$ contains a finite subset $S'\subseteq S$ with $I = (S')$.
Consequentially, there is an $N\in\nats$ with $I^{(*)} = I^{(N)}$.
In a similar vein as the above, we can show that $I^{(*)}$ can be written
in terms of any set of generators for $I$. We have the following.
\begin{claim}
\label{clm:*-gen}
For an ideal $I$ of $R$ we have that $I^{(*)}$ is a power-closed ideal
and contains
$I$. Further, if $I = (f_1,\ldots,f_n)\in R$ then for some $N\in\nats$
\[
I^{(*)} = I^{(N)} = (
f_1,f_1^{(2)},\ldots,f_1^{(N)},f_2,f_2^{(2)},\ldots,f_2^{(N)}, \ldots,
f_n,f_n^{(2)},\ldots,f_n^{(N)}).
\]
\end{claim}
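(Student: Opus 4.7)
The plan is to address the two assertions of the claim in turn. The first, that $I^{(*)}$ is a power-closed ideal containing $I$, is essentially already in hand from the discussion preceding the claim: for any subset $S \subseteq R$ it was shown that $S^{(\star)} = \bigcup_{i \geq 0} S^{(i)}$ is a power-closed ideal containing $S$, and that any power-closed ideal containing $S$ must also contain $S^{(\star)}$, so $S^{(\star)} = S^{(*)}$. Specializing to $S = I$ delivers the first sentence at no further cost, so the work lies entirely in producing the explicit finite-generator expression for $I^{(*)}$.

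For that, my plan is first to prove, for each fixed $i \geq 1$, the auxiliary identity
\[
I^{(i)} = (f_j^{(\ell)} : 1 \leq j \leq n,\ 1 \leq \ell \leq i),
\]
and then to invoke the Noetherian property to terminate the chain. The identity rests on the basic observation that, for each $\ell \geq 1$, the substitution map $\pi_\ell : R \to R$ sending $g(\tilde{x})$ to $g^{(\ell)}(\tilde{x}) = g(x_1^\ell,\ldots,x_d^\ell)$ is a ring endomorphism. Given any $f \in I$, write $f = \sum_{j=1}^n h_j f_j$ with $h_j \in R$; applying $\pi_\ell$ yields $f^{(\ell)} = \sum_{j=1}^n h_j^{(\ell)} f_j^{(\ell)}$, which lies in the ideal generated by the $f_j^{(\ell)}$ ($1\leq j\leq n$). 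This gives the non-obvious inclusion in the identity above; the reverse inclusion is immediate because each $f_j$ lies in $I$.

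Finally, since $R = \comps[x_1,\ldots,x_d]$ is Noetherian, the ascending chain $I^{(1)} \subseteq I^{(2)} \subseteq \cdots$ must stabilize at some index $N \in \nats$. Consequently
\[
I^{(*)} \;=\; \bigcup_{i\geq 1} I^{(i)} \;=\; I^{(N)} \;=\; (f_j^{(\ell)} : 1 \leq j \leq n,\ 1 \leq \ell \leq N),
\]
which is the asserted finite presentation. There is no real obstacle in this argument; the whole proof is a short marriage of the ring-homomorphism property of substitution with the Hilbert basis theorem, both of which are already implicit in the text preceding the claim.
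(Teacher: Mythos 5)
Your proof is correct and takes essentially the same route as the paper, which only sketches the argument (``In a similar vein as the above, we can show that $I^{(*)}$ can be written in terms of any set of generators for $I$'') and leaves the details implicit. You have filled in exactly the intended details: the identity $I^{(i)} = (f_j^{(\ell)} : 1 \le j \le n,\ 1 \le \ell \le i)$ via the ring-endomorphism property of $g \mapsto g^{(\ell)}$, followed by stabilization of the ascending chain by the Hilbert basis theorem.
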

As a consequence, for any $f_1, \ldots, f_n\in R$, the ideal
$\sum_i(f_i)^{(*)}$ is power-closed, and every power-closed ideal is of this
form.   The sum of any collection of power-closed ideals is again
power-closed.  Notice that this implies that the join operation in the lattice
of power-closed ideals is sum.

From the above discussion and Claim~\ref{clm:*-gen}
we see that for any ideal $I$ we can obtain the {\em power-closure}
$I^{(*)}$ of $I$ directly
from the generators of $I$.
As mentioned earlier in this section $S\mapsto S^{(*)}$ is a closure
operator on subsets of $R$ and therefore also on the set of ideals
of $R$. That $I\mapsto I^{(*)}$ is a closure operator for ideals $I$ of $R$
follows from the power-closedness of $I^{(*)}$; that is,
$I^{(**)} := (I^{(*)})^{(*)} = I^{(*)}$. We summarize in the following
phrased in terms of subsets $S\subseteq R$, but it also holds when
restricting to ideals of $R$.
\begin{proposition}
\label{prp:closure}
For each set $S\subseteq R$ the set $S^{(*)}$ is a power-closed ideal and the
operation $S\mapsto S^{(*)}$ is a closure operator:
(i) $S\subseteq S^{(*)}$ for each subset $S$;
(ii)~$S\subseteq T$ implies $S^{(*)}\subseteq T^{(*)}$; and 
(iii) $S^{(**)} := (S^{(*)})^{(*)} = S^{(*)}$ for each $S$.
The set of power-closed ideals is a complete sublattice of the (complete)
lattice of ideals of $R$:  If $\cal S$ is a family of power-closed ideals
of $R$ then both $\bigcap_{S\in {\cal S}} S$ and
$\sum_{S\in {\cal S}} S$ are power-closed ideals.
\end{proposition}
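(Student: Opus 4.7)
The plan is to observe that nearly all of the content of Proposition~\ref{prp:closure} has already been established in the preceding discussion, so the proof reduces to tying together a handful of short arguments. The essential facts to exploit are: first, that $R$ itself is power-closed; second, that the substitution $f \mapsto f^{(i)}$ is a ring homomorphism of $R$ to itself, so in particular additive and $R$-linear in a suitable sense; and third, the explicit description of $S^{(*)}$ from below via the nested chain $S \subseteq S^{(1)} \subseteq S^{(2)} \subseteq \cdots$ with union $S^{(\star)} = S^{(*)}$.

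First I would verify the three closure-operator properties. Property (i), $S\subseteq S^{(*)}$, is immediate from the construction. For property (ii), if $S\subseteq T$, then $T^{(*)}$ is a power-closed ideal containing $T$, hence containing $S$; thus $T^{(*)}$ appears in the intersection defining $S^{(*)}$, giving $S^{(*)}\subseteq T^{(*)}$. For property (iii), $S^{(*)}$ is itself power-closed (shown in the paragraph preceding the proposition via the identity $g^{(i)} = h_1^{(i)}f_1^{(i\ell_1)} + \cdots + h_k^{(i)}f_k^{(i\ell_k)}$), so it is the smallest power-closed ideal containing $S^{(*)}$, which yields $S^{(**)} = S^{(*)}$.

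Next I would handle the sublattice claim. For arbitrary intersections, if $f \in \bigcap_{S\in\mathcal S} S$ then $f\in S$ for each $S\in\mathcal S$, and since each $S$ is power-closed, $f^{(i)}\in S$ for every $S$ and every $i\in\nats$; hence $f^{(i)}\in \bigcap_{S\in\mathcal S} S$. For sums, one uses that $f \mapsto f^{(i)}$ is a ring homomorphism, so it is additive: any $g\in\sum_{S\in\mathcal S} S$ is a finite sum $g = g_1 + \cdots + g_k$ with $g_j\in S_j \in \mathcal S$, and then $g^{(i)} = g_1^{(i)} + \cdots + g_k^{(i)}$ with each summand $g_j^{(i)}\in S_j$ by power-closedness of $S_j$; thus $g^{(i)}\in \sum_{S\in \mathcal S} S$.

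There is no real obstacle here: once one notices the additivity of $f\mapsto f^{(i)}$ and combines it with the observation that $R$ itself is power-closed, every claim in the proposition becomes a one-line verification. The only mild point worth stating explicitly in the write-up is that the join (sum) in the lattice of power-closed ideals coincides with the sum in the ambient lattice of ideals, which is exactly what the closure-under-sum argument establishes and which is what makes the inclusion into the ideal lattice a \emph{complete sublattice} embedding rather than merely a complete lattice in its own right.
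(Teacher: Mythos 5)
Your proof is correct and follows essentially the same route the paper takes: the three closure-operator properties fall out of the ``intersection of all power-closed ideals containing $S$'' characterization together with the fact (established from the ``from below'' construction) that $S^{(*)}$ is itself a power-closed ideal, and closure of the family under intersections and sums is handled element-wise using that $f\mapsto f^{(i)}$ is additive. The paper deduces closure under sums via Claim~\ref{clm:*-gen}, whereas you argue it directly from additivity, but this is a cosmetic difference rather than a genuinely different approach.
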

For any subsets $S,T\subseteq R$ one can always define the sum
$S + T = \{f + g : f\in S,\ \ g\in T\}$ since $R$ is a ring and closed
under addition. By the above Proposition~\ref{prp:closure}
we have $S^{(*)} + T^{(*)}\subseteq (S + T)^{(*)}$ for any subsets
$S,T\in R$ and generally also
$\sum_{S\in {\cal S}}S^{(*)} \subseteq \left(\sum_{S\in {\cal S}} S\right)^{(*)}$
for any family ${\cal S}$ of subsets of $R$.

Conversely, since for any $f,g\in R$ we have $(f+g)^{(i)} = f^{(i)} + g^{(i)}$
we have $(S + T)^{(*)} \subseteq S^{(*)} + T^{(*)}$ for any subsets
$S,T\subseteq R$ and therefore the following.
\begin{observation}
\label{obs:closure-sum}
The closure operator $S\mapsto S^{(*)}$ commutes with the sum of sets
in $R$; that is 
$\left(\sum_{S\in {\cal S}} S\right)^{(*)} = \sum_{S\in {\cal S}}S^{(*)}$
for any family ${\cal S}$ of subsets of $R$.
\end{observation}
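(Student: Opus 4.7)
The plan is to prove the two inclusions separately, in each case extending the two-set argument sketched in the paragraph preceding the statement to an arbitrary family $\mathcal{S}$. Everything rests on the single identity $(f+g)^{(i)}=f^{(i)}+g^{(i)}$, i.e., on the fact that, for each $i\in\nats$, the substitution $f(\tilde{x})\mapsto f(x_1^i,\ldots,x_d^i)$ is a ring homomorphism $R\to R$.

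For the inclusion $\sum_{S\in\mathcal{S}} S^{(*)} \subseteq \left(\sum_{S\in\mathcal{S}} S\right)^{(*)}$ I would invoke monotonicity from Proposition~\ref{prp:closure}(ii): each individual $T\in\mathcal{S}$ sits inside $\sum_{S\in\mathcal{S}} S$, and so $T^{(*)}\subseteq\left(\sum_{S\in\mathcal{S}} S\right)^{(*)}$; since the right-hand side is an ideal, it is closed under finite addition and therefore contains $\sum_{T\in\mathcal{S}} T^{(*)}$.

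For the reverse inclusion I would show that $J:=\sum_{S\in\mathcal{S}}S^{(*)}$ is itself a power-closed ideal containing $\sum_{S\in\mathcal{S}} S$, and then apply the universal property of $(\cdot)^{(*)}$ to conclude that $\left(\sum_{S\in\mathcal{S}} S\right)^{(*)}\subseteq J$. The ideal property is automatic as a sum of ideals; the containment follows from $S\subseteq S^{(*)}$; and power-closedness is exactly where the additive identity gets used, since a typical element $f_1+\cdots+f_k$ of $J$, with $f_j\in S_j^{(*)}$, satisfies $(f_1+\cdots+f_k)^{(i)}=f_1^{(i)}+\cdots+f_k^{(i)}\in J$, each summand lying in its respective $S_j^{(*)}$ because the latter is itself power-closed.

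I do not anticipate a real obstacle here: the argument is essentially formal, and all of the underlying ideas appear already in the paragraph immediately preceding the statement. The only point that demands a little care is interpreting $\sum_{S\in\mathcal{S}} S$ as the usual ideal-theoretic sum of finitely supported tuples, so that in particular the inclusion $T\subseteq\sum_{S\in\mathcal{S}} S$ used in the first inclusion is indeed valid.
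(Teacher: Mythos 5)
Your proof is correct and follows essentially the same route as the paper: the inclusion $\sum_{S}S^{(*)}\subseteq\left(\sum_{S}S\right)^{(*)}$ via monotonicity and the ideal property of the right-hand side, and the reverse inclusion by observing that $\sum_{S}S^{(*)}$ is already a power-closed ideal containing $\sum_{S}S$ (using $(f_1+\cdots+f_k)^{(i)}=f_1^{(i)}+\cdots+f_k^{(i)}$) and appealing to minimality of the closure. Your remark about reading $\sum_{S\in\mathcal{S}}S$ as the ideal-theoretic sum of finitely supported tuples correctly isolates the one notational point that needs care.
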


As with the sum we also have $(fg)^{(i)} = f^{(i)}g^{(i)}$ for any $f,g\in R$.
Suppose both $I$ and $J$ are power-closed ideals and consider the
power-closure $(IJ)^{(*)}$ of the product ideal $IJ$. If $F\in (IJ)^{(*)}$,
then there are $f_i\in I$ and $g_i\in J$ such that
$F = \sum_{i}(f_ig_i)^{(k_i)}$. (Note that in this sum, some elements
$f_i$ and $g_i$ might be repeated with different $k_i$.)
Since $I$ and $J$ are power-closed we get
\[
F = \sum_{i}(f_ig_i)^{(k_i)} = \sum_{i}f_i^{(k_i)}g_i^{(k_i)} \in IJ.
\]
\begin{observation}
\label{obs:IJ-p-closed}
For power-closed ideals $I$ and $J$ of $R$ we have that
$IJ$ is also power-closed. In particular, for any $n\in\nats$
the ideal $I^n$ is also power-closed.
\end{observation}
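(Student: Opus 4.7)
The plan is to prove directly, from the definition of power-closedness, that an arbitrary element of $IJ$ has all of its power-images $h^{(i)}$ inside $IJ$, using only the multiplicativity identity $(fg)^{(i)} = f^{(i)} g^{(i)}$ together with the hypothesis that $I$ and $J$ are individually power-closed. The ``in particular'' statement about $I^n$ will then drop out immediately by induction.

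First I would take an arbitrary $h \in IJ$ and write it as a finite sum $h = \sum_j f_j g_j$ with $f_j \in I$ and $g_j \in J$ (this is the definition of the product ideal). Fixing $i \in \nats$, I would then apply the substitution $x_\ell \mapsto x_\ell^i$, which is a ring homomorphism from $R$ to itself; this is exactly the content of the identity $(fg)^{(i)} = f^{(i)} g^{(i)}$ (and of $(f + g)^{(i)} = f^{(i)} + g^{(i)}$, which was noted earlier in the section). Applying this termwise gives
\[
h^{(i)} = \sum_j (f_j g_j)^{(i)} = \sum_j f_j^{(i)} g_j^{(i)}.
\]
Since $I$ is power-closed and $f_j \in I$, we have $f_j^{(i)} \in I$; likewise $g_j^{(i)} \in J$. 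Hence each summand $f_j^{(i)} g_j^{(i)}$ lies in $IJ$, so $h^{(i)} \in IJ$. This shows $IJ$ is power-closed.

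For the second assertion, I would argue by induction on $n$. The base case $n = 1$ is the hypothesis that $I$ is power-closed. For the inductive step, assuming $I^n$ is power-closed, we view $I^{n+1} = I^n \cdot I$ as a product of two power-closed ideals, to which the first part of the observation applies, giving that $I^{n+1}$ is power-closed.

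There is no real obstacle here; the entire argument hinges on the compatibility of the operations $f \mapsto f^{(i)}$ with both addition and multiplication in $R$, which was already recorded in the lines preceding the statement. The one thing to be careful about is simply the bookkeeping when writing $h \in IJ$ as a sum $\sum_j f_j g_j$: the representation is not unique, but any representation suffices for the computation above, which is why the argument goes through without needing to invoke Claim~\ref{clm:*-gen} or Observation~\ref{obs:closure-sum}.
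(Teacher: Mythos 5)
Your proof is correct and takes essentially the same approach as the paper: both rely on writing an element of $IJ$ as a finite sum of products $f_jg_j$ and applying the ring homomorphism $x_\ell\mapsto x_\ell^i$ termwise. The only cosmetic difference is that you verify the defining property of power-closedness for $IJ$ directly, whereas the paper phrases the argument as showing $(IJ)^{(*)}\subseteq IJ$; these are the same computation.
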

{\sc Remark:} For ideals $I$ and $J$ of $R$ we clearly have the
containment $(I\cap J)^{(*)} \subseteq I^{(*)}\cap J^{(*)}$.
However, we will see in Observation~\ref{obs:cap-sum} in
the next Section~\ref{sec:explicit} that
unlike the sum, the intersection does not commute with power-closure.
That is, we do not have the equality $(I\cap J)^{(*)} = I^{(*)}\cap J^{(*)}$
in $R$ in general.

The collection of ideals of $R$, partially ordered by inclusion,
forms a complete lattice in which the meet operation is intersection and the
join operation is sum.
We conclude this section with a few semi-dual observations about
power-closed ideals in this lattice.

For a given ideal $I$ of $R$, let
\[
{\cal{MC}}(I) = \{J\subseteq I : \mbox{$J$ is a power-closed ideal of $R$}\}.\]
 If 
\begin{equation}
\label{eqn:MC-above}  
I^{{(\circ)}} := \sum_{J\in{\cal{MC}}(I)}J
\end{equation}
we see that $I^{{(\circ)}}$ is the the unique maximum power-closed
ideal contained in
$I$. Call $I^{{(\circ)}}$ the {\em power-interior} of the ideal $I$.

The power-interior ideal can also be given constructively ``from below'':
If $f_1^{(i)},\ldots,f_n^{(i)}\in I$ for each $i\in\nats$,
then by analogous argument to prove Claim~\ref{clm:*-gen} we have that 
$(f_1,\ldots,f_n)^{(*)}\subseteq I$. Considering a general
element of the power-interior of $I$ as defined in (\ref{eqn:MC-above})
we have the following.
\begin{observation}
\label{obs:MC-below}
For an ideal $I$ of $R$, the power-interior ideal $I^{{(\circ)}}$ of $I$
as defined in (\ref{eqn:MC-above}) can be given as
$I^{{(\circ)}} = \{f\in I : f^{(i)}\in I \mbox{ for each $i$ }\}$.
\end{observation}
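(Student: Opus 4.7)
The plan is to prove the equality $I^{(\circ)} = T$, where $T := \{f \in I : f^{(i)} \in I \text{ for each } i\in\nats\}$, by establishing both inclusions.

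For $I^{(\circ)} \subseteq T$, I would simply unpack the definition (\ref{eqn:MC-above}): any $f \in I^{(\circ)}$ lies in some $J\in {\cal{MC}}(I)$, and since $J$ is power-closed we have $f^{(i)}\in J\subseteq I$ for every $i\in\nats$; combined with $f\in I$, this places $f$ in $T$.

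The substantive direction is $T\subseteq I^{(\circ)}$. By the definition of $I^{(\circ)}$ as the sum (equivalently, supremum) of all power-closed subideals of $I$, it suffices to show that $T$ is itself a power-closed ideal contained in $I$. Containment in $I$ is immediate. To see that $T$ is an ideal, I would use that each map $f\mapsto f^{(i)}$ is a ring endomorphism of $R$: given $f,g\in T$ and $h\in R$, one has $(f+g)^{(i)} = f^{(i)}+g^{(i)}\in I$ and $(hf)^{(i)} = h^{(i)} f^{(i)}\in I$, while $f+g,hf\in I$, so $f+g,hf\in T$. Power-closedness of $T$ reduces to the composition identity $(f^{(i)})^{(j)} = f^{(ij)}$: for $f\in T$ and any $j$, this gives $(f^{(i)})^{(j)} = f^{(ij)}\in I$, and $f^{(i)}\in I$ as well, so $f^{(i)}\in T$.

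This is essentially a routine verification; the only ingredients are the three algebraic identities $(f+g)^{(i)} = f^{(i)}+g^{(i)}$, $(hf)^{(i)} = h^{(i)}f^{(i)}$, and $(f^{(i)})^{(j)} = f^{(ij)}$, all of which are immediate from the definition $f^{(i)}(\tilde{x}) = f(x_1^i,\ldots,x_d^i)$. There is no real obstacle; the observation amounts to recording that the operators $f\mapsto f^{(i)}$ form a monoid of ring endomorphisms of $R$ under composition, so the set of elements of $I$ that are preserved by all of them is automatically an ideal stable under each of them.
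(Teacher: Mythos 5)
Your proof is correct and follows essentially the same approach as the paper: both directions rest on the fact that each $f\mapsto f^{(i)}$ is a ring endomorphism and on the observation that elements of $T=\{f\in I: f^{(i)}\in I\ \forall i\}$ generate power-closed ideals inside $I$. The one cosmetic difference is that you verify directly that $T$ itself is a power-closed ideal contained in $I$ and invoke maximality of $I^{(\circ)}$, whereas the paper instead notes (by the argument of Claim~\ref{clm:*-gen}) that for each $f\in T$ the ideal $(f)^{(*)}$ lies in $I$ and hence in $I^{(\circ)}$; the two are interchangeable.
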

Note that since $R$ is Noetherian
$I^{{(\circ)}} = (f_1,\ldots,f_n)$ for finitely many elements
$f_1,\ldots,f_n\in I$ satisfying
$f_1^{(i)},\ldots,f_n^{(i)}\in I$ for each $i\in\nats$.

By Observation~\ref{obs:MC-below} we have for every element $\overline{f}$
in the quotient ring $R/I^{{(\circ)}}$ that $\overline{f} = 0$ implies that
$\overline{f^{(i)}} = 0$ for every $i\in\nats$. The ring $R/I^{{(\circ)}}$ is
the universal quotient ring of $R$ with this property.
\begin{proposition}
\label{prp:universal}
For an ideal $I$ of $R$, every ring homomorphism $\phi : R\rightarrow R/I$
such that $\phi(f) = 0$ implies $\phi(f^{(i)}) = 0$ for each $i\in\nats$
factors through $R/I^{{(\circ)}}$. That is, there is a unique homomorphism
$\overline{\phi} : R/I^{{(\circ)}}\rightarrow R/I$ such that
$\phi = \overline{\phi}\circ\pi$, where
$\pi : R\twoheadrightarrow R/I^{{(\circ)}}$
is the natural surjection.
\end{proposition}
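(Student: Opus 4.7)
The plan is to reduce the proposition to the universal property of the quotient map $\pi \colon R \twoheadrightarrow R/I^{(\circ)}$: for any ring homomorphism $\psi$ out of $R$ with $\ker\psi \supseteq I^{(\circ)}$, there exists a unique factorization through $\pi$. The substantive step is therefore to verify the inclusion $I^{(\circ)} \subseteq \ker\phi$; once that is established, both existence and uniqueness of $\bar\phi$ are immediate, the latter from the surjectivity of $\pi$.

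To verify the inclusion, I would combine two observations. First, the hypothesis on $\phi$, namely $\phi(f) = 0 \Rightarrow \phi(f^{(i)}) = 0$ for each $i \in \nats$, is exactly the assertion that $\ker \phi$ is a power-closed ideal of $R$. Second, reading $\phi \colon R \to R/I$ as a ring homomorphism into the quotient $R/I$ via the canonical surjection, one has $I \subseteq \ker\phi$. Combined with the inclusion $I^{(\circ)} \subseteq I$, which is immediate from Observation~\ref{obs:MC-below}'s characterization $I^{(\circ)} = \{\, f \in I : f^{(i)} \in I \text{ for all } i\in\nats\,\}$, one obtains the chain $I^{(\circ)} \subseteq I \subseteq \ker\phi$ as required.

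With this chain in hand, the induced map $\bar\phi$ is defined on classes by $\bar\phi(r + I^{(\circ)}) := \phi(r)$; it is well-defined exactly because $I^{(\circ)} \subseteq \ker\phi$, is a ring homomorphism because $\phi$ is, and is unique because $\pi$ is surjective (so that $\bar\phi$ is determined by its composition with $\pi$). I do not anticipate any real obstacle in carrying this out: the proposition is essentially a repackaging of the universal property of the quotient ring, made available by the structural fact that $I^{(\circ)}$ sits inside $I$ and is itself power-closed.
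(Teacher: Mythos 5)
Your argument hinges on step~2, the claim that $I \subseteq \ker\phi$, and this is where the gap is. Having codomain $R/I$ does not force $I$ to lie in the kernel of $\phi$; the phrase ``reading $\phi$ as a ring homomorphism into the quotient $R/I$ via the canonical surjection'' would only establish $I \subseteq \ker\phi$ if $\phi$ were assumed to \emph{be} the canonical surjection (or to factor through it), which the proposition's hypotheses do not say. To see that the issue is genuine, take $R=\comps[x]$, $I=(x)$, so $R/I\cong\comps$, and let $\phi$ be the homomorphism $f\mapsto f(1)+(x)$. Then $\phi(f)=0$ iff $f(1)=0$, and since $f^{(i)}(1)=f(1)$ this $\phi$ does satisfy the stated power-compatibility hypothesis; its kernel $(x-1)$ is indeed power-closed as you argue in step~1. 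But $I^{(\circ)}=I=(x)\not\subseteq(x-1)$, so $\phi$ does \emph{not} factor through $R/I^{(\circ)}$. In other words, the hypothesis that $\ker\phi$ is power-closed (your step~1, which is correct) is not enough to conclude $I^{(\circ)}\subseteq\ker\phi$, and the additional ingredient $I\subseteq\ker\phi$ has to be imported from somewhere.

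A secondary symptom of the same issue: your finished proof never actually uses the power-closedness of $\ker\phi$. You establish $I^{(\circ)}\subseteq I\subseteq\ker\phi$ and invoke the universal property of the quotient; the power-compatibility hypothesis on $\phi$ plays no role. When the stated hypothesis is not consumed by the argument, that is usually a sign either that an unstated assumption ($I\subseteq\ker\phi$) is being smuggled in, or that the statement being proved is being read in a way the authors did not intend. What you should do is make the assumption $I\subseteq\ker\phi$ explicit as part of your reading of the proposition (for instance, stipulating that $\phi$ is the canonical projection or a composite $R\twoheadrightarrow R/I\to R/I$), and then note frankly that under this reading the power hypothesis on $\phi$ is not needed for the factorization. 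As it stands, the proof proposal passes off the crucial inclusion $I\subseteq\ker\phi$ as if it were automatic, and it isn't.
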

The power-interior operation $I\mapsto I^{(\circ)}$ does satisfy many
of the usual interior operation for open sets in topology. It is routine
to verify that when we dually replace ``$(*)$'' with ``$(\circ)$'',
``$\subseteq$'' with  ``$\supseteq$'', and ``$+$'' with ``$\cap$'' in 
Proposition~\ref{prp:closure},
then we have valid statements. We iterate this and summarize in the
following.
\begin{proposition}
\label{prp:replace}
For each ideal $I$ the ideal $I^{(\circ)}$ is power-closed and the
operation $I\mapsto I^{(\circ)}$ is an interior operator:
(i) $I\supseteq I^{(\circ)}$ for each ideal $I$;
(ii)~$I\supseteq J$ implies $I^{(\circ)}\supseteq J^{(\circ)}$ for ideals
$I$ and $J$ of $R$. 
(iii) We have the idempotent property
$I^{(\circ\circ)} := (I^{(\circ)})^{(\circ)} = I^{(\circ)}$ for each ideal $I$.
(iv) The interior operator $I\mapsto I^{(\circ)}$ commutes with the intersection
of ideals in $R$; that is
$\left(\bigcap_{I\in {\cal I}} I\right)^{(\circ)}
= \bigcap_{I\in {\cal I}}I^{(\circ)}$
for any family ${\cal I}$ of ideals of $R$.
\end{proposition}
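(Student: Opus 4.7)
The plan is to lean heavily on the explicit description from Observation~\ref{obs:MC-below}, namely
\[
I^{(\circ)} = \{f\in I : f^{(i)}\in I \text{ for all } i\in\nats\},
\]
together with the already-established fact (in Proposition~\ref{prp:closure}) that arbitrary intersections of power-closed ideals are power-closed. With these two tools, all four parts fall out quickly, so the proof should be short rather than computational.

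First I would dispatch (i) and (ii) in a sentence each: (i) is immediate from the definition, and for (ii), if $J\subseteq I$ and $f\in J^{(\circ)}$, then $f\in J\subseteq I$ and $f^{(i)}\in J\subseteq I$ for all $i$, so $f\in I^{(\circ)}$. For (iii), the key observation is that $I^{(\circ)}$ is itself power-closed (this was noted just before the statement of the proposition and is the whole point of defining it as the sum over ${\cal MC}(I)$); hence $I^{(\circ)}$ lies in ${\cal MC}(I^{(\circ)})$ and is obviously the maximum element there, so $(I^{(\circ)})^{(\circ)} = I^{(\circ)}$.

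The main content is part (iv), and this is the step I expect to require the most care, though even here the argument is a clean two-sided inclusion. For $\subseteq$: since $\bigcap_{I\in{\cal I}}I\subseteq I$ for each $I\in{\cal I}$, monotonicity (part (ii)) gives $(\bigcap_{I\in{\cal I}}I)^{(\circ)}\subseteq I^{(\circ)}$ for each $I$, whence $(\bigcap_{I\in{\cal I}}I)^{(\circ)}\subseteq\bigcap_{I\in{\cal I}}I^{(\circ)}$. For $\supseteq$: by Proposition~\ref{prp:closure}, $\bigcap_{I\in{\cal I}}I^{(\circ)}$ is a power-closed ideal, and since $I^{(\circ)}\subseteq I$ for each $I$, it is contained in $\bigcap_{I\in{\cal I}}I$. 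Thus it lies in ${\cal MC}(\bigcap_{I\in{\cal I}}I)$, and by the defining maximality of the power-interior, $\bigcap_{I\in{\cal I}}I^{(\circ)}\subseteq(\bigcap_{I\in{\cal I}}I)^{(\circ)}$.

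The main obstacle, if one can call it that, is purely conceptual rather than technical: one must keep straight the dual roles played by intersection in the lattice of power-closed ideals (where it coincides with the meet of ordinary ideals) versus sum in Observation~\ref{obs:closure-sum} for closure. Once one accepts that the power-interior is characterized equivalently as the largest power-closed ideal below $I$ and as $\{f\in I : f^{(i)}\in I\text{ for all }i\}$, each of (i)--(iv) is essentially a one-line verification, and no Noetherian hypothesis is needed for the statements themselves (it is only used implicitly for the finite-generation remark following Observation~\ref{obs:MC-below}).
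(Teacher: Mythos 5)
Your proof is correct and takes essentially the same approach the paper intends: the paper merely asserts that the verification is routine via the formal duality with Proposition~\ref{prp:closure}, and your argument is precisely that routine verification, using Observation~\ref{obs:MC-below} for (i)--(ii) and the maximality characterization of $I^{(\circ)}$ together with the closure of the family of power-closed ideals under arbitrary intersection for (iii)--(iv).
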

{\sc Remark:} For ideals $I$ and $J$ of $R$ we clearly have
$I^{(\circ)} + J^{(\circ)}\subseteq (I+J)^{(\circ)}$. However, we do not
have the other containment in general as we will see in the following example.
\begin{example}
\label{exa:IplusJ-circ}  
Consider $\alpha\in\comps$ such that
$\langle\alpha\rangle = \{\alpha^n : n\in\ints\}$ forms an infinite
multiplicative subgroup of ${\comps}^*$. If
$I = (x - \alpha)\subseteq\comps[x]$ and $J = (x + \alpha)\subseteq\comps[x]$,
then $I + J = \comps[x] = R$ and hence $(I+J)^{(\circ)} = \comps[x]$ as well.
By definition we have
\[
I^{(\circ)} = \{f \in I : f^{(i)}\in I\} = \{f\in\comps[x] :
f(\alpha^i) = 0 \mbox{ for all }i\in\nats\},
\]
and so $I^{(\circ)} = \{0\}$ since $\langle\alpha\rangle$ is infinite.
Similarly we have $J^{(\circ)} = \{0\}$ and hence
$I^{(\circ)} + J^{(\circ)} = \{0\} \neq \comps[x]$.
\end{example}
{\sc Remark:} For any ideal $I$ of $R$ we have that
$I^{{(\circ)}}\subseteq I\subseteq I^{(*)}$, so $I$ is always between two
power-closed ideals. The ideal $I$ is power-closed if and only
if $I^{{(\circ)}} = I^{(*)}$, in which case they both equal $I$.

As for the compositions of the power-closure and power-interior operations
we have the following directly by definitions.
\begin{observation}
\label{obs:close-int}  
For each ideal $I$ of $R$ we have
\[
I^{(\circ *)} := (I^{(\circ)})^{(*)} = I^{(\circ)}, \ \
I^{(* \circ)} := (I^{(*)})^{(\circ)} = I^{(*)}.
\]
\end{observation}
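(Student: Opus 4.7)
The plan is to reduce both identities to a single general principle: if $J$ is any power-closed ideal of $R$, then both $J^{(*)} = J$ and $J^{(\circ)} = J$. In the setting of this paper, the families of "closed" ideals for the closure operator $(*)$ and the "open" ideals for the interior operator $(\circ)$ coincide --- both are precisely the power-closed ideals --- so there are no nontrivial compositions to worry about, and everything collapses formally.

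First, I would establish this general fact. For $J$ power-closed, $J$ itself belongs to the collection of power-closed ideals containing $J$, so the smallest such ideal, $J^{(*)}$, satisfies $J^{(*)} \subseteq J$; combined with the enlarging property $J \subseteq J^{(*)}$ from Proposition~\ref{prp:closure}(i), this gives $J^{(*)} = J$. Dually, $J \in \mathcal{MC}(J)$, so from the definition of $J^{(\circ)}$ as $\sum_{K \in \mathcal{MC}(J)} K$ we get $J \subseteq J^{(\circ)}$, and combined with $J^{(\circ)} \subseteq J$ from Proposition~\ref{prp:replace}(i) this yields $J^{(\circ)} = J$.

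Next, I would apply this fact in the two relevant cases. The ideal $I^{(\circ)}$ is power-closed by Proposition~\ref{prp:replace} (it is the sum of power-closed ideals, and by Proposition~\ref{prp:closure} such a sum is power-closed), so taking $J = I^{(\circ)}$ gives $(I^{(\circ)})^{(*)} = I^{(\circ)}$. Likewise, $I^{(*)}$ is power-closed by Proposition~\ref{prp:closure}, so taking $J = I^{(*)}$ gives $(I^{(*)})^{(\circ)} = I^{(*)}$.

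There is essentially no obstacle here; the observation is a direct bookkeeping consequence of the fact that both $(*)$ and $(\circ)$ are idempotent and agree on their shared set of fixed points. What is perhaps worth highlighting in the write-up is the contrast with the topological analogy, where the interior of a closed set and the closure of an open set can differ from the set itself --- in this lattice-theoretic setting the two operators have the \emph{same} fixed points (the power-closed ideals), which is why the compositions collapse so cleanly.
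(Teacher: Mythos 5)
Your proof is correct and is exactly the unpacking of the paper's terse justification (``directly by definitions''): both operators fix precisely the power-closed ideals, $I^{(\circ)}$ and $I^{(*)}$ are power-closed, so both compositions collapse. No discrepancy to report.
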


\section{Some explicit results}
\label{sec:explicit}

As the title of this section indicates, we will delve into some refinements
of what what was observed in the previous section. In particular, we will
strengthen Claim~\ref{clm:*-gen} among other results. 

Recall the elementary symmetric functions of
$R = \comps[x_1,\ldots,x_d]$~\cite[p.~252]{Hungerford}:
\begin{eqnarray*}
\sigma_0(d) & = & 1, \\
\sigma_1(d) & = & \sum_{i=1}^dx_i, \\
\sigma_2(d) & = & \sum_{1\leq i<j\leq d}x_ix_j, \\
  &\vdots & \\
\sigma_k(d) & = & \sum_{1\leq i_1<\cdots<i_k\leq d}x_{i_1}\cdots x_{i_k},\\
  &\vdots & \\
\sigma_d(d) & = & x_1\cdots x_d.
\end{eqnarray*}
We also put $\sigma_i(d) = 0$ for $i>d$. Whenever
 $i, d \ge 1$ 
 we have the relation
$\sigma_i(d) = x_d\sigma_{i-1}(d-1) + \sigma_i(d-1)$.
Using these relations we get
\begin{eqnarray*}
\sum_{i=0}^{d-1}(-1)^ix_d^{d-i}\sigma_{i+1}(d) & = &
\sum_{i=0}^{d-1}(-1)^ix_d^{d-i}[x_d\sigma_i(d-1) + \sigma_{i+1}(d-1)] \\
& = & \sigma_0(d-1)x_d^{d+1} \\
& = & x_d^{d+1},
\end{eqnarray*}
and hence by symmetry we get for any $\ell\in\{1,\ldots,d\}$
that
\[
\sum_{i=0}^{d-1}(-1)^ix_{\ell}^{d-i}\sigma_{i+1}(d) = x_{\ell}^{d+1}
\]
and therefore the following.
\begin{claim}
\label{clm:x^N}
For any $\ell\in\{1,\ldots,d\}$ and $n\geq d+1$ we have
\[
x_{\ell}^n = \sum_{i=0}^{d-1}(-1)^ix_{\ell}^{n-i-1}\sigma_{i+1}(d).
\]
\end{claim}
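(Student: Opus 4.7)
The plan is to prove the claim by a straightforward induction on $n$, taking $n = d+1$ as the base case. That base case is precisely the identity $\sum_{i=0}^{d-1}(-1)^i x_{\ell}^{d-i}\sigma_{i+1}(d) = x_{\ell}^{d+1}$ which has already been verified in the paragraph immediately preceding the claim (explicitly for $\ell = d$, and then for all $\ell\in\{1,\ldots,d\}$ by the symmetry of the $\sigma_{i+1}(d)$'s in $x_1,\ldots,x_d$), so no further work is required there.

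For the inductive step, I would assume the identity $x_{\ell}^n = \sum_{i=0}^{d-1}(-1)^i x_{\ell}^{n-i-1}\sigma_{i+1}(d)$ holds for some $n\geq d+1$ and multiply both sides by $x_{\ell}$. The left-hand side becomes $x_{\ell}^{n+1}$, while each term on the right transforms as $(-1)^i x_{\ell}^{n-i-1}\sigma_{i+1}(d)\mapsto (-1)^i x_{\ell}^{n-i}\sigma_{i+1}(d) = (-1)^i x_{\ell}^{(n+1)-i-1}\sigma_{i+1}(d)$. This is exactly the asserted identity with $n$ replaced by $n+1$, so the induction closes.

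There is essentially no obstacle here: the substantive content is the Newton-style calculation performed just before the claim, which encodes the fact that each generator $x_{\ell}$ is a root of the characteristic polynomial $\prod_{j=1}^{d}(t - x_j)$. Once that base identity is in hand, the inductive step merely propagates it by multiplying through by $x_{\ell}$, which commutes past each $\sigma_{i+1}(d)$ and shifts the exponent of $x_{\ell}$ by one. If one wished to avoid induction entirely, one could equivalently note that the formula at level $n$ is obtained from the level-$(d+1)$ formula by multiplying by $x_{\ell}^{n-d-1}$; this is the same content, packaged differently.
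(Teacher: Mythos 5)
Your proof is correct and matches the paper's (largely implicit) argument: the paper verifies the base identity $\sum_{i=0}^{d-1}(-1)^i x_\ell^{d-i}\sigma_{i+1}(d)=x_\ell^{d+1}$ via the recursion $\sigma_i(d)=x_d\sigma_{i-1}(d-1)+\sigma_i(d-1)$ and symmetry, then simply asserts the claim for $n\ge d+1$, the unstated step being exactly the multiplication by $x_\ell^{n-d-1}$ that you identify. Your induction on $n$ is an equivalent packaging of that same step, as you yourself observe.
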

From this we obtain the following.
\begin{lemma}
\label{lmm:symm}
If $f = a_1x_1 + \cdots + a_dx_d\in R$ then for all $n\geq d+1$
we have $f^{(n)} \in (f^{(n-d)},\ldots,f^{(n-1)})$ and
therefore $f^{(n)} \in (f,f^{(2)},\ldots,f^{(d)})\subseteq R$.
In particular, $(f)^{(*)} = (f,f^{(2)},\ldots,f^{(d)})$.
\end{lemma}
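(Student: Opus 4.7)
The plan is to apply Claim~\ref{clm:x^N} termwise to the polynomial $f^{(n)} = a_1 x_1^n + \cdots + a_d x_d^n$ and to combine the resulting identities into a single recurrence. First I would observe that for each $n \ge d+1$ and each $\ell \in \{1,\ldots,d\}$, Claim~\ref{clm:x^N} gives
\[
x_\ell^n \;=\; \sum_{i=0}^{d-1}(-1)^i x_\ell^{n-i-1}\sigma_{i+1}(d),
\]
with coefficients $\sigma_{i+1}(d)$ that are independent of $\ell$. Multiplying by $a_\ell$ and summing over $\ell$, the symmetric functions factor outside the inner sum, yielding the key identity
\[
f^{(n)} \;=\; \sum_{i=0}^{d-1}(-1)^i \sigma_{i+1}(d)\, f^{(n-i-1)},
\]
which expresses $f^{(n)}$ as an $R$-linear combination of $f^{(n-d)},\ldots,f^{(n-1)}$ and so establishes the first containment in the statement.

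From here, the second containment $f^{(n)} \in (f, f^{(2)},\ldots, f^{(d)})$ follows by a straightforward induction on $n$. The base case $n = d+1$ is the identity above, whose summands $f^{(n-d)},\ldots,f^{(n-1)}$ are precisely $f, f^{(2)},\ldots,f^{(d)}$. For $n > d+1$, each of $f^{(n-d)},\ldots,f^{(n-1)}$ either has superscript $\le d$ (and so lies tautologically in $(f, f^{(2)},\ldots, f^{(d)})$) or has superscript in $\{d+1,\ldots,n-1\}$ (and so lies in this ideal by the inductive hypothesis), so the identity places $f^{(n)}$ in $(f,f^{(2)},\ldots,f^{(d)})$ as well.

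Finally, for the equality $(f)^{(*)} = (f, f^{(2)},\ldots, f^{(d)})$, the inclusion $\supseteq$ is immediate since each $f^{(i)}$ belongs to the power-closed ideal $(f)^{(*)}$. For $\subseteq$, Claim~\ref{clm:*-gen} tells us that $(f)^{(*)} = (f, f^{(2)},\ldots, f^{(N)})$ for some $N$, and the preceding paragraph shows that each $f^{(k)}$ with $k > d$ already lies in $(f, f^{(2)},\ldots, f^{(d)})$, so the list of generators collapses.

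I do not anticipate a real obstacle here: the only subtle point is the symmetry that lets $\sigma_{i+1}(d)$, being independent of $\ell$, be pulled past the summation in $\ell$—which is precisely the content of Claim~\ref{clm:x^N} stated uniformly for all $\ell$. Everything else is a direct calculation followed by a one-line induction.
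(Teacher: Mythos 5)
Your proof is correct and matches the paper's intended argument: both derive the recurrence $f^{(n)} = \sum_{i=0}^{d-1}(-1)^i\sigma_{i+1}(d)\,f^{(n-i-1)}$ by applying Claim~\ref{clm:x^N} termwise (the crucial point being that $\sigma_{i+1}(d)$ is independent of $\ell$, so it factors through the sum over $\ell$), then close with the obvious induction and an appeal to Claim~\ref{clm:*-gen}.
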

{\sc Remark:} Note that Claim~\ref{clm:x^N} yields
Newton's Identities~\cite{Newton-wiki} by summing both the left
and the right hand side when $\ell$ goes from $1$ to $d$.
\begin{example}
\label{exa:yax}  
Suppose $\alpha\in \comps$ is a constant that is nonzero and
not a root of unity. By the above Lemma~\ref{lmm:symm} the ideal
 $I^{(*)} = (y-\alpha x,y^2-\alpha x^2)\subseteq R = {\comps}[x,y]$ 
is power-closed. We now argue that $I^{(*)}$ cannot be generated by 
products of monomials and binomials. Using the 
degree lexicographical order (or {\em deglex} for short) 
on $[x,y]$ for which $x < y$,  we obtain a 
Gr\"{o}bner basis  $B = \{y -\alpha x, x^2\}$ for
the ideal $I^{(*)}$ of $\comps[x,y]$. There is an associated system
of reductions
\begin{equation}
\label{eqn:red}
x^2 \mapsto 0, \ \ y\mapsto\alpha x
\end{equation}
for the elements of the quotient ring $R/I^{(*)} = \comps[x,y]/I^{(*)}$,
which is, in particular, a two dimensional vector space over $\comps$
spanned by images of $1$ and $x$.

Suppose that $(y-\alpha x, x^2) = I^{(*)} = (g_1,\ldots,g_m)$
where each $g_i$ is a product of a monomial and binomials.
Since $g_i\in I^{(*)}$ for each $i$ it must reduce to zero by 
the reductions in (\ref{eqn:red}) from $B$. Such reduction can
be done by first reducing each $y$ in each $g_i$ to $\alpha x$
and then reducing each $x^2$ to zero.

For $f\in\comps[x,y]$, $f \ne 0$, let $\delta(f)$ denote
the minimum total degree
of monomials in the expression of $f$ as a linear combination of monomials;
that is, writing $f(x,y) = \sum_{i=1}^kc_ix^{a_i}y^{b_i}$, where $c_1 \ne 0$ and
$a_1+b_1 \leq\cdots\leq a_k+b_k$, we have  $\delta(f) = a_1+b_1$.
 An element $g\in\comps[x,y]$ that is a product of
monomials and binomials has the form
\[
g = \pm x^ey^f\prod_{i=1}^k(x^{a_i}y^{b_i} - x^{c_i}y^{d_i})
\]
where we may arrange that $a_i+b_i\leq c_i+d_i$ for
each $i$, in which case we have $\delta(g) = e+f + \sum_{i=1}^k(a_i+b_i)$.
For each $f\in\comps[x,y]$ denote by $r(f)$ the polynomial in $\comps[x]$
obtained by reducing each $y$ in $f$ to $\alpha x$.
Since $\alpha$ is neither $0$ nor a root of unity,
we note that for none of the elements $g_i$ do
we have $r(g_i) = 0$ in $\comps[x]$. Further we have
$\delta(g_i) = \delta(r(g_i))$ for each $i\in\{1,\ldots,n\}$.
Since each $g_i\in I^{(*)} = (y-\alpha x,x^2)$ we have that
$\delta(r(g_i)) \geq 2$. Since $y-\alpha x\in (g_1,\ldots,g_n)$
we have $y-\alpha x = \sum_{i=1}^nf_ig_i$ for some $f_i\in\comps[x,y]$;
and we obtain a contradiction:
\[
1 = \delta(y-\alpha x) = \delta\left(\sum_{i=1}^nf_ig_i\right)
\geq 2.
\]
Hence, the ideal $I^{(*)} = (y-\alpha x,x^2)$
cannot be generated by $g_1,\ldots,g_n$.
\end{example}
We conclude with the following.
\begin{proposition}
\label{prp:prod-binom}
For $d\geq 2$, not all power-closed ideals in $\comps[x_1,\ldots,x_d]$
are generated by products of monomials and binomials.
\end{proposition}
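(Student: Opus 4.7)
The plan is to reduce the proposition to the case $d=2$, which has already been handled in Example~\ref{exa:yax}. There the ideal $I^{(*)} = (y-\alpha x, y^2-\alpha x^2) \subseteq \comps[x,y]$ is shown to be power-closed but not generated by products of monomials and binomials, so for $d=2$ there is nothing to add.

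For $d \geq 3$ I would ``inflate'' this example by adding the extra variables as generators. Fix $\alpha \in \comps$ that is nonzero and not a root of unity, and consider
\[
K = (x_2 - \alpha x_1,\ x_2^2 - \alpha x_1^2) + (x_3, x_4, \ldots, x_d) \subseteq \comps[x_1,\ldots,x_d].
\]
The first summand is power-closed when viewed in the larger ring: its generators involve only $x_1$ and $x_2$, so applying $f \mapsto f^{(i)}$ produces the same element regardless of the ambient ring, and the argument of Example~\ref{exa:yax} carries over verbatim. The second summand is a monomial ideal, hence power-closed by example (i) of Section~\ref{sec:defs-obs}. Therefore $K$ is power-closed by Proposition~\ref{prp:closure}.

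Now suppose, for contradiction, that $K = (g_1, \ldots, g_m)$ with each $g_i$ a product of a monomial and binomials. Consider the evaluation homomorphism $\pi : \comps[x_1,\ldots,x_d] \to \comps[x_1, x_2]$ that sends $x_3, \ldots, x_d$ to $0$. Clearly $\pi(K) = (x_2 - \alpha x_1,\, x_2^2 - \alpha x_1^2)$, which is exactly the ideal $I^{(*)}$ of Example~\ref{exa:yax} (with $x=x_1$, $y=x_2$). The key observation is that $\pi$ sends any product of a monomial and binomials to $0$ or to a product of a monomial and binomials in $\comps[x_1,x_2]$: a monomial maps either to $0$ or to a monomial, and any binomial $c_1 x_1^{a_1}\cdots x_d^{a_d} + c_2 x_1^{b_1}\cdots x_d^{b_d}$ maps to $0$, a single monomial, or another binomial, because $\pi$ can kill individual terms but never combine distinct ones. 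Hence the nonzero images $\pi(g_i)$ form a generating set for $I^{(*)}$ in $\comps[x_1, x_2]$ consisting of products of monomials and binomials, contradicting Example~\ref{exa:yax}.

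The main point that needs care is that $\pi$ really does preserve the class of products-of-monomials-and-binomials; this is an easy but essential bookkeeping observation, since $\pi$ applied to a binomial could a priori have yielded something more complicated had $\pi$ identified two different monomials. Since specialization of variables to $0$ can only annihilate individual monomial terms and never merge them, the class is closed under $\pi$ and the reduction to the $d=2$ case goes through.
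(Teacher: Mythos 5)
Your proof is correct, and it is actually a bit more careful than what the paper records. The paper's own proof of Proposition~\ref{prp:prod-binom} consists essentially of Example~\ref{exa:yax}, which treats only $d=2$; the proposition is then stated for all $d\ge 2$ with no further comment on how the example lifts to more variables. Your projection argument — restrict a hypothetical generating set of monomial–binomial products down to $\comps[x_1,x_2]$ by killing $x_3,\dots,x_d$, observe that this class of polynomials is stable under such a specialization, and then invoke Example~\ref{exa:yax} — is exactly the clean way to fill that small gap, and the ``key observation'' you flag (that setting variables to $0$ can annihilate monomial terms of a binomial but never merge two distinct ones into one) is the right thing to check and is checked correctly.

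Two minor remarks. First, the extra summand $(x_3,\dots,x_d)$ in your ideal $K$ is not needed: the ideal $J=(x_2-\alpha x_1,\,x_2^2-\alpha x_1^2)$ in $\comps[x_1,\dots,x_d]$ already works with the same projection argument, since $\pi(J)=(x_2-\alpha x_1,\,x_2^2-\alpha x_1^2)$ as well. Including $(x_3,\dots,x_d)$ is harmless, but you should be aware it does no work in the contradiction. Second, when you assert that the power-closedness of $(x_2-\alpha x_1,\,x_2^2-\alpha x_1^2)$ in the larger ring follows because ``the argument of Example~\ref{exa:yax} carries over verbatim,'' the cleaner citation is Proposition~\ref{prp:*-gen}: since $\lambda(x_2-\alpha x_1)=2$, one gets $(x_2-\alpha x_1)^{(*)}=(x_2-\alpha x_1,\,x_2^2-\alpha x_1^2)$ regardless of how many ambient variables there are, and a power-closure is power-closed by definition. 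Example~\ref{exa:yax} itself leans on Lemma~\ref{lmm:symm} with $d=2$ and a two-variable Gr\"obner-basis computation, neither of which is literally ``verbatim'' in the larger ring.
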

The above Proposition~\ref{prp:prod-binom} also holds when $d = 1$. However,
the case $d=1$ is special since ${\comps}[x]$ is a PID. We will discuss that
in more detail in Section~\ref{sec:one-var}.

For $f = \sum_{i=1}^ka_i\tilde{x}^{\tilde{p}_i}$ written as minimum linear
combination of monomials in $[x_1,\ldots,x_d]$,
let $\lambda(f) = k$. By substituting
$x_i\leftarrow \tilde{x}^{\tilde{p}_i} = x_1^{p_{i\/1}}\cdots x_d^{p_{i\/d}}$ 
for each $i$, we get by Lemma~\ref{lmm:symm} the following strengthening of 
Claim~\ref{clm:*-gen}.
\begin{proposition}
\label{prp:*-gen}
For an ideal $I = (f_1,\ldots,f_n)\subseteq R$ we have 
\[
I^{(*)} = (
f_1,f_1^{(2)},\ldots,f_1^{(\lambda(f_1))},f_2,f_2^{(2)},
\ldots,f_2^{(\lambda(f_2))}, \ldots,
f_n,f_n^{(2)},\ldots,f_d^{(\lambda(f_n))}).
\]
\end{proposition}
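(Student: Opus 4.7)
The plan is to reduce the general case directly to Lemma~\ref{lmm:symm}, applied in a polynomial ring whose number of variables is tailored to each generator. The key observation is that a polynomial $f$ with $k = \lambda(f)$ distinct monomials can be viewed as the image of a \emph{linear form} in $k$ fresh variables under a monomial substitution; Lemma~\ref{lmm:symm} constrains the tower of powers of that linear form using only $k$ elements, and the substitution transports this constraint back into $R$.

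First I would fix one generator $f$, set $k = \lambda(f)$, and write $f = \sum_{j=1}^{k} a_j \tilde{x}^{\tilde{p}_j}$ in its minimal monomial expansion, with $a_j \in \comps \setminus \{0\}$ and the $\tilde{x}^{\tilde{p}_j}$ pairwise distinct. Then I introduce fresh variables $y_1, \ldots, y_k$ and form the linear form $L = a_1 y_1 + \cdots + a_k y_k \in \comps[y_1, \ldots, y_k]$. Applying Lemma~\ref{lmm:symm} in this $k$-variable polynomial ring gives, for each $m \geq k+1$, an identity
\[
L^{(m)} = h_1 L + h_2 L^{(2)} + \cdots + h_k L^{(k)}
\]
for some $h_j \in \comps[y_1, \ldots, y_k]$.

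Next I would transport this identity to $R$ via the ring homomorphism $\phi \colon \comps[y_1, \ldots, y_k] \to R$ defined by $\phi(y_j) = \tilde{x}^{\tilde{p}_j}$. The critical verification is that $\phi$ intertwines the two power operators on $L$; since each $\phi(y_j)$ is a \emph{monomial}, one has $\phi(y_j^m) = (\tilde{x}^{\tilde{p}_j})^m = \tilde{x}^{m\tilde{p}_j}$, and hence $\phi(L^{(m)}) = \sum_j a_j \tilde{x}^{m\tilde{p}_j} = f^{(m)}$. Applying $\phi$ to the displayed identity above therefore yields $f^{(m)} = \sum_j \phi(h_j) f^{(j)} \in (f, f^{(2)}, \ldots, f^{(k)})$ for every $m > k$.

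Carrying this out for each generator $f_i$ separately and combining with Claim~\ref{clm:*-gen}, which supplies some $N$ for which $I^{(*)}$ is generated by $\{ f_i^{(j)} : 1 \leq i \leq n,\ 1 \leq j \leq N \}$, the generators $f_i^{(j)}$ with $j > \lambda(f_i)$ are redundant and can be dropped, leaving precisely the list in the proposition; the reverse inclusion is trivial since every listed generator lies in $I^{(*)}$. The only subtle point is the substitution step: once one observes that substituting \emph{monomials} (rather than arbitrary polynomials) for the $y_j$ is exactly what makes $\phi$ commute with the power operators on the linear form $L$, the remainder of the argument is bookkeeping.
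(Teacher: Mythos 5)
Your proof is correct and follows essentially the same route as the paper: both view each generator $f_i$ as the image, under a monomial substitution, of a linear form in $\lambda(f_i)$ fresh variables, then apply Lemma~\ref{lmm:symm} to that linear form and push the resulting membership back into $R$. You simply make the substitution homomorphism and its commutation with the power operator $g \mapsto g^{(m)}$ explicit, which the paper leaves as an implicit one-line observation.
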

\begin{example}
\label{exa:yax-ybx}
Let $I = (y - \alpha x)$ and
$J = (y - \beta x)$ be ideals of $\comps[x,y]$ where $\alpha\neq\beta$
be ideals as in the previous Example~\ref{exa:yax}.
Then, as before,
we have $I^{(*)} = (y - \alpha x, x^2)$ and
$J^{(*)} = (y - \beta x, x^2)$. Using reductions as in described above in
(\ref{eqn:red}) it is easy to see that $I^{(*)}\cap J^{(*)} = (x,y)^2$.
Since $\comps[x,y]$ is a unique factorization domain and 
$\alpha\neq\beta$, we have that $y -\alpha x$ and $y- \beta x$ are
relatively prime in $\comps[x,y]$. Therefore
$I\cap J = IJ = ((y -\alpha x)(y - \beta x))$ and hence by
Proposition~\ref{prp:*-gen} we get that
\[
(I\cap J)^{(*)} = ((y -\alpha x)(y - \beta x),
(y^2 -\alpha x^2)(y^2 - \beta x^2)),(y^3 -\alpha x^3)(y^3 - \beta x^3)).
\]
By looking at the total degree two
part of both $x^2$ and a general element of $(I\cap J)^{(*)}$ as in the
above display, we see that $x^2\not\in (I\cap J)^{(*)}$ and hence
$(I\cap J)^{(*)}\neq (x,y)^2 = I^{(*)}\cap J^{(*)}$.
\end{example}
\begin{observation}
\label{obs:cap-sum}
The $(*)$ operation on ideals in $\comps[x_1,\ldots,x_d]$
does not in general commute with intersection when $d\geq 2$;
there are ideals $I$ and $J$ of $\comps[x_1,\ldots,x_d]$ with
$(I\cap J)^{(*)}\neq I^{(*)}\cap J^{(*)}$.
\end{observation}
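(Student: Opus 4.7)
The plan is to verify in detail the counterexample sketched in Example~\ref{exa:yax-ybx}. It suffices to work in $R = \comps[x_1, x_2]$ (writing $x = x_1$, $y = x_2$ for readability), since the case $d \geq 3$ follows by considering the same ideals in $\comps[x_1, \ldots, x_d]$ and projecting via $\pi : x_i \mapsto 0$ for $i \geq 3$; this ring homomorphism is compatible with the $(*)$-operation, so any strict inequality $(I \cap J)^{(*)} \subsetneq I^{(*)} \cap J^{(*)}$ obtained in two variables is preserved. Fix $\alpha, \beta \in \comps$ with $\alpha \neq \beta$, both nonzero and not roots of unity, and set $I = (y - \alpha x)$, $J = (y - \beta x)$. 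The goal is to exhibit $x^2 \in I^{(*)} \cap J^{(*)}$ while $x^2 \notin (I \cap J)^{(*)}$.

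First I compute $I^{(*)} \cap J^{(*)}$. By Lemma~\ref{lmm:symm}, $I^{(*)} = (y - \alpha x,\, y^2 - \alpha x^2)$. The identity $(y - \alpha x)(y + \alpha x) = y^2 - \alpha^2 x^2$ yields $\alpha(\alpha - 1) x^2 = (y^2 - \alpha x^2) - (y - \alpha x)(y + \alpha x) \in I^{(*)}$, and since $\alpha \neq 0, 1$ we conclude $I^{(*)} = (y - \alpha x,\, x^2)$; analogously $J^{(*)} = (y - \beta x,\, x^2)$. Both ideals contain $(x, y)^2$, because $xy = x(y - \alpha x) + \alpha x^2$ and $y^2 = y(y - \alpha x) + \alpha xy$ lie in $I^{(*)}$ (and similarly for $J^{(*)}$). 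Reducing modulo $(x,y)^2$, the images of $I^{(*)}$ and $J^{(*)}$ become the one-dimensional $\comps$-subspaces spanned by the residues of $y - \alpha x$ and $y - \beta x$ in the two-dimensional space $(x,y)/(x,y)^2$; these subspaces are distinct since $\alpha \neq \beta$. Hence $I^{(*)} \cap J^{(*)} = (x,y)^2$, which contains $x^2$.

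The main obstacle is showing $x^2 \notin (I \cap J)^{(*)}$. Since $\comps[x,y]$ is a UFD and $y - \alpha x$, $y - \beta x$ are non-associate primes, $I \cap J = IJ = (h)$, where $h := (y - \alpha x)(y - \beta x) = y^2 - (\alpha + \beta)xy + \alpha\beta x^2$ and $\lambda(h) = 3$. Proposition~\ref{prp:*-gen} then gives $(I \cap J)^{(*)} = (h,\, h^{(2)},\, h^{(3)})$. The polynomials $h$, $h^{(2)}$, $h^{(3)}$ are homogeneous of total degrees $2$, $4$, $6$ respectively. Therefore, for any $p, q, r \in R$, the total-degree-$2$ homogeneous component of $p h + q h^{(2)} + r h^{(3)}$ equals $p(0,0) \cdot h$, since the contributions of $q h^{(2)}$ and $r h^{(3)}$ live in total degrees $\geq 4$. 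Consequently the degree-$2$ component of every element of $(I \cap J)^{(*)}$ is a $\comps$-scalar multiple of $h$; comparing the coefficient of $y^2$ shows that no such multiple equals $x^2$, so $x^2 \notin (I \cap J)^{(*)}$. Combined with the previous paragraph, this yields the strict inclusion $(I \cap J)^{(*)} \subsetneq I^{(*)} \cap J^{(*)}$, establishing the observation.
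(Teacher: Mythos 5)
Your proof is correct and follows essentially the same route as the paper's Example~\ref{exa:yax-ybx}: same ideals $I=(y-\alpha x)$, $J=(y-\beta x)$, same identification $I^{(*)}\cap J^{(*)}=(x,y)^2$, and the same total-degree-$2$ argument showing $x^2\notin(I\cap J)^{(*)}=(h,h^{(2)},h^{(3)})$. The only differences are presentational (you verify $I^{(*)}=(y-\alpha x,x^2)$ and the intersection directly rather than via the Gr\"obner basis reductions of Example~\ref{exa:yax}, and you spell out the routine reduction of $d\ge 3$ to $d=2$, which the paper leaves implicit).
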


Consider again a principal ideal $I = (f)$ where
$f = a_1x_1 + \cdots + a_dx_d\in R$. By Lemma~\ref{lmm:symm}
we have $I^{(*)} = (f)^{(*)} = (f,f^{(2)},\ldots,f^{(d)})$.
We can assume that $a_d\neq 0$. By using a version
of Gauss-Jordan elimination/reduction on the elements
$f, f^{(2)},\ldots, f^{(d)}$ when we view the variables
$x_1,\ldots, x_d$ as the coefficients we get
\begin{equation}
  \label{eqn:linear-ideal}
  \begin{split}
    (f)^{(*)} = & \left( a_1x_1 + a_2x_2 + a_3x_3 + \cdots + a_dx_x, \right. \\
    & \left. a_2x_2(x_2-x_1) + a_3x_3(x_3-x_1) + \cdots a_dx_d(x_d-x_1),
    \right. \\
    & \left. a_3x_3(x_3-x_1)(x_3-x_2) +\cdots a_dx_d(x_d-x_1)(x_d-x_2) \right. \\
    & \left. \vdots \right. \\
    & \left. a_dx_d(x_d-x_1)(x_d-x_2)\cdots(x_d-x_{d-1}) \right). 
    \end{split}
  \end{equation}
By the same token we have the following.
\begin{observation}
\label{obs:principal}  
For a general principal ideal $I = (f)$
where $f = \sum_{i=1}^ka_i\tilde{x}^{\tilde{p}_i}\in R$ we have that
\[
\begin{split}
  (f)^{(*)} = & \left( \sum_{i=1}^ka_i\tilde{x}^{\tilde{p}_i}, \right. \\
  & \left. \sum_{i=2}^ka_i\tilde{x}^{\tilde{p}_i}
  (\tilde{x}^{\tilde{p}_i} - \tilde{x}^{\tilde{p}_1}), \right. \\
  & \left. \sum_{i=3}^ka_i\tilde{x}^{\tilde{p}_i}
  (\tilde{x}^{\tilde{p}_i} - \tilde{x}^{\tilde{p}_1})
  (\tilde{x}^{\tilde{p}_i} - \tilde{x}^{\tilde{p}_2}), \right. \\
  & \left. \vdots \right. \\
  & \left. \vphantom{\sum_{i=1}^ka_i\tilde{x}^{\tilde{p}_i}}
  a_k\tilde{x}^{\tilde{p}_k}(\tilde{x}^{\tilde{p}_k} - \tilde{x}^{\tilde{p}_1})
  (\tilde{x}^{\tilde{p}_k} - \tilde{x}^{\tilde{p}_2})\cdots
  (\tilde{x}^{\tilde{p}_k} - \tilde{x}^{\tilde{p}_{k-1}})\right).
  \end{split}
  \]
\end{observation}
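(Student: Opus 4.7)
The plan is to reduce Observation~\ref{obs:principal} to the already-treated linear case in equation~(\ref{eqn:linear-ideal}) by the substitution trick indicated in the paragraph just above the statement. Since $f = \sum_{i=1}^{k} a_i \tilde{x}^{\tilde{p}_i}$ is written as a \emph{minimal} linear combination of distinct monomials, we have $\lambda(f) = k$, so Proposition~\ref{prp:*-gen} immediately gives
\[
(f)^{(*)} = (f, f^{(2)}, \ldots, f^{(k)}).
\]
What remains is to rewrite these $k$ generators in the triangular form claimed in the statement.

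Introduce fresh variables $y_1, \ldots, y_k$ and let $g = a_1 y_1 + \cdots + a_k y_k \in \comps[y_1, \ldots, y_k]$. Denote by $\phi \colon \comps[y_1, \ldots, y_k] \to R$ the $\comps$-algebra homomorphism determined by $\phi(y_i) = \tilde{x}^{\tilde{p}_i}$. Since $g^{(n)} = a_1 y_1^n + \cdots + a_k y_k^n$, we have $\phi(g^{(n)}) = f^{(n)}$ for every $n \ge 1$. All $a_i$ are nonzero (by minimality of the representation of $f$), so Lemma~\ref{lmm:symm} and the derivation of~(\ref{eqn:linear-ideal}) apply verbatim to $g$ in the $k$-variable polynomial ring $\comps[y_1, \ldots, y_k]$, with $d$ replaced by $k$ and $x_i$ by $y_i$. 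This shows the equality of ideals
\[
(g, g^{(2)}, \ldots, g^{(k)}) = (h_1, h_2, \ldots, h_k)\quad\text{in}\quad\comps[y_1,\ldots,y_k],
\]
where $h_j = \sum_{i=j}^{k} a_i\, y_i (y_i - y_1)(y_i - y_2) \cdots (y_i - y_{j-1})$ for $j = 1, \ldots, k$.

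Finally, apply $\phi$ to both sides of this equality of ideals. On the left, the generators $g, g^{(2)}, \ldots, g^{(k)}$ are sent to $f, f^{(2)}, \ldots, f^{(k)}$, which generate $(f)^{(*)}$ in $R$ by the first paragraph. On the right, $\phi(h_j)$ is visibly the $j$-th polynomial listed in the statement, since substitution commutes with sums and products. Hence $(f)^{(*)}$ admits the claimed triangular presentation. The only bookkeeping step is to check that the linear combinations used in deriving~(\ref{eqn:linear-ideal}) involve only ring elements (no inverses), so that the passage through $\phi$ preserves every step; this is clear from the Gauss--Jordan reduction performed there, and I see no other obstacle.
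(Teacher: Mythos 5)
Your proof is correct and follows the paper's own line of reasoning: the paper derives the triangular presentation in equation~(\ref{eqn:linear-ideal}) for linear $f$ via a Gauss--Jordan change of generators and then transports it to general $f$ ``by the same token,'' using the monomial substitution $x_i \leftarrow \tilde{x}^{\tilde{p}_i}$ already introduced before Proposition~\ref{prp:*-gen}. You have merely spelled out the substitution as an explicit $\comps$-algebra homomorphism $\phi$ on a polynomial ring in fresh variables, which is a clean way to make the argument rigorous.
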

Consider again $f = a_1x_1 + \cdots + a_dx_d\in R$.
Suppose we want to describe the set of zeroes
$\tilde{x} = (x_1,\ldots,x_d)$ of the ideal 
$I^{(*)} = (f,f^{(2)},\ldots,f^{(d)})$. We still can assume that $a_d\neq 0$.
Writing the equations
as $f = 0, f^{(2)} = 0, \ldots, f^{(d)} = 0$ and viewing
the variables $x_i$ as coefficients we obtain an equivalent
system of linear equations
\begin{equation}
\label{eqn:Vandermonde}
V(\tilde{x})^tD(\tilde x)\tilde{a}^t = \tilde{0}^t,
\end{equation}
where $V(\tilde{x})$ denotes the $d\times d$ Vandermonde matrix in 
terms of the variables
$\tilde{x} = (x_1,\ldots,x_d)$ and $D(\tilde x)$ is the diagonal matrix
having diagonal entries equal to those of $\tilde{x}$. By assumption,
 $\tilde{a} \neq \tilde{0}$, so the determinant of
$V(\tilde{x})D(\tilde x)$ must be zero, implying that either
some $x_i$ is zero or, for some pair $i \ne j$, $x_i = x_j$.
Using (\ref{eqn:linear-ideal}) above, a stronger statement is obtained
from the following system:
\begin{eqnarray*}
a_1x_1 + a_2x_2 + a_3x_3 + \cdots + a_dx_x & = & 0 \\
a_2x_2(x_2-x_1) + a_3x_3(x_3-x_1) + \cdots a_dx_d(x_d-x_1) & = & 0 \\
a_3x_3(x_3-x_1)(x_3-x_2) +\cdots a_dx_d(x_d-x_1)(x_d-x_2) & = & 0 \\
  & \vdots & \\
a_dx_d(x_d-x_1)(x_d-x_2)\cdots(x_d-x_{d-1}) & = & 0.
\end{eqnarray*}
By assumption we have $a_d\neq 0$, so the last equation
implies that $x_d(x_d-x_1)(x_d-x_2)\cdots(x_d-x_{d-1}) = 0$. 
Hence, either $x_d = 0$ or $x_d = x_i$ for some $i\in\{1,\ldots,d-1\}$.
By induction on $d$ and based on what the coefficients
$a_1,\ldots,a_d\in {\comps}$ are, we can then deduce the following
about the {\em zero locus}, or the {\em affine variety},
$Z(J) = \{\tilde w \in (\comps^*)^d : f(\tilde w) = 0$ for each $f\in J\}$;
of an ideal $J$, namely the following.
\begin{proposition}
\label{prp:Z(f*)}
For $f = a_1x_1 + \cdots + a_dx_d\in R$ and $I = (f)$
we have $I^{(*)} = (f,f^{(2)},\ldots,f^{(d)})$ and there
is an antichain $\mathcal{A}\subseteq 2^{[d]} = \power(\{1,\ldots,d\})$
such that $\sum_{i\in A}a_i  = 0$ for each $A\in\mathcal{A}$ and
\[
Z(I^{(*)}) = 
\bigcup_{A\in\mathcal{A}}\left\{\sum_{i\in A}t\tilde{e}_i : t\in \comps \right\}
= \bigcup_{A\in\mathcal{A}}\comps\left(\sum_{i\in A}\tilde{e}_i\right)
\]
a union of lines in $\comps ^d$ with pairwise distinguishing $1$-projections.
In particular, if no partial sum $\sum_{i\in A}a_i$ equals zero for any
$A\subseteq [d]$, then $Z(I^{(*)}) = \{\tilde{0}\}$.
\end{proposition}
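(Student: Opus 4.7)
The plan is to establish the proposition in two parts. The equality $I^{(*)} = (f, f^{(2)}, \ldots, f^{(d)})$ is immediate from Lemma~\ref{lmm:symm}, so the substantive content lies in the description of $Z(I^{(*)})$.

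For the containment $\supseteq$ I would compute directly: if $A \subseteq [d]$ satisfies $\sum_{i \in A} a_i = 0$ and $\tilde{w} = \sum_{i \in A} t \tilde{e}_i$ for some $t \in \comps$, then for each $k \geq 1$ we have $f^{(k)}(\tilde{w}) = \sum_{i \in A} a_i t^k = t^k \sum_{i \in A} a_i = 0$. Since the $f^{(k)}$ generate $I^{(*)}$, this puts $\tilde{w}$ in $Z(I^{(*)})$.

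For the reverse containment I would argue by induction on $d$, following the outline given in the paragraph preceding the proposition. The base case $d = 1$ is immediate. For the inductive step, assume without loss of generality that $a_d \neq 0$ (otherwise simply discard the corresponding variable). Let $\tilde{w} \in Z(I^{(*)})$. The last generator in the reformulation (\ref{eqn:linear-ideal}) is $a_d x_d (x_d - x_1) \cdots (x_d - x_{d-1})$, so its vanishing at $\tilde{w}$ forces either $w_d = 0$ or $w_d = w_i$ for some $i < d$. In the first case, $(w_1, \ldots, w_{d-1}) \in Z((f')^{(*)})$ for $f' = \sum_{j < d} a_j x_j$, and the inductive hypothesis gives an antichain $\mathcal{A}'$ in $2^{[d-1]}$ such that $\tilde{w}$ lies on one of the associated lines (viewed in $\comps^d$ by appending $0$ in the $d$-th coordinate). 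In the second case, the substitution $x_d \mapsto x_i$ shows that $(w_1, \ldots, w_{d-1}) \in Z((f'')^{(*)})$ where $f''$ is the linear form in $d - 1$ variables whose coefficient vector agrees with that of $f$ except in position $i$, where it is $a_i + a_d$; induction produces an antichain $\mathcal{A}''$, and each $A'' \in \mathcal{A}''$ lifts to $A \subseteq [d]$ via $A = A'' \cup \{d\}$ if $i \in A''$ and $A = A''$ otherwise, so that $\sum_{j \in A} a_j = \sum_{j \in A''} a_j'' = 0$.

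The main obstacle will be to organize the subsets that arise through the inductive cases into a single antichain $\mathcal{A} \subseteq 2^{[d]}$ whose lines have pairwise distinguishing $1$-projections. I would handle this by taking $\mathcal{A}$ to be the collection of all $A \subseteq [d]$ for which $\{\sum_{i \in A} t \tilde{e}_i : t \in \comps\} \subseteq Z(I^{(*)})$ and then pruning any redundancies: if $A \subsetneq B$ both satisfy $\sum_{i \in A} a_i = 0$ and $\sum_{i \in B} a_i = 0$, then $B \setminus A$ also has coefficient sum zero and yields a line of smaller support, so no comparable pair need be retained. The distinct supports of the surviving subsets then give pairwise distinct lines, witnessing the $1$-projection separation. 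The ``in particular'' clause then follows immediately: if no partial sum $\sum_{i \in A} a_i$ vanishes, then $\mathcal{A}$ must be empty and $Z(I^{(*)}) = \{\tilde{0}\}$.
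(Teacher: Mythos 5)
The $\supseteq$ containment is fine, but the $\subseteq$ direction has a gap in the case $w_d = w_i$, and the gap cannot be closed because the reduction you make there leads to an inductive hypothesis that is false. When you replace $f$ by $f''$ (whose $i$-th coefficient is $a_i + a_d$), that new coefficient can vanish even if all of the original $a_j$ are nonzero, and as soon as $f''$ has a zero coefficient in $d-1\geq 3$ variables its zero locus contains an entire coordinate plane, not merely a union of lines $\comps\bigl(\sum_{i\in A}\tilde e_i\bigr)$. Concretely, take $f = x_1 + x_2 - x_3 - x_4$; here $f^{(k)}(1,2,1,2) = 1 + 2^k - 1 - 2^k = 0$ for all $k$, so $(1,2,1,2)\in Z(I^{(*)})$, yet this point lies on no line of the required form (such lines have all nonzero coordinates equal). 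Tracing your induction at this point: $w_4 = w_2$ sends you to $f'' = x_1 + 0\cdot x_2 - x_3$ in three variables, and $Z((f'')^{(*)}) = Z((x_1-x_3)) = \{(a,b,a): a,b\in\comps\}$, a $2$-plane containing $(1,2,1)$ but strictly larger than the union $\comps\tilde e_2 \cup \comps(\tilde e_1+\tilde e_3)\cup \comps(\tilde e_1+\tilde e_2+\tilde e_3)$. So the induction does not close. (The paper's own discussion is here only a sketch, saying ``By induction on $d$ and based on what the coefficients are, we can then deduce the following,'' and the issue above suggests the statement itself needs qualification.)

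Separately, the pruning step is also flawed. If $A\subsetneq B$ both have vanishing partial sums and you discard $B$ in favor of $A$ and $B\setminus A$, the line $\comps\bigl(\sum_{i\in B}\tilde e_i\bigr)$ is not contained in the union of the lines for $A$ and $B\setminus A$: for $B = \{1,2,3,4\}$, $A = \{1,3\}$ the point $(1,1,1,1)$ lies on the line for $B$ but not on those for $\{1,3\}$ or $\{2,4\}$. So this bookkeeping device does not produce an antichain whose union is the same set. Finally, a minor inconsistency: in your ``in particular'' clause you take $\mathcal A$ to be empty and conclude $Z(I^{(*)}) = \{\tilde 0\}$, but an empty $\mathcal A$ gives an empty union; you want $\mathcal A = \{\emptyset\}$ (the empty set has vanishing empty sum and the corresponding line is $\{\tilde 0\}$).
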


\section{Laurent polynomials}
\label{sec:Laurent}

In this section we briefly consider the corresponding Laurent polynomial ring
over the complex number field
$R^{\pm}= \comps[x_1,\ldots,x_d]^{\pm} = \comps[x_1^{\pm 1},\ldots,x_d^{\pm 1}]$
in the variables $x_1,\ldots,x_n$ and their multiplicative inverses
$x_1^{-1},\ldots,x_n^{-1}$. Note that as an image of 
$\comps[x_1,\ldots,x_n,y]$, namely
$R^{\pm}\cong \comps[x_1,\ldots,x_d,y]/(x_1\cdots x_dy - 1)$,
then $R^{\pm}$ is Noetherian. As a localization $S^{-1}R$ where $S$ is the
multiplicatively closed semigroup $S = [x_1,\ldots,x_d]$
it is also a UFD. From Claim~\ref{clm:x^N} we get by dividing 
through by an arbitrary power of $x_{\ell}$ that
\[
x_{\ell}^n = \sum_{i=0}^{d-1}(-1)^ix_{\ell}^{n-i-1}\sigma_{i+1}(d)
\]
holds for every $n\in\ints$. Therefore for any fixed $k\in\ints$,
$f = a_1x_1 + \cdots + a_dx_d\in R^{\pm}$, $I = (f)$ and  
$I_k = (f^{(k)},\ldots,f^{(k+d-1)})$ then, as in Lemma~\ref{lmm:symm},
$f^{(n)}\in I_k$ for any $n\geq k$.

Also by Claim~\ref{clm:x^N} we get 
\[
x_{\ell}^{-n} = 
\sum_{i=1}^d \frac{(-1)^i\sigma_{d-i}(d)}{\sigma_d(d)}x_{\ell}^{i-n}
\]
for each $n\in\ints$ and hence $f^{(n)}\in I_k$ for each $n\leq k$, 
and therefore $f^{(n)}\in I_1 = (f,f^{(2)},\ldots,f^{(d)})$
for each $n\in\ints$. We summarize in the following.
\begin{lemma}
\label{lmm:symm-pm}
If $f = a_1x_1 + \cdots + a_dx_d\in R^{\pm}$ and
$I_k = (f^{(k)},\ldots,f^{(k+d-1)})$ for some $k\in\ints$,
then $f^{(n)}\in I_k$ for all $n\in\ints$.
\end{lemma}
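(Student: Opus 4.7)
The plan is to extract a two-sided linear recurrence for the sequence $(f^{(n)})_{n\in\ints}$ from the identity already displayed at the start of this section,
\[
x_\ell^n = \sum_{i=0}^{d-1}(-1)^i x_\ell^{n-i-1}\sigma_{i+1}(d),
\]
which is valid in $R^{\pm}$ for every $\ell\in\{1,\ldots,d\}$ and every $n\in\ints$. Multiplying this identity by $a_\ell$ and summing over $\ell$ immediately yields, by linearity of the substitution $\tilde{x}\mapsto\tilde{x}^n$ on monomials of degree one,
\[
f^{(n)} = \sum_{i=0}^{d-1}(-1)^i \sigma_{i+1}(d)\, f^{(n-i-1)} \qquad (n\in\ints).
\]
So $f^{(n)}\in (f^{(n-d)},f^{(n-d+1)},\ldots,f^{(n-1)})$ in $R^{\pm}$ for \emph{every} integer $n$, without any positivity restriction; this is the Laurent analogue of Lemma~\ref{lmm:symm}.

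Using this recurrence, I would first handle $n\geq k+d$ by induction on $n-k$: the base cases $n\in\{k,\ldots,k+d-1\}$ are immediate from the definition of $I_k$, and for $n\geq k+d$ the recurrence writes $f^{(n)}$ as an explicit $R^{\pm}$-linear combination of $f^{(n-d)},\ldots,f^{(n-1)}$, each of which already lies in $I_k$ by the inductive hypothesis. This reproduces the forward half stated in the paragraph preceding the lemma.

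The backward direction is the crux, and here is where passing from $R$ to $R^{\pm}$ really matters. Isolating the $i=d-1$ summand of the same recurrence gives
\[
(-1)^{d-1}\sigma_d(d)\, f^{(n-d)} = f^{(n)} - \sum_{i=0}^{d-2}(-1)^i\sigma_{i+1}(d)\, f^{(n-i-1)}.
\]
The key observation is that $\sigma_d(d) = x_1x_2\cdots x_d$ is a \emph{unit} in $R^{\pm}$; dividing through, one expresses $f^{(n-d)}$ as an $R^{\pm}$-linear combination of $f^{(n-d+1)},\ldots,f^{(n)}$. A symmetric downward induction on $k-n$, starting from the generators $f^{(k)},\ldots,f^{(k+d-1)}\in I_k$, then gives $f^{(n)}\in I_k$ for every $n\leq k-1$, completing the proof.

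I expect no substantial obstacle beyond recognizing that the invertibility of $\sigma_d(d)$ in $R^{\pm}$ is precisely what makes the backward recursion available; once that is noted, both inductions are routine. Indeed this is exactly the feature that makes Lemma~\ref{lmm:symm-pm} strictly sharper than its polynomial counterpart Lemma~\ref{lmm:symm}, where only the forward direction can be established.
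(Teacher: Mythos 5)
Your proof is correct and follows essentially the same approach as the paper: establish the $d$-term linear recurrence $f^{(n)} = \sum_{i=0}^{d-1}(-1)^i\sigma_{i+1}(d)\,f^{(n-i-1)}$ valid over $\ints$, run forward induction for $n\ge k+d$, and then isolate the $\sigma_d(d)$ term (using that $\sigma_d(d)=x_1\cdots x_d$ is a unit in $R^\pm$) to run the induction backward for $n<k$. You spell out the two inductions slightly more explicitly than the paper does, but the underlying idea and key observation (invertibility of $\sigma_d(d)$) are identical.
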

{\sc Note:} For $I_k\subseteq R^{\pm}$ a proper ideal,
then we must have $\sum_{i=1}^da_i = 0$.

As a corollary of Lemma~\ref{lmm:symm-pm} we then have the following.
\begin{corollary}
\label{cor:pm-gen}
For any $f \in R^{\pm}$ and $I_k = (f^{(k)},\ldots,f^{(k+\lambda(f)-1)})$ 
for some $k\in\ints$, then $f^{(n)}\in I_k$ for each $k\in\ints$.
\end{corollary}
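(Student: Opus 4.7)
The plan is to reduce the general Corollary to the linear case of Lemma~\ref{lmm:symm-pm} via a monomial substitution. First, write $f = \sum_{i=1}^{\lambda(f)} a_i \tilde{x}^{\tilde{p}_i}$ as a minimal linear combination of Laurent monomials, with the exponent vectors $\tilde{p}_i \in \ints^d$ pairwise distinct and each $a_i \in \comps^*$. In the auxiliary Laurent polynomial ring $R^{\pm}_* := \comps[y_1^{\pm 1},\ldots,y_{\lambda(f)}^{\pm 1}]$, form the linear form $g = a_1 y_1 + \cdots + a_{\lambda(f)} y_{\lambda(f)}$ and define a $\comps$-algebra homomorphism $\phi : R^{\pm}_* \to R^{\pm}$ by $\phi(y_i) = \tilde{x}^{\tilde{p}_i}$; by construction $\phi(g) = f$.

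The next step is to verify that $\phi$ intertwines the two power-substitution operators. Both operations $h \mapsto h^{(n)}$ on $R^{\pm}_*$ and on $R^{\pm}$ are themselves $\comps$-algebra endomorphisms (sending $y_i \mapsto y_i^n$, respectively $x_j \mapsto x_j^n$), so it suffices to check the identity $\phi(y_i^n) = (\phi(y_i))^{(n)}$ on the generators $y_i$. Both sides equal $\tilde{x}^{n\tilde{p}_i}$, so the intertwining holds, and consequently $\phi(g^{(m)}) = f^{(m)}$ for every $m \in \ints$.

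Finally, I would apply Lemma~\ref{lmm:symm-pm} to $g$ in the ring $R^{\pm}_*$, viewed as a Laurent polynomial ring in $\lambda(f)$ variables. The lemma yields, for any fixed $k \in \ints$ and any $n \in \ints$, an expression
\[
g^{(n)} = \sum_{j=0}^{\lambda(f)-1} h_j\, g^{(k+j)}
\]
with coefficients $h_j \in R^{\pm}_*$. Applying $\phi$ and invoking the intertwining identity converts this into
\[
f^{(n)} = \sum_{j=0}^{\lambda(f)-1} \phi(h_j)\, f^{(k+j)} \in I_k,
\]
which is the desired conclusion.

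The only delicate point is the intertwining step, and it works precisely because $\phi$ sends the variables $y_i$ to pure monomials, on which the two operations ``raise to the $n$-th power'' and ``substitute $x_j \mapsto x_j^n$'' coincide. Everything else is a direct transport via $\phi$ of the already-established linear case.
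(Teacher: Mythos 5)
Your proof is correct and follows the same route the paper has in mind: the passage preceding Proposition~\ref{prp:*-gen} invokes exactly the substitution $x_i\leftarrow\tilde{x}^{\tilde{p}_i}$ to reduce to the linear case, and the paper cites Lemma~\ref{lmm:symm-pm} for the corollary without spelling out the details. You have simply made the implicit substitution argument explicit (the $\comps$-algebra homomorphism $\phi$ and its intertwining property with the power-substitution operators), which is a faithful and careful rendering of the intended argument.
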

Further we have the following generalization directly
from Corollary~\ref{cor:pm-gen}.
\begin{theorem}
\label{thm:pm-gen2}
For an ideal $I = (f_1,\ldots,f_n) \subseteq R^{\pm}$ and for
any integers $k_1,\ldots,k_d\in\ints$ we have
\[
I^{(*)} = (
f_1^{(k_1)},\ldots,f_1^{(k_1 + \lambda(f_1)-1)},
f_2^{(k_2)},\ldots,f_2^{(k_2 + \lambda(f_2)-1)}, 
\ldots,
f_n^{(k_n)},\ldots,f_d^{(k_n + \lambda(f_n)-1)}).
\]
\end{theorem}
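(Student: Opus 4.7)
Let $J$ denote the ideal on the right-hand side of the claimed identity. The plan is to establish $J = I^{(*)}$ by proving the two inclusions separately, with Corollary~\ref{cor:pm-gen} doing the bulk of the work in both directions.

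For the inclusion $J \subseteq I^{(*)}$, I would observe that $I^{(*)}$ is power-closed and contains each $f_i$, hence contains the block $f_i, f_i^{(2)}, \ldots, f_i^{(\lambda(f_i))}$ of $\lambda(f_i)$ consecutive integer powers. Applying Corollary~\ref{cor:pm-gen} (with $k=1$) to this block inside $R^{\pm}$, I conclude that $f_i^{(n)} \in I^{(*)}$ for \emph{every} $n \in \ints$, in particular for the generators $f_i^{(k_i + j)}$ with $0 \le j \le \lambda(f_i) - 1$. Summing over $i$ gives $J \subseteq I^{(*)}$.

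For the reverse inclusion $I^{(*)} \subseteq J$, it suffices to show that $J$ is itself a power-closed ideal containing $I$. Containment of $I$ is immediate: Corollary~\ref{cor:pm-gen} applied to the specific block of $\lambda(f_i)$ consecutive powers that generate $J$ gives $f_i^{(n)} \in J$ for all $n \in \ints$, so in particular $f_i = f_i^{(1)} \in J$. To see $J$ is power-closed, take an arbitrary element $g = \sum_{i,j} h_{i,j}\, f_i^{(k_i + j)} \in J$ and any $m \in \nats$; applying the ring endomorphism $\tilde x \mapsto \tilde x^{m}$ of $R^{\pm}$ termwise yields
\[
g^{(m)} = \sum_{i,j} h_{i,j}^{(m)} f_i^{(m(k_i+j))}.
\]
Each exponent $m(k_i + j)$ lies in $\ints$, so each $f_i^{(m(k_i+j))}$ is in $J$, again by Corollary~\ref{cor:pm-gen}. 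Therefore $g^{(m)} \in J$, and by minimality of $I^{(*)}$ we conclude $I^{(*)} \subseteq J$.

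The real work has already been absorbed into Corollary~\ref{cor:pm-gen}; the only point demanding care is that the substitution $\tilde x \mapsto \tilde x^m$ is a well-defined ring homomorphism on $R^{\pm}$ (because each $x_\ell$ is a unit), so that the calculation of $g^{(m)}$ above is legitimate. This is precisely why the theorem is a Laurent phenomenon: allowing arbitrary integer starting windows $[k_i, k_i + \lambda(f_i) - 1]$ for each generator is exactly the extra flexibility the Laurent ring gives over the polynomial setting of Proposition~\ref{prp:*-gen}.
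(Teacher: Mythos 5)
Your proof is correct and is a faithful elaboration of the paper's own (terse) argument, which simply says the theorem follows directly from Corollary~\ref{cor:pm-gen}; both inclusions are obtained exactly by the route you describe, namely that any block of $\lambda(f_i)$ consecutive powers of $f_i$ already generates an ideal containing $f_i^{(n)}$ for all $n\in\ints$. One small remark: the observation about $\tilde x\mapsto\tilde x^m$ being a ring endomorphism because the $x_\ell$ are units is only needed when you want to substitute $m\le 0$; for $m\in\nats$ the substitution is already an endomorphism of $R$, and the genuinely Laurent-specific point is rather that the exponents $m(k_i+j)$ may be negative, which Corollary~\ref{cor:pm-gen} handles.
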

From Theorem~\ref{thm:pm-gen2} we get the following.
\begin{observation}
\label{obs:ints}
For an ideal $I = (f_1,\ldots,f_n) \subseteq R^{\pm}$ we
have $f_{\ell}^{(i)}\in I^{(*)}$ for each $\ell\in\{1,\ldots,d\}$ 
and each $i\in\ints$. In particular, if $f\in I\subseteq R^{\pm}$ and 
$I = I^{(*)}$ is power-closed, then $f^{(i)}\in I$ for each $i\in\ints$.
\end{observation}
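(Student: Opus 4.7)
The plan is to derive the observation directly from Theorem~\ref{thm:pm-gen2} by choosing the shift parameters to isolate the desired power. Fix a generator $f_\ell$ and an integer $i \in \ints$. Apply Theorem~\ref{thm:pm-gen2} with the choice $k_\ell := i$ and with the remaining shifts $k_j$ (for $j \ne \ell$) set to any convenient integer, for instance $k_j := 1$. Then the explicit generating set for $I^{(*)}$ produced by the theorem contains $f_\ell^{(k_\ell)} = f_\ell^{(i)}$ as one of its listed generators, so $f_\ell^{(i)} \in I^{(*)}$. Since $\ell$ and $i$ were arbitrary, the first assertion follows.

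For the ``in particular'' statement, suppose $I$ is power-closed, so that $I = I^{(*)}$, and let $f \in I$. Then $I$ is also generated by the enlarged list $f_1, \ldots, f_n, f$, since adjoining an element already in $I$ does not change the ideal. Applying the first part of the observation to this enlarged generating set, treating $f$ itself as one of the generators, yields $f^{(i)} \in I^{(*)} = I$ for every $i \in \ints$, as required.

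I do not anticipate any real difficulty here: the observation is essentially a restatement of the flexibility already afforded by Theorem~\ref{thm:pm-gen2}. The one conceptual point worth emphasizing, and the only place where the argument could go wrong if misread, is that the shift parameters $k_1, \ldots, k_n$ in that theorem may be chosen independently for each generator. That independence is precisely what allows a prescribed $f_\ell^{(i)}$, for any $i \in \ints$ (positive, zero, or negative, since we are in $R^{\pm}$), to be forced to appear in the displayed generating set of $I^{(*)}$.
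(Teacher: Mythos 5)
Your argument is correct and matches the paper's intent: the paper simply cites Theorem~\ref{thm:pm-gen2} as the source of this observation, and choosing the free shift parameter $k_\ell = i$ to force $f_\ell^{(i)}$ into the displayed generating set is exactly the point. The ``in particular'' clause via enlarging the generating set by $f \in I$ is also the natural (and correct) way to finish.
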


As with the usual polynomial ring $R = \comps[x_1,\ldots,x_n]$, we have
analogously that each ideal 
$I\subseteq R^{\pm}= \comps[x_1^{\pm 1},\ldots,x_d^{\pm 1}]$ generated 
by products of binomials in $I = (g_1,\ldots,g_n)$ is a power-closed
ideal of $R^{\pm}$. Also, in a similar fashion as for the polynomial
ring $R$, there are power-closed ideals of $R^{\pm}$ that are not
generated by products of binomials, as we will now show.
First note that in Example~\ref{exa:yax} for $d\geq 2$,
the ideal $I = (y-\alpha x)\subseteq \comps[x,y]^{\pm}$
we have by Observation~\ref{obs:ints} that
$I^{(*)}= R^{\pm}$ whenever $\alpha\neq 1$ so we cannot
use the same ideal as that Example~\ref{exa:yax}.
We will rather use a variant of it. Again, the case $d=1$ is special
and we will discuss that in detail in Section~\ref{sec:one-var}.
\begin{example}
\label{exa:zaxby}  
For given nonzero constants $\alpha,\beta\in\comps$
consider the ideal 
$I = (z - \alpha x - \beta y)\subseteq \comps[x,y,z]^{\pm}$,
where we assume $\alpha + \beta = 1$.
By Theorem~\ref{thm:pm-gen2} we obtain its power-closure
\[
I^{(*)} = 
(z - \alpha x - \beta y,z^2 - \alpha x^2 - \beta y^2,
z^3 - \alpha x^3 - \beta y^3).
\]
Since $\alpha+\beta=1$, this ideal is contained in the maximal ideal
$(x-1,y-1,z-1)$; so, $I^{(*)} \ne \comps[x,y,z]^\pm$.
By isolating $z$ in the first generator and writing the other
two in terms of $x$ and $y$, we obtain that
$I^{(*)} = (z - \alpha x - \beta y, (y-x)^2)$. Since
$z - \alpha x -\beta y\in I^{(*)}$ we have by
Observation~\ref{obs:ints} that $z^k - \alpha x^k - \beta y^k\in I^{(*)}$
for each $k\in\ints$. 

Assume that $I^{(*)}$  is generated by products
of binomials in $\comps[x,y,z]^{\pm}$.  The binomials may be
taken to be  of the form $x^ay^bz^c - 1$ where $a,b,c\in\ints$, so we may
write $I^{(*)} = (g_1, \ldots, g_n)$, where each $g_i$ is a product of
binomials of the form $x^ay^bz^c - 1$.
Since each $g_i \in (z - \alpha x - \beta y, (y-x)^2)$ then
$g_i \equiv g_i' \pmod{I^{(*)}}$ where $g_i'$ is the Laurent polynomial
obtained from $g_i$ by replacing each $z^c$ with $\alpha x^c + \beta y^c$,
and so is a product of Laurent polynomials of the form
$x^ay^b(\alpha x^c + \beta y^c) - 1$. The following is easy
to verify.
\begin{claim}
\label{clm:ab-irrational}
For $a,b,c\in\ints$, $\alpha, \beta\in\comps$ irrational satisfying
$\alpha + \beta = 1$ we have $x^ay^b(\alpha x^c + \beta y^c) - 1$ is
(i) divisible by $y-x$ in $\comps[x,y,z]^{\pm}$ if and only if
$a + b + c = 0$, and
(ii) never divisible by $(y-x)^2$.
\end{claim}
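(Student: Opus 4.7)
The plan is to reduce both parts to a substitution-and-derivative test in the UFD $\comps[x,y]^{\pm}$. Since the Laurent polynomial $f(x,y) := x^ay^b(\alpha x^c+\beta y^c)-1$ does not involve $z$, divisibility by $y-x$ or $(y-x)^2$ in $\comps[x,y,z]^{\pm}$ is equivalent to the corresponding divisibility in $\comps[x,y]^{\pm}$. The element $y-x$ is prime there, with residue ring isomorphic to $\comps[x]^{\pm}$; so $(y-x)\mid f$ iff $f(x,x)=0$, and (applying Leibniz to $(y-x)h = f$) $(y-x)^2\mid f$ iff additionally $(\partial f/\partial y)(x,x)=0$.

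For part (i), I would substitute $y=x$ and use $\alpha+\beta = 1$ to obtain $f(x,x) = (\alpha+\beta)x^{a+b+c} - 1 = x^{a+b+c}-1$, which vanishes in $\comps[x]^{\pm}$ exactly when $a+b+c=0$.

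For part (ii), the case $a+b+c\neq 0$ is handled by (i). Otherwise, writing $f = \alpha x^{a+c}y^b + \beta x^a y^{b+c} - 1$ and differentiating in $y$, at $y=x$ I compute
\[
\left.\frac{\partial f}{\partial y}\right|_{y=x} = \bigl(\alpha b + \beta(b+c)\bigr)x^{a+b+c-1} = (b+\beta c)x^{-1}
\]
using $\alpha+\beta=1$ and $a+b+c=0$. For $(y-x)^2\mid f$ this quantity must vanish, forcing $b+\beta c = 0$ in $\comps$. Since $\beta$ is irrational while $b,c\in\ints$, this compels $c=0$, hence $b=0$, and then $a=0$; in that case $f$ is itself the zero Laurent polynomial, corresponding to the degenerate factor $x^0y^0z^0-1=0$ which is excluded from any meaningful product expression in Example~\ref{exa:zaxby}. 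The main subtlety to check carefully is the irrationality-forces-triviality step, but once the derivative criterion for $(y-x)^2$-divisibility is in place, the remaining arithmetic is elementary.
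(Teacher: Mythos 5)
Your proof is correct, and since the paper declines to prove the claim (offering only ``easy to verify''), there is no in-paper argument to compare against; the substitution/derivative criterion for $(y-x)$- and $(y-x)^2$-divisibility in the UFD $\comps[x,y]^{\pm}$ is exactly the natural route. The computation $f(x,x)=x^{a+b+c}-1$ for part (i), and $(\partial f/\partial y)|_{y=x}=(b+\beta c)\,x^{a+b+c-1}$ for part (ii), are both right, and the irrationality of $\beta$ then forces $c=0$, $b=0$, $a=0$. You were also right to flag the degenerate case $a=b=c=0$: there $f\equiv 0$, which is trivially divisible by $(y-x)^2$, so the claim as literally stated has a vacuous exception. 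This is harmless in the application (Example~\ref{exa:zaxby}) because that triple corresponds to the zero binomial $x^0y^0z^0-1=0$, which cannot appear as a factor of a nonzero $g_i$; but the claim statement would be cleaner if it excluded $(a,b,c)=(0,0,0)$ explicitly.

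Two small remarks on presentation. First, it is worth saying in one line why $y-x$ remains prime after localizing: it is prime in the UFD $\comps[x,y]$ and is not an associate of any monomial, hence is not a unit in $\comps[x,y]^{\pm}$, and localization preserves primeness of non-units. Second, the derivative criterion deserves its one-sentence justification (Leibniz on $f=(y-x)h$ gives $(\partial f/\partial y)|_{y=x}=h(x,x)$, and $h(x,x)=0$ iff $(y-x)\mid h$), which you gesture at but do not quite write out; including it makes the step airtight in the Laurent setting.
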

Since each $g_i'$ is divisible by $(y-x)^2$ for each $i$, then
if we further assume both $\alpha$ and $\beta$ to be irrational,
we have by Claim~\ref{clm:ab-irrational} that 
$g_i'$ has at least two factors of the form
$x^ay^b(\alpha x^c + \beta y^c) - 1$ where $a + b + c = 0$.
By assumption $(z - \alpha x - \beta y, (y-x)^2) = (g_1,\ldots,g_n)$
and so $z - \alpha x - \beta y \in (g_1,\ldots,g_n)$, so
$z - \alpha x - \beta y = \sum_{i=1}^n f_ig_i$ for some
$f_1,\ldots,f_n\in \comps[x,y,z]^{\pm}$. By letting $z = y$
we then obtain an identity
\begin{equation}
\label{eqn:xyy}
\alpha(y-x) = \sum_{i=1}^n f_i(x,y,y)g_i(x,y,y),
\end{equation}
in $\comps[x,y]^{\pm}$ where each $g_i$ has as at least two
factors of the form $x^ay^by^c - 1 = x^ay^{b+c} - 1$ where $a+b+c = 0$.
Since $x^ay^{b+c} - 1 = y^{-a}(x^a - y^a)$ is divisible by $y-x$
then the right hand side of (\ref{eqn:xyy}) is always divisible by
$(y-x)^2$ and so $\alpha(y-x) = (y-x)^2h(x,y)$ for some
$h\in\comps[x,y]^{\pm}$ which is a contradiction, since
$\comps[x,y]^{\pm}$ is a UFD with primes from those of $\comps[x,y]$
that are not contained in the semigroup $[x,y]$. Hence,
$I^{(*)} = (z - \alpha x - \beta y, (y-x)^2)$ is power-closed
in $\comps[x,y,z]^{\pm}$ and is not generated by products of binomials.
\end{example}
\begin{example}
\label{exa:zaxby-cont}  
Continuing with the ideal
$I = (z - \alpha x - \beta y)\subseteq \comps[x,y,z]^{\pm}$ from
Example~\ref{exa:zaxby} here above, where $\alpha + \beta = 1$, and let
$J = (z - \gamma x - \delta y)\subseteq \comps[x,y,z]^{\pm}$ be another
such ideal also with $\gamma + \delta = 1$. From the above
Example~\ref{exa:zaxby} we have $I^{(*)} = (z - \alpha x - \beta y, (y-x)^2)$,
$J^{(*)} = (z - \gamma x - \delta y, (y-x)^2)$ and hence
$(y-x)^2 \in I^{(*)}\cap J^{(*)}$. 

Since $R^{\pm} = \comps[x,y,z]^{\pm}$ is a UFD and $z - \alpha x - \beta y$,
$z - \gamma x - \delta y$ are two distinct irreducible elements in $R^{\pm}$
then $I\cap J = IJ = ((z - \alpha x - \beta y)(z - \gamma x - \delta y))$.
By Theorem~\ref{thm:pm-gen2} we obtain its power-closure
\[
(I\cap J)^{(*)} =
((z - \alpha x - \beta y)(z - \gamma x - \delta y),\ldots,
(z^6 - \alpha x^6 - \beta y^6)(z^6 - \gamma x^6 - \delta y^6))
\]
as an ideal in $R^{\pm}$. We now argue that $(y-2)^2\not\in (I\cap J)^{(*)}$
as follows:

Suppose $(y-2)^2\in (I\cap J)^{(*)}$. In that case we have an equation
\[
(y-x)^2 = \sum_{i=1}^6f_i(z^i - \alpha x^i
- \beta y^i)(z^i - \gamma x^i - \delta y^i),
\]
for some $f_1,\ldots,f_6 \in R^{\pm}$. Evaluating at $z = 1$ we the
obtain an equation
\[
(y-x)^2 = \sum_{i=1}^6f_i(1 - \alpha x^i
- \beta y^i)(1 - \gamma x^i - \delta y^i),
\]
for some $f_1,\ldots,f_6 \in {\comps}[x,y]^{\pm}$\footnote{Note that
  these two equations are equivalent since we can obtain the first
  one by homogenizing the second one.}.
We note that each $f_i$ can be assumed to be a sum of monomials in
$x$ and $y$ where the degree in each of the two variables is within
the range $\{-12,-11,\ldots,1,2\}$, and hence we obtain an equation
\begin{equation}
\label{eqn:Macaulay2}
(xy)^{12}(y-x)^2 = \sum_{i=1}^6g_i(1 - \alpha x^i
- \beta y^i)(1 - \gamma x^i - \delta y^i),
\end{equation}
where each $g_i = (xy)^{12}f_i\in{\comps}[x,y]$ is a polynomial.
By a quick computation using the software package Macaulay2~\cite{Macaulay2},
which uses an incorporated Gr\"{o}bner basis, yields that
\[
(xy)^{12}(y-x)^2\not\in 
((z - \alpha x - \beta y)(z - \gamma x - \delta y),\ldots,
(z^6 - \alpha x^6 - \beta y^6)(z^6 - \gamma x^6 - \delta y^6))
\]
as an ideal in the polynomial ring ${\comps}[x,y]$. Hence,
(\ref{eqn:Macaulay2}) has no solutions in polynomials
$g_1,\ldots,g_6$. We can therefore conclude that
$(y-2)^2\not\in (I\cap J)^{(*)}$ as an ideal in
$R^{\pm} = {\comps}[x,y,z]^{\pm}$ in this case.
Hence, there are ideals $I$ and $J$ in the Laurent polynomial
ring $R^{\pm}$ as well such that
$(I\cap J)^{(*)} \neq I^{(*)}\cap J^{(*)}$. 
\end{example}
Note that Example~\ref{exa:IplusJ-circ} is valid in the Laurent
polynomial ring $R^{\pm} = {\comps}[x]^{\pm}$ as well as the
polynomial ring $R = {\comps}[x]$. Example~\ref{exa:IplusJ-circ},
together with Example~\ref{exa:yax-ybx} and the above
Example~\ref{exa:zaxby-cont} yield the following where we
gather these observations.
\begin{observation}
\label{obs:strict}
In both the polynomial ring $R = {\comps}[x_1,\ldots,x_d]$ and the Laurent
ring $R^{\pm}= \comps[x_1,\ldots,x_d]^{\pm}$ we have:
(i) there are ideals $I$ and $J$ such that
$I^{(\circ)} + J^{(\circ)}\subset (I+J)^{(\circ)}$ is a strict containment, and
(ii) there are ideals $I$ and $J$ such that
$(I\cap J)^{(*)} \subset I^{(*)}\cap J^{(*)}$ is a strict containment. 
\end{observation}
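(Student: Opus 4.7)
The plan is to assemble the three worked examples already developed in the paper; together they cover both containment failures in both the polynomial and Laurent polynomial settings, and no fresh computation is required.

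For part (i), I would invoke Example~\ref{exa:IplusJ-circ}: with $\alpha\in\comps$ whose powers form an infinite subgroup of $\comps^*$, the ideals $I = (x - \alpha)$ and $J = (x + \alpha)$ satisfy $I^{(\circ)} = J^{(\circ)} = \{0\}$ while $I + J$ contains the unit $2\alpha$ and so equals $R$, forcing $(I+J)^{(\circ)} = R$. The paragraph immediately preceding the statement explicitly notes that this same example is valid in the Laurent polynomial ring $\comps[x]^{\pm}$ as well, so a single citation handles both rings at once.

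For part (ii) in the polynomial ring $R$, I would cite Example~\ref{exa:yax-ybx}: for $\alpha\neq\beta$ in $\comps$, neither a root of unity, the ideals $I = (y - \alpha x)$ and $J = (y - \beta x)$ in $\comps[x,y]$ satisfy $I^{(*)}\cap J^{(*)} = (x,y)^2$, while Proposition~\ref{prp:*-gen} produces explicit generators of $(I\cap J)^{(*)}$, each a product of two linear forms, so that the degree-two part of $(I\cap J)^{(*)}$ is spanned by $(y-\alpha x)(y-\beta x)$ and in particular does not contain $x^2$. For part (ii) in the Laurent polynomial ring $R^{\pm}$, I would cite Example~\ref{exa:zaxby-cont}: taking irrational $\alpha,\beta,\gamma,\delta\in\comps$ with $\alpha+\beta=\gamma+\delta=1$, Example~\ref{exa:zaxby} places $(y-x)^2$ in both $I^{(*)}$ and $J^{(*)}$, while the Macaulay2 Gr\"obner-basis calculation cited in Example~\ref{exa:zaxby-cont} certifies $(y-x)^2\notin (I\cap J)^{(*)}$.

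Since the statement is a bookkeeping summary of results already in hand, there is no substantive obstacle. The only small point I would make explicit is that in each case the two sides are not merely unequal but strictly comparable: the reverse inclusions $I^{(\circ)} + J^{(\circ)}\subseteq (I+J)^{(\circ)}$ and $(I\cap J)^{(*)}\subseteq I^{(*)}\cap J^{(*)}$ are already recorded in the remarks accompanying Propositions~\ref{prp:closure} and~\ref{prp:replace}, so that the failure of equality furnished by each example automatically upgrades to strict containment in the direction asserted.
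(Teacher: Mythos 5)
Your proposal is correct and matches the paper's own treatment exactly: the paper disposes of Observation~\ref{obs:strict} by pointing to Examples~\ref{exa:IplusJ-circ}, \ref{exa:yax-ybx}, and~\ref{exa:zaxby-cont} (noting that the first transfers verbatim to the Laurent ring), with the non-strict inclusions already supplied by Propositions~\ref{prp:closure} and~\ref{prp:replace}. Nothing to add.
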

Recall that by Proposition~\ref{prp:replace} we do have equalities
$I^{(*)} + J^{(*)} = (I + J)^{(*)}$ and
$(I\cap J)^{(\circ)} = I^{(\circ)} \cap J^{(\circ)}$.

Going back to general results,
Observation~\ref{obs:principal} has the following consequence.
\begin{proposition}
\label{prp:principal}
If $f = \sum_{i=1}^ka_i\tilde{x}^{\tilde{p}_i} \in R$ and
$I = (f)$ is a power-closed ideal of $R^{\pm}$, so $I = I^{(*)}$, then
$f$ divides the polynomial 
\[
(\tilde{x}^{\tilde{p}_k} - \tilde{x}^{\tilde{p}_1})
  (\tilde{x}^{\tilde{p}_k} - \tilde{x}^{\tilde{p}_2})\cdots
  (\tilde{x}^{\tilde{p}_k} - \tilde{x}^{\tilde{p}_{k-1}}).
\]
Likewise, if $I = (f)$ is a power-closed ideal of $R$, then
$f$ divides the polynomial 
\[
\tilde{x}^{\tilde{p}_k}(\tilde{x}^{\tilde{p}_k} - \tilde{x}^{\tilde{p}_1})
  (\tilde{x}^{\tilde{p}_k} - \tilde{x}^{\tilde{p}_2})\cdots
  (\tilde{x}^{\tilde{p}_k} - \tilde{x}^{\tilde{p}_{k-1}}).
\]
\end{proposition}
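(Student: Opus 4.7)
The proof plan is to read both claims directly off Observation~\ref{obs:principal}. That observation gives an explicit generating set for $(f)^{(*)}$ in $R$, whose final generator is
\[
a_k\tilde{x}^{\tilde{p}_k}(\tilde{x}^{\tilde{p}_k} - \tilde{x}^{\tilde{p}_1})(\tilde{x}^{\tilde{p}_k} - \tilde{x}^{\tilde{p}_2})\cdots(\tilde{x}^{\tilde{p}_k} - \tilde{x}^{\tilde{p}_{k-1}}).
\]
If $I = (f)$ is power-closed in $R$, then $(f) = (f)^{(*)}$, so this generator lies in $(f)$, meaning $f$ divides it. Minimality of the representation $f = \sum_{i=1}^k a_i\tilde{x}^{\tilde{p}_i}$ (i.e., $k = \lambda(f)$) forces $a_k\neq 0$, so $a_k$ is a unit in $R$ and can be canceled. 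This yields the ``Likewise'' statement: $f$ divides $\tilde{x}^{\tilde{p}_k}\prod_{i=1}^{k-1}(\tilde{x}^{\tilde{p}_k} - \tilde{x}^{\tilde{p}_i})$ in $R$.

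For the Laurent case, I would first observe that the same generating set for $(f)^{(*)}$ is valid inside $R^{\pm}$. Indeed, Theorem~\ref{thm:pm-gen2} applied to the principal ideal $(f)$ gives $(f)^{(*)} = (f,f^{(2)},\ldots,f^{(\lambda(f))})$ in $R^{\pm}$, and the Gauss-Jordan--style reduction that produces the alternative list in Observation~\ref{obs:principal} uses only $R$-linear combinations (no inversion of monomials), so the very same alternative list generates $(f)^{(*)}$ inside $R^{\pm}$. Applying the argument of the previous paragraph in $R^{\pm}$, we conclude $f\mid a_k\tilde{x}^{\tilde{p}_k}\prod_{i<k}(\tilde{x}^{\tilde{p}_k} - \tilde{x}^{\tilde{p}_i})$; but now $\tilde{x}^{\tilde{p}_k}$ is also a unit, so canceling it as well yields $f\mid \prod_{i<k}(\tilde{x}^{\tilde{p}_k} - \tilde{x}^{\tilde{p}_i})$, which is the first claim.

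There is no serious obstacle here: the proposition is essentially the specialization, for principal power-closed ideals, of the explicit form given in Observation~\ref{obs:principal}. The only item warranting even a brief comment is the verification that Observation~\ref{obs:principal}'s formula for $(f)^{(*)}$ remains valid in $R^{\pm}$, which is immediate from Theorem~\ref{thm:pm-gen2} together with the fact that the reduction involved never divides by a monomial.
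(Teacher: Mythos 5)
Your proof is correct and matches the paper's intent exactly: the paper introduces Proposition~\ref{prp:principal} with the remark that ``Observation~\ref{obs:principal} has the following consequence,'' and your argument simply makes that consequence explicit by noting that the final generator $a_k\tilde{x}^{\tilde{p}_k}\prod_{i<k}(\tilde{x}^{\tilde{p}_k}-\tilde{x}^{\tilde{p}_i})$ must lie in $(f)$ once $(f)=(f)^{(*)}$, then cancelling the units available in $R$ versus $R^{\pm}$. The only slight overkill is the appeal to Theorem~\ref{thm:pm-gen2} for the Laurent case; it suffices to observe that the generator in question already lies in $(f)^{(*)}$ as computed in $R$, and $(f)^{(*)}_R\subseteq(f)^{(*)}_{R^{\pm}}$.
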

Consider now briefly the case of a principal power-closed 
ideal $I = (f)$, where $f = \sum_{i=1}^ka_i\tilde{x}^{\tilde{p}_i}$ 
is an irreducible element.
By Proposition~\ref{prp:principal} the expression
$\tilde{x}^{\tilde{p}_k}(\tilde{x}^{\tilde{p}_k} - \tilde{x}^{\tilde{p}_1})
  (\tilde{x}^{\tilde{p}_k} - \tilde{x}^{\tilde{p}_2})\cdots
  (\tilde{x}^{\tilde{p}_k} - \tilde{x}^{\tilde{p}_{k-1}})$
is divisible by the polynomial $f$.
We can assume we have a term order on $[x_1,\ldots,x_d]$
such that $\tilde{x}^{\tilde{p}_1} < \cdots < \tilde{x}^{\tilde{p}_d}$.
Since the leading terms of $f$ and each of the binomials
$\tilde{x}^{\tilde{p}_k} - \tilde{x}^{\tilde{p}_i}$ is 
$\tilde{x}^{\tilde{p}_k}$ we conclude that 
$f = \tilde{x}^{\tilde{p}_k} - \tilde{x}^{\tilde{p}_i}$ for some 
$i\in \{1,\ldots,k-1\}$. Hence we have the following.
\begin{observation}
\label{obs:irreducible}
If the principal ideal $I = (f)$ is generated by an irreducible
element $f\in R^{\pm}$ and is power-closed, then $f$ must itself
be a binomial.
\end{observation}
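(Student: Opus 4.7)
The plan is to combine Proposition~\ref{prp:principal} with the fact that $R^{\pm}$ is a UFD together with the observation that the irreducible factors of binomials in $R^{\pm}$ are themselves binomials. Writing $f = \sum_{i=1}^{k} a_i \tilde{x}^{\tilde{p}_i}$, Proposition~\ref{prp:principal} (applied in the Laurent setting, possibly after multiplying $f$ by a unit of $R^{\pm}$ to land in $R$) gives that $f$ divides the product
\[
B = \prod_{i=1}^{k-1}\bigl(\tilde{x}^{\tilde{p}_k} - \tilde{x}^{\tilde{p}_i}\bigr).
\]
Since $R^{\pm}$ is a UFD and $f$ is irreducible, $f$ must divide some irreducible factor of some single binomial $\tilde{x}^{\tilde{p}_k} - \tilde{x}^{\tilde{p}_j}$, and the task is reduced to understanding the irreducible factorization of one such binomial in $R^{\pm}$.

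The next step is to perform that factorization. Pulling out the unit monomial gives
\[
\tilde{x}^{\tilde{p}_k} - \tilde{x}^{\tilde{p}_j} = \tilde{x}^{\tilde{p}_j}\bigl(\tilde{x}^{\tilde{p}_k - \tilde{p}_j} - 1\bigr).
\]
Letting $n = \gcd$ of the entries of $\tilde{p}_k - \tilde{p}_j$ and $\tilde{r} = (\tilde{p}_k - \tilde{p}_j)/n \in \ints^d$, so that $\tilde{r}$ is primitive, one obtains
\[
\tilde{x}^{\tilde{p}_k - \tilde{p}_j} - 1 = \bigl(\tilde{x}^{\tilde{r}}\bigr)^n - 1 = \prod_{m=0}^{n-1}\bigl(\tilde{x}^{\tilde{r}} - \zeta_n^m\bigr),
\]
where $\zeta_n$ is a primitive $n$-th root of unity.

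The key step is then to verify that each factor $\tilde{x}^{\tilde{r}} - \zeta_n^m$ is irreducible in $R^{\pm}$. Because $\tilde{r}$ is primitive, it extends to a basis of $\ints^d$, and the induced $\ints$-linear change of coordinates is a ring automorphism of $R^{\pm}$ sending $\tilde{x}^{\tilde{r}}$ to $x_1$; under this isomorphism each factor becomes the linear polynomial $x_1 - \zeta_n^m$, which is plainly irreducible. Consequently $f$ is an associate in $R^{\pm}$ of some $\tilde{x}^{\tilde{r}} - \zeta_n^m$; since the units of $R^{\pm}$ are precisely the nonzero scalar multiples of monomials, multiplying a binomial by such a unit yields another binomial, so $f$ itself is a binomial.

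The main obstacle is the irreducibility claim for $\tilde{x}^{\tilde{r}} - \zeta_n^m$, which is exactly where the primitivity of $\tilde{r}$ and the monomial change-of-variables step are essential; once that is in place the rest is a direct application of the UFD property of $R^{\pm}$ and Proposition~\ref{prp:principal}.
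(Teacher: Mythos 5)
Your factorization route is a legitimate alternative to the paper's argument, but it stops one step short of the stated conclusion. Using the UFD property you correctly reduce to the claim that $f$ is an associate in $R^{\pm}$ of some irreducible factor $\tilde{x}^{\tilde{r}} - \zeta_n^m$ of one of the binomials $\tilde{x}^{\tilde{p}_k} - \tilde{x}^{\tilde{p}_j}$, where $\zeta_n$ is a primitive $n$-th root of unity. However, such a factor is a \emph{binomial} in the sense the paper uses --- a difference of two monomials, which up to a unit of $R^{\pm}$ has the form $\tilde{x}^{\tilde{r}} - 1$ (see the opening of Section~\ref{sec:one-var}) --- only when $\zeta_n^m = 1$. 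Your last sentence takes for granted that $\tilde{x}^{\tilde{r}} - \zeta_n^m$ is already a binomial before multiplying by a unit, which fails whenever $\zeta_n^m \ne 1$, and nothing in your argument rules that case out.

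The paper closes this gap by a leading-term comparison rather than a full irreducible factorization: choosing a term order with $\tilde{x}^{\tilde{p}_1} < \cdots < \tilde{x}^{\tilde{p}_k}$, the leading monomial of $f$ is $\tilde{x}^{\tilde{p}_k}$, which is also the leading monomial of each binomial $\tilde{x}^{\tilde{p}_k} - \tilde{x}^{\tilde{p}_i}$. Since the irreducible $f$ divides one of these binomials, the quotient has leading monomial $1$ and so is a nonzero constant; thus $f$ is a scalar multiple of the \emph{entire} binomial $\tilde{x}^{\tilde{p}_k} - \tilde{x}^{\tilde{p}_j}$. (In particular that binomial is irreducible, so in your notation $n=1$ and the factorization is trivial.) Alternatively, you can repair the argument within your own framework: under the monomial change of coordinates you already use, sending $\tilde{x}^{\tilde{r}}$ to $x_1$, power-closedness of $(\tilde{x}^{\tilde{r}} - \zeta)$ requires $(x_1 - \zeta) \mid (x_1^2 - \zeta)$, hence $\zeta^2 = \zeta$ and so $\zeta = 1$. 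Either patch makes the proof complete; the paper's leading-term shortcut is more economical because it bypasses the explicit factorization of the binomial altogether.
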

In order to investigate principal power-closed ideals that are
generated by a general reducible element, we will first consider the case
$d=1$ in detail in the following section. 

\section{The one variable case}
\label{sec:one-var}

In this section we focus on the case when $d=1$;
$R = \comps[x]$ and $R^{\pm} = \comps[x]^{\pm}$. This case is
special as it is the only case for which $R$ and $R^{\pm}$
are PIDs. Every binomial in $R$ has the form $x^n - x^m$ for some
nonnegative integers $m$ and $n$ and every binomial in
$R^{\pm}$ has, up to a unit, the form $x^n - 1$ for some $n$.

\subsection{Power-polynomials}
\label{subsec:MP}

A polynomial in $R$ or $R^{\pm}$ that generates a power-closed
ideal has some nice properties. This will be our main topic in this
subsection.
\begin{definition}
\label{def:Minks-poly}
A polynomial $f = f(x)$ in $\comps[x]\subseteq \comps[x]^{\pm}$
is {\em powered} if $f$ divides $f^{(i)}$ for every $i\in \nats$.
A powered polynomial will be called a {\em power-polynomial}.
\end{definition}
Note that if $f(x)$ divides $f(x^i)$, then for each $n\in\nats$
we have that $f(x^n)$ divides $f(x^{ni})$, and hence we have
the following claim.
\begin{claim}
\label{clm:primes}
A polynomial $f = f(x)$ in $\comps[x] \subseteq \comps[x]^{\pm}$ is
a power-polynomial
if and only if $f(x)$ divides $f(x^p)$ for every prime number $p$.
\end{claim}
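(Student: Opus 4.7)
The forward direction is immediate from the definition, so the content is in the converse: assuming $f(x)\mid f(x^p)$ for every prime $p$, I want to show $f(x)\mid f(x^i)$ for every $i\in\nats$. The plan is induction on $\Omega(i)$, the number of prime factors of $i$ counted with multiplicity. The base case $i=1$ is trivial since $f(x^1)=f(x)$.

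The key mechanical fact is the substitution principle already flagged in the paper: if $f(x)\mid f(x^i)$, say $f(x^i)=f(x)g(x)$, then substituting $x\mapsto x^n$ into this polynomial identity gives $f(x^{ni})=f(x^n)g(x^n)$, so $f(x^n)\mid f(x^{ni})$. This is the only nontrivial ingredient, and it is essentially free since it is just substitution in $\comps[x]$.

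For the inductive step, write $i=jp$ where $p$ is a prime factor of $i$, so $\Omega(j)=\Omega(i)-1$. By the inductive hypothesis $f(x)\mid f(x^j)$, and applying the substitution principle with $n=p$ gives $f(x^p)\mid f(x^{jp})=f(x^i)$. By hypothesis on primes, $f(x)\mid f(x^p)$, and composing the two divisibilities yields $f(x)\mid f(x^i)$, completing the induction.

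There is no real obstacle — the only subtlety is noticing that the substitution $x\mapsto x^n$ preserves divisibility in $\comps[x]$ (and equally in $\comps[x]^{\pm}$, since $x^n$ is a unit-times-polynomial in either ring), which is what lets the multiplicative structure of $\nats$ reduce the whole family of divisibilities to the prime case. The same argument works verbatim in $\comps[x]^{\pm}$.
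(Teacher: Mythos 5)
Your proof is correct and uses the same key observation the paper relies on (that $f(x)\mid f(x^i)$ implies $f(x^n)\mid f(x^{ni})$ by substitution), merely making the implicit induction over prime factorizations explicit. No meaningful difference in approach.
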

\begin{example}
\label{exa:ab1}  
Let $a,b\in \nats$ be relatively prime positive integers
and let $f(x) = (x^a-1)(x^b-1)/(x-1) \in R^{\pm}$. Clearly, $f(x)$ is not
a product of binomials in $R^{\pm}$. If $p$ is a prime number, then either
$a$ or $b$ is not divisible by $p$, say $a$. Note that $x^b-1$
divides $x^{pb}-1$ and $x^a-1 =\prod_{i=0}^{a-1}(x - \rho^i)$
where $\rho\in \comps$ is a primitive $a$-th root of unity, so
$(x^a-1)/(x-1) = \prod_{i=1}^{a-1}(x - \rho^i)$.
Since $p$ does not divide $a$ then $\rho^p$ is also a primitive $a$-th
root of unity, so
$\{\rho, \rho^2, \ldots, \rho^{a-1}\}
=\{\rho^p, \rho^{2p}, \ldots, \rho^{(a-1)p}\}$.
Therefore
\[
\frac{x^{pa}-1}{x^p-1} = \prod_{i=1}^{a-1}(x^p - \rho^i)
= \prod_{i=1}^{a-1}(x^p - \rho^{pi}),
\]
which is divisible by $(x^a-1)/(x-1)$ since each $x^p - \rho^{pi}$
is divisible by $x-\rho^i$. Hence, $f(x)$ divides $f(x^p)$ for each
prime that does not divide $a$. By symmetry, if $p$ is a prime that
does not divide $b$ then also $f(x)$ divides $f(x^p)$. So for any prime
$p$, $f(x)$ divides $f(x^p)$. By Claim~\ref{clm:primes}
we therefore have that $f(x)$ is a power-polynomial.
\end{example}
The following observation therefore applies.
\begin{observation}
\label{obs:not-MP}
Both rings $R = \comps[x]$ and $R^{\pm} = \comps[x]^{\pm}$
contain power-polynomials that are not products of
binomials of the form $x^n-1$. Consequently, there are
principal power-closed ideals of both $R$ and $R^{\pm}$ that
are not generated by products of binomials.
\end{observation}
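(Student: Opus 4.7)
My plan is to apply Example~\ref{exa:ab1} directly. Take $a, b \in \nats$ coprime with $a, b \ge 2$, and set $f(x) = (x^a - 1)(x^b - 1)/(x - 1)$, which is an honest polynomial in $R = \comps[x] \subseteq R^{\pm}$. The example already shows $f$ is a power-polynomial; since $f(0) = -1 \ne 0$, divisibility of a polynomial by $f$ in $R^{\pm}$ coincides with divisibility in $R$, so $(f)$ is a principal power-closed ideal in both rings. What remains is to verify the parenthetical claim of that example: that $f$ is not a product of binomials of the form $x^n - 1$ in either ring, even after allowing a unit of the ambient ring.

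I would argue this in two steps. First, an order-of-vanishing check at $x = 1$: writing $f = (1 + x + \cdots + x^{a-1})(x^b - 1)$, the first factor equals $a \ne 0$ at $x = 1$ while the second has $1$ as a simple root, so $f$ has a simple zero at $x = 1$. On the other hand, any expression $u \prod_{i=1}^k (x^{n_i} - 1)$ with $u$ a unit and each $n_i > 0$ has a zero of order exactly $k$ at $x = 1$. This forces $k = 1$, and because $f(0) \ne 0$ the unit $u$ cannot absorb any factor $x^m$ even in $R^{\pm}$; thus $u$ is a scalar and $f = c(x^n - 1)$ with $n = \deg f = a + b - 1$.

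Second, a root-of-unity obstruction rules out this remaining case. A primitive $a$-th root of unity $\zeta$ is a root of $(x^a - 1)/(x - 1)$ and therefore of $f$, so $\zeta^n = 1$ forces $a \mid n = a + b - 1$, i.e., $a \mid b - 1$; symmetrically $b \mid a - 1$. Combined with $a, b \ge 2$ this demands $a \le b - 1$ and $b \le a - 1$ simultaneously, a contradiction. Hence $f$ is a power-polynomial that is not a product of binomials $x^n - 1$ in either ring, and therefore the principal power-closed ideal $(f)$ admits no generator of that form. The main point where care is needed is tracking units in the Laurent ring, but $f(0) \ne 0$ handles this cleanly.
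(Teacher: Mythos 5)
Your proof uses exactly the same example as the paper — Example~\ref{exa:ab1}'s polynomial $f(x) = (x^a-1)(x^b-1)/(x-1)$ — so the approach is the same. The difference is one of rigor: the paper simply asserts that $f$ is ``clearly'' not a product of binomials, whereas you verify this with the order-of-vanishing count at $x=1$ (forcing a single factor $x^n-1$) followed by the root-of-unity divisibility obstruction ($a \mid b-1$ and $b \mid a-1$). You also correctly add the hypothesis $a,b \ge 2$, which is genuinely needed: with $a=1$ the quotient collapses to $x^b-1$, which \emph{is} a binomial, so the paper's hypothesis ``relatively prime positive integers'' is slightly too weak for the ``clearly'' to hold. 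Your handling of units in $R^{\pm}$ via $f(0)\ne 0$ is correct and appropriately careful.
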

Suppose $f$ and $g$ are both powered and consider $l= \lcm(f,g)$,
their least common multiple in $R$ or $R^{\pm}$. Let 
$(x - r)^p$ be the largest power of $x-r$ occurring in $l(x)$ 
in its unique factorization into prime factors over $\comps$. 
Then $(x-r)^p$
is a factor either in $f$ or $g$, say $f$. Since $f$ is powered then
$(x-r)^p$ divides $f(x^i)$ for any positive integer $i$, and since
$f(x^i)$ divides $l(x^i)$ for any $i$, then $(x-r)^p$ divides
$l(x^i)$, for any positive integer $i$. Since this holds for any
prime factor $x-r$ of $l(x)$ we have the following.
\begin{proposition}
\label{prp:lcm}
If both $f$ and $g$ are powered, then $\lcm(f,g)$ is also powered.
\end{proposition}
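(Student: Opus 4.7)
The plan is to reduce the divisibility $l \mid l(x^i)$ to a prime-power analysis, exploiting that both $R = \comps[x]$ and $R^{\pm} = \comps[x]^{\pm}$ are UFDs (in fact PIDs) whose irreducible elements, up to units, are linear polynomials $x - r$ with $r \in \comps$ (and $r \neq 0$ in $R^{\pm}$, where $x$ is a unit). So $l = \lcm(f,g)$ factors uniquely into such linear primes, and the multiplicity of each prime in $l$ equals the maximum of its multiplicities in $f$ and $g$.

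First, I would fix an arbitrary $i \in \nats$ and isolate an arbitrary maximal prime power $(x-r)^p$ dividing $l$. By the $\lcm$ property, $p = \max(v_{x-r}(f),v_{x-r}(g))$, so this maximum is attained in either $f$ or $g$; without loss of generality assume $(x-r)^p \mid f$.

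Next, I would chain two divisibilities. Since $f$ is a power-polynomial, $f(x) \mid f(x^i)$, giving $(x-r)^p \mid f(x^i)$. Since $f \mid l$, applying the ring endomorphism $x \mapsto x^i$ preserves divisibility and yields $f(x^i) \mid l(x^i)$. Composing these gives $(x-r)^p \mid l(x^i)$.

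Finally, because distinct linear factors are pairwise coprime in the UFD, the product over all maximal prime powers $(x-r)^p$ appearing in $l$ divides $l(x^i)$. That product equals $l$ up to a unit, so $l \mid l(x^i)$ for every $i \in \nats$, i.e., $l$ is powered. I do not foresee a substantive obstacle; the argument is essentially formal, resting only on the UFD structure and the transitivity of divisibility. The only minor care needed is remembering that in $R^{\pm}$ the generator $x$ is a unit and so contributes no irreducible factor, but this does not affect any step.
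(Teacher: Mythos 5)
Your proof is correct and follows essentially the same route as the paper's: isolate each maximal prime power $(x-r)^p$ in $l = \lcm(f,g)$, note it divides one of $f$ or $g$ (say $f$), chain $(x-r)^p \mid f(x) \mid f(x^i) \mid l(x^i)$ using the powered hypothesis and the substitution homomorphism, and conclude by coprimality of the distinct prime powers. The small remark about $x$ being a unit in $R^{\pm}$ is a nice touch but, as you note, does not change the argument.
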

{\sc Note:} Since any binomial $x^n-1$ is clearly powered, and
for any positive integers $a$ and $b$
we have $\gcd(x^a-1,x^b-1) = x^{\gcd(a,b)}-1$, we see
that for relatively prime $a$ and $b$ we have
\[
\lcm(x^a-1,x^b-1) = \frac{(x^a-1)(x^b-1)}{x^{\gcd(a,b)}-1}
= \frac{(x^a-1)(x^b-1)}{x-1} = f(x)
\]
from the above Example~\ref{exa:ab1}.
Hence, we also see from Proposition~\ref{prp:lcm}
that $f(x)$ is indeed powered.

Suppose $f$ is a power-polynomial. By
Proposition~\ref{prp:principal}, in $R^\pm$,  $f$ must divide
the polynomial $(x^{p_1} - 1)(x^{p_2} - 1)\cdots (x^{p_k} - 1)$
for some positive integers $p_1,\ldots,p_k$. We have in particular
the following.
\begin{observation}
\label{obs:roots-of-unity}
Every root of a power-polynomial $f \in R^{\pm}$ is a root
of unity.  Therefore any such polynomial is, up to multiplication
by a unit in $R^\pm$,  of the form
\[
f(x) = (x-\rho_1)^{a_1}(x-\rho_2)^{a_2}\cdots(x-\rho_k)^{a_k}
\]
for some distinct roots of unity $\rho_1,\ldots,\rho_k$ 
and  positive integers $a_1,\ldots,a_k$.
\end{observation}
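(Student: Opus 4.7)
The plan is to derive this statement as an essentially immediate consequence of Proposition~\ref{prp:principal}. A power-polynomial $f$ satisfies $f \mid f^{(i)}$ for every $i$, which means $f^{(i)}\in(f)$, and this suffices to force $(f)$ to be a power-closed ideal of $R^{\pm}$: an arbitrary element $gf\in(f)$ satisfies $(gf)^{(i)}=g^{(i)}f^{(i)}\in(f)$, since $(f)$ already absorbs $f^{(i)}$. Thus Proposition~\ref{prp:principal} applies. Writing $f=\sum_{i=1}^{k}a_i x^{p_i}$ in its minimum expression as a sum of monomials with $p_1<p_2<\cdots<p_k$, the proposition yields
\[
f \mid (x^{p_k}-x^{p_1})(x^{p_k}-x^{p_2})\cdots(x^{p_k}-x^{p_{k-1}})
\]
in $R^{\pm}$. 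Factoring the unit $x^{p_i}$ out of each factor $x^{p_k}-x^{p_i}$ reduces this, up to a unit, to the divisibility $f\mid\prod_{i=1}^{k-1}(x^{m_i}-1)$, where $m_i=p_k-p_i>0$.

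Since $x^{m_i}-1$ has as its complex roots precisely the $m_i$-th roots of unity, every root in $\comps$ of the product $\prod_i(x^{m_i}-1)$ is a (nonzero) root of unity, and therefore every root of its divisor $f$ is as well; note in particular that $0$ is not a root of $f$ since it is not a root of the product. Because $\comps$ is algebraically closed, $f$ splits as $c\prod_{j=1}^{\ell}(x-\rho_j)^{a_j}$ for distinct roots of unity $\rho_1,\ldots,\rho_\ell$, positive integer multiplicities $a_j$, and a leading constant $c\in\comps^{*}$. The constant $c$ is a unit of $R^{\pm}$, so up to such a unit $f$ has the stated form.

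There is no substantive obstacle here; the proof is almost entirely a bookkeeping exercise given Proposition~\ref{prp:principal}. The only points that merit explicit comment are confirming that the polynomial condition ``$f\mid f^{(i)}$'' entails the ideal-theoretic condition ``$(f)$ is power-closed'' that is required to invoke Proposition~\ref{prp:principal}, and recognizing that the binomials $x^{p_k}-x^{p_i}$ appearing in that proposition are unit multiples in $R^{\pm}$ of the standardized binomials $x^{m}-1$ whose roots are manifestly roots of unity.
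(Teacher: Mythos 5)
Your proof is correct and takes essentially the same approach as the paper: the paper's derivation of this observation is precisely the sentence immediately preceding it, which invokes Proposition~\ref{prp:principal} to get $f\mid (x^{p_1}-1)\cdots(x^{p_k}-1)$ in $R^\pm$ and reads off that the roots are roots of unity. You supply a bit more bookkeeping (verifying that $f\mid f^{(i)}$ forces $(f)$ to be power-closed, and reducing the binomials $x^{p_k}-x^{p_i}$ to $x^{m_i}-1$ up to a unit), which is exactly what the paper leaves implicit.
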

Suppose $f\in R^{\pm}$ is a power-polynomial of the form described
in the Observation~\ref{obs:roots-of-unity}. For a root
$\rho$ of $f$ with multiplicity $a$ we then have that $(x-\rho)^a$ divides
$f(x^n)$ for every $n\in\nats$. Since for each $n$ the polynomials
$x^n - \rho_i$ and $x^n - \rho_j$ are relatively prime for distinct $i$ and 
$j$, there is an $i$ such that $(x-\rho)^a$ divides a factor
$(x^n - \rho_i)^{a_i}$ 
of $f(x^n)$. Since $x^n-\rho_i$ is separable, the multiplicity of $\rho_i$
in $f(x^n)$ must be at least $a$, and so $a_i\geq a$.
This means that the multiplicity of $\rho_i$ in $f(x)$ is $a_i\geq a$. So,
since $\rho^n = \rho_i$ we see that for each $n$ the multiplicity of $\rho^n$
must be at least that of $\rho$ in $f$. We summarize in the following.
\begin{claim}
\label{clm:n-multipl}
If $f\in R^{\pm}$ is powered and $\rho$ is a root of $f$, then 
for every $n\in\nats$ the multiplicity of $\rho^n$ in $f$ is at least
that of $\rho$. 

Further, if $\rho_1,\ldots, \rho_{\ell}$ are all distinct roots of a
power-polynomial $f$
with $\rho_1^n = \rho_2^n = \cdots = \rho_{\ell}^n = \rho$, then 
$(x-\rho_1)^{a_1}(x-\rho_2)^{a_2}\cdots(x-\rho_{\ell})^{a_{\ell}}$ divides
$f(x^n)$ iff the multiplicity of $x-\rho$ in $f$ is at least
$\max(a_1,a_2,\ldots,a_{\ell})$.
\end{claim}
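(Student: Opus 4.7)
The plan for the first assertion is to unfold the definition of \emph{powered}. Let $a$ be the multiplicity of $\rho$ in $f$, so $(x-\rho)^a\mid f\mid f(x^n)$. Factor $f(x^n) = c\prod_i(x^n-\rho_i)^{a_i}$ over the distinct roots $\rho_i$ of $f$. The polynomials $x^n-\rho_i$ have disjoint zero sets and are therefore pairwise coprime, so $(x-\rho)^a$ must lie entirely inside a single factor $(x^n-\rho_{i_0})^{a_{i_0}}$, forcing $\rho^n=\rho_{i_0}$. Since $x^n-\rho_{i_0}$ is separable over $\comps$, $\rho$ appears in it with multiplicity one and hence with multiplicity exactly $a_{i_0}$ in $(x^n-\rho_{i_0})^{a_{i_0}}$; this yields $a_{i_0}\geq a$, which is the stated multiplicity bound. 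This is in essence the argument already sketched in the paragraph immediately preceding the claim.

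For the second assertion I would exploit pairwise coprimeness of the distinct linear factors $x-\rho_1,\ldots,x-\rho_\ell$: the product $(x-\rho_1)^{a_1}\cdots(x-\rho_\ell)^{a_\ell}$ divides $f(x^n)$ if and only if each $(x-\rho_j)^{a_j}$ does. The task then reduces to pinning down the exact multiplicity of each $\rho_j$ in $f(x^n)$. Writing $f(x)=c\prod_k(x-\sigma_k)^{b_k}$ in its unique factorization, one has $f(x^n)=c\prod_k(x^n-\sigma_k)^{b_k}$, and $\rho_j$ is a root of $x^n-\sigma_k$ iff $\rho_j^n=\sigma_k$, which by hypothesis forces $\sigma_k=\rho$. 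Separability of $x^n-\rho$ then pins the multiplicity of $\rho_j$ in $f(x^n)$ at exactly $b$, where $b$ denotes the multiplicity of $\rho$ in $f$. Hence $(x-\rho_j)^{a_j}\mid f(x^n)$ iff $a_j\leq b$, and imposing this for every $j$ simultaneously gives the equivalence $b\geq\max(a_1,\ldots,a_\ell)$.

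I do not expect a genuine obstacle: the entire argument rests on two elementary facts, namely pairwise coprimeness of distinct monic linear factors in $\comps[x]$, and separability of the polynomials $x^n-\sigma$. The only point worth flagging is that separability is used in two subtly different roles: in the first assertion it produces a sharp \emph{lower} bound on the multiplicity of $\rho^n$ in $f$, while in the second it produces the \emph{exact} value of the multiplicity of each $\rho_j$ in $f(x^n)$, which is precisely what is needed to upgrade a one-way implication to the biconditional stated in the claim.
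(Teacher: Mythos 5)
Your proof is correct and follows essentially the same argument as the paper's discussion immediately preceding the claim: unique factorization of $f(x^n)$, pairwise coprimeness of the polynomials $x^n-\rho_i$ for distinct $\rho_i$, and separability of $x^n-\rho$ over $\comps$. The paper only sketches the first assertion and leaves the biconditional implicit; your second paragraph makes that step explicit (pinning the exact multiplicity of each $\rho_j$ in $f(x^n)$ at the multiplicity of $\rho$ in $f$), but by appealing to the same two facts.
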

The above Claim~\ref{clm:n-multipl} yields a necessary condition
for a power-polynomial, but also a sufficient one by considering
all possible values of $n\in\nats$ and keeping track of the multiplicities.
So we get a description of power-polynomials $f$ in one variable.
Namely, a polynomial $f\in R^{\pm}$ of the form 
$f(x) = (x-\rho_1)^{a_1}(x-\rho_2)^{a_2}\cdots(x-\rho_k)^{a_k}$,
where $\rho_1,\ldots,\rho_k$ are distinct roots of unity in $\comps$,
is a power-polynomial if and only if (i) for each $i$ we have 
$\{\rho_i,\rho_i^2, \rho_i^3,\ldots\} \subseteq \{\rho_1,\ldots,\rho_k\}$
and (ii) for each $n$ the multiplicity of $\rho_i^n$ in $f$ is at
least that of $\rho_i$.

Recall that the $n$-th cyclotomic polynomial $\phi_n$ is the product
of all the linear factors $x-\rho$ where $\rho$ is a primitive $n$-th
root of unity: $\phi_n(x) = \prod_{|\rho| = n}(x-\rho)$.
The degree of $\phi_n$ is the Euler phi function
$\phi(n)$.  Any two distinct cyclotomic polynomials are 
relatively prime. Each cyclotomic polynomial is monic, lies in
$\ints[x]$, and is irreducible over $\rats$. 

By characterization of a power-polynomial described above,
two primitive $n$-th roots of unity must have the same multiplicity
in a power-polynomial $f$. By lumping
then together in the prime factorization of $f$ we obtain another more
transparent characterization of power-polynomials.
\begin{proposition}
\label{prp:1st-char}
Up to multiplication by a unit, a polynomial $f\in R^{\pm}$ is a
power-polynomial if and only if $f$ is
a product of cyclotomic polynomials in $\ints[x]$ in such a way that 
whenever $m$ divides $n$, the multiplicity of $\phi_m$
in $f$ is at least that of $\phi_n$ in $f$. In particular, if $f$ is
monic with nonzero constant term then $f\in\ints[x]$ and the
irreducible factors of $f$ over $\rats$ are all cyclotomic polynomials.
\end{proposition}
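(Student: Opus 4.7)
The plan is to combine Observation~\ref{obs:roots-of-unity} and Claim~\ref{clm:n-multipl} with the structure of the cyclotomic factorization. By Observation~\ref{obs:roots-of-unity}, up to a unit in $R^{\pm}$, every power-polynomial $f$ factors as $\prod_{i=1}^{k}(x-\rho_i)^{a_i}$ with distinct roots of unity $\rho_i\in\comps$. First I would show that two roots of $f$ of the same order must share the same multiplicity: if $\rho_i$ and $\rho_j$ are both primitive $n$-th roots of unity, then $\rho_j=\rho_i^s$ for some $s$ coprime to $n$, and applying Claim~\ref{clm:n-multipl} with $s$ and with its inverse modulo $n$ gives $a_j\geq a_i$ and $a_i\geq a_j$. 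Grouping factors of common order into cyclotomic polynomials then yields $f=u\cdot\prod_{n\geq 1}\phi_n^{e_n}$ for a suitable unit $u\in R^{\pm}$, where $e_n$ is the common multiplicity of any primitive $n$-th root of unity in $f$.

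The forward divisibility condition is then immediate from Claim~\ref{clm:n-multipl}: whenever $m\mid n$ and $e_n>0$, picking a primitive $n$-th root of unity $\rho$ makes $\rho^{n/m}$ a primitive $m$-th root of unity, and Claim~\ref{clm:n-multipl} forces $e_m\geq e_n$ (the case $e_n=0$ is vacuous). For the converse, I would appeal to the sufficiency noted right after Claim~\ref{clm:n-multipl}: $f$ is powered provided that for every root $\rho$ of $f$ and every $n\in\nats$ the multiplicity of $\rho^n$ in $f$ dominates that of $\rho$. A primitive $d$-th root $\rho$ satisfies that $\rho^n$ is a primitive $d'$-th root with $d'=d/\gcd(d,n)\mid d$, and the hypothesis $e_{d'}\geq e_d$ is exactly what is required.

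The "in particular" clause then falls out: if $f$ is monic with nonzero constant term, the unit $u$ must equal $1$ (no factor of $x$ in $f$, and the leading coefficient is $1$), so $f=\prod_n\phi_n^{e_n}\in\ints[x]$. Since each $\phi_n$ is monic, lies in $\ints[x]$, and is irreducible over $\rats$, and since distinct cyclotomic polynomials are coprime, these are exactly the $\rats$-irreducible factors of $f$.

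The only point requiring real care is the sufficiency direction invoked above, which the paper states but does not fully establish. The cleanest justification is that for any nonzero $\alpha\in\comps$ the polynomial $x^n-\alpha$ is separable, so the multiplicity of a nonzero $\zeta\in\comps$ in $f(x^n)$ equals the multiplicity of $\zeta^n$ in $f$; hence the divisibility $f\mid f^{(n)}$ is equivalent to the family of multiplicity inequalities produced by Claim~\ref{clm:n-multipl}. Everything else is bookkeeping with the orders of roots of unity and the degree of each $\phi_n$.
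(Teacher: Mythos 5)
Your proposal is correct and follows the paper's roadmap: pass through Observation~\ref{obs:roots-of-unity} and Claim~\ref{clm:n-multipl}, show that primitive $n$-th roots share a common multiplicity, and then bundle into cyclotomic factors. The one place you go beyond the paper is in explicitly justifying the sufficiency direction (that the multiplicity conditions imply $f\mid f^{(n)}$) via the separability of $x^n-\alpha$ for $\alpha\neq 0$, which the paper asserts in the discussion after Claim~\ref{clm:n-multipl} but does not argue; that is a genuine gap worth closing, and your one-line separability observation closes it cleanly.
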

\begin{example}
\label{exa:4342}  
Consider the polynomials $f$ and $g$ given by
\begin{eqnarray*}
f(x)  & = & \phi_{12}(x)^2\phi_{8}(x)^2\phi_{6}(x)^2\phi_{4}(x)^3
\phi_{3}(x)^2\phi_{2}(x)^3\phi_{1}(x)^4, \\
g(x)  & = & \phi_{12}(x)^2\phi_{8}(x)^3\phi_{6}(x)^2\phi_{4}(x)^2
\phi_{3}(x)^2\phi_{2}(x)^3\phi_{1}(x)^4.
\end{eqnarray*}
Here the first one $f$ is a power-polynomial, whereas the latter one $g$
is not a power-polynomial, since the multiplicity of $\phi_8$ in $g$ 
is $3$ and the multiplicity of $\phi_4$ in $g$ is $2 < 3$.
\end{example}
Using the principle of Inclusion/Exclusion (or induction on $n$)
one obtains that
\[
\phi_n(x) = \prod_{d|n}(x^d - 1)^{\mu(n/d)},
\]
where $\mu$ is the M\"{o}bius function, and so each power-polynomial
can be written as a {\em rational} expression in terms of binomials $x^n-1$
in $R^{\pm}=\comps[x]^{\pm}$. 
However, more can be said about the specific structure
of such a rational expression of polynomials in $\rats(x)$ if it is powered. 
By Proposition~\ref{prp:1st-char}
we see that for every cyclotomic factor $\phi_n$ of a power-polynomial $f$ with 
a given multiplicity, then each $\phi_d$ where $d|n$ also appears
in $f$ with at least the same multiplicity. Noting that for each $n\in\nats$
we have $\prod_{d|n}\phi_d(x) = x^n-1$, this observation yields a way
to present every power-polynomial $f$ in terms of products of
least common multiples of binomials in $R^{\pm}$. We first demonstrate
with an example.
\begin{example}
\label{exa:phi-fraction}  
We can rewrite the power-polynomial $f$ from
Example~\ref{exa:4342} as follows:
\begin{eqnarray*}
f(x) &  = & \phi_{12}(x)^2\phi_{8}(x)^2\phi_{6}(x)^2\phi_{4}(x)^3 
                \phi_{3}(x)^2\phi_{2}(x)^3\phi_{1}(x)^4 \\
  & = & (\phi_{12}(x)\phi_{8}(x)\phi_{6}(x)\phi_{4}(x) 
                \phi_{3}(x)\phi_{2}(x)\phi_{1}(x))^2\cdot
        (\phi_{4}(x)\phi_{2}(x)\phi_{1}(x))\cdot\phi_{1}(x) \\
  & = & (\lcm(x^{12} -1, x^8 - 1))^2\cdot(x^4 - 1)\cdot(x - 1).\\
  & = & \frac{(x^{12} -1)^2(x^8 - 1)^2(x^4 - 1)(x - 1)}{(x^{\gcd(12,8)} - 1)^2} \\
  & = & \frac{(x^{12} -1)^2(x^8 - 1)^2(x - 1)}{x^4 - 1},
\end{eqnarray*}
\end{example}
What is done in the above example can be done in general. Suppose
$f$ is
a monic power-polynomial with nonzero constant term.
By Proposition~\ref{prp:1st-char} it has the form
\[
f(x) = \prod_{i\in I} \phi_i(x)^{a_i},
\]
where $d|i$ implies $a_d \geq a_i$. In particular, if $d|i$ and $i\in I$
then $d\in I$ as well. Looking at the possible multiplicities
$\{ a_i : i\in I\}$ we can assume that they take $h$ distinct values,
say, $\alpha_1 < \alpha_2 < \cdots < \alpha_h$. In this case
we can write $f$ as
\begin{equation}
\label{eqn:psi}  
f(x) = \left( \prod_{i\in P_1}\phi_i(x)\right)^{\alpha_1}
  \cdot \left( \prod_{i\in P_2}\phi_i(x)\right)^{\alpha_2 - \alpha_1}
  \cdots \left( \prod_{i\in P_h}\phi_i(x)\right)^{\alpha_h - \alpha_{h-1}},
\end{equation}
where $I = P_1 \supset P_2 \supset \cdots \supset P_h$ is a nested sequence
of subsets of the index set $I$ of positive integers. 
By Proposition~\ref{prp:1st-char} it is
clear that each of the index set $P_l$ has the property that if $i\in P_l$
and $d|i$ then $d\in P_l$ as well.
\begin{definition}
\label{def:psi}  
Let $P\subseteq\nats$ be a finite subposet of $\nats$ w.r.t.~divisibility,
so whenever $i\in P$ and $d|i$ then $d\in P$ as well. A polynomial of
the following form
\[
\psi_P(x) = \prod_{i\in P} \phi_i(x),
\]
will be referred to as a {\em psi-polynomial}.
\end{definition}
Clearly, if $\max(P) = \{n_1,\ldots,n_c\}$ is the set of
the maximal positive integers in $P$, then $P$ consists collectively of all
divisors of $n_1,\ldots,n_c$. In this case we have the following.
\begin{observation}
\label{obs:lcm-binom}
A polynomial $\psi_P(x)$ as in Definition~\ref{def:psi} where
$P$, when viewed as a poset w.r.t.~divisibility, has the set of
maximal elements $\max(P) = \{n_1,\ldots,n_c\}$, can be written as
\[
\psi_P(x) = \lcm(x^{n_1}-1,x^{n_2}-1,\ldots,x^{n_c}-1),
\]
the least common multiple of a collection of binomials in $R^{\pm}$.
\end{observation}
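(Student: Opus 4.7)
The plan is to use the classical factorization $x^n - 1 = \prod_{d \mid n} \phi_d(x)$ together with the fact that distinct cyclotomic polynomials are pairwise coprime irreducibles in $\comps[x]$, so that the lcm of any collection of binomials $x^{n_j} - 1$ is determined factor-by-factor on the cyclotomic primes.

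First I would recall that for each $j$, $x^{n_j} - 1 = \prod_{d \mid n_j} \phi_d(x)$, and that this factorization is into distinct irreducible factors (in particular, $x^{n_j}-1$ is squarefree). Since the $\phi_d$ are pairwise non-associate primes in the UFD $R$ (and correspondingly in $R^{\pm}$, up to units), the least common multiple of the $x^{n_j}-1$ is obtained by taking, for each cyclotomic prime $\phi_d$, the maximum multiplicity with which it appears in any of the $x^{n_j}-1$. Because each $x^{n_j}-1$ is squarefree, this maximum multiplicity is either $0$ or $1$, and it equals $1$ precisely when $d$ divides some $n_j$.

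Next I would translate the condition ``$d$ divides some $n_j$'' into membership in $P$. Since $P$ is closed under taking divisors (it is a downward-closed subposet of $\nats$ under divisibility) and $\max(P) = \{n_1, \ldots, n_c\}$, an integer $d$ lies in $P$ if and only if $d$ divides at least one of the maximal elements $n_j$. Consequently
\[
\lcm(x^{n_1}-1, \ldots, x^{n_c}-1) = \prod_{d \in P} \phi_d(x) = \psi_P(x),
\]
which is exactly the claimed identity.

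There is no real obstacle here; the only point one needs to be slightly careful about is the unit ambiguity in $R^{\pm}$ (the lcm being defined only up to a unit $\pm x^k$), but since each $\phi_d$ is monic with nonzero constant term, the right-hand product $\psi_P(x)$ is a natural monic representative, and the identity holds on the nose once one fixes this normalization.
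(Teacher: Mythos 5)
Your proof is correct and takes essentially the same route as the paper: the paper's surrounding discussion invokes the identity $\prod_{d\mid n}\phi_d(x) = x^n-1$ and the fact that a downward-closed $P$ equals the union of the divisor sets of its maximal elements, which is exactly the factor-by-factor lcm computation on pairwise coprime, squarefree cyclotomic factors that you carry out. Your remark about fixing the monic representative to resolve the unit ambiguity in $R^{\pm}$ is a sound (if minor) clarification that the paper leaves implicit.
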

Each psi-polynomial is powered by Proposition~\ref{prp:lcm}
and we will see that these polynomials form a basis for the complex vector space
of power-polynomials in $\comps[x]$.

A nice thing about $\psi_P(x)$ in Observation~\ref{obs:lcm-binom}
is that it has an explicit form as a rational function in terms
of the defining binomials. This can be obtained by the Inclusion/Exclusion
Principle.
\begin{lemma}
\label{lmm:explicit}
For the polynomial $\psi_P$ of Observation~\ref{obs:lcm-binom},
\[
\psi_P(x) = \prod_{i=1}^c
\left(\prod_{S\subseteq\binom{[k]}{i}} (x^{\gcd(n_j : j\in S)} - 1)
\right)^{(-1)^{i-1}}.
\]
\end{lemma}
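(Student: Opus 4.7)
The plan is to identify $\psi_P(x)$ with the least common multiple
$\lcm(x^{n_1}-1,\ldots,x^{n_c}-1)$ via Observation~\ref{obs:lcm-binom}, and
then verify the displayed identity by a standard inclusion/exclusion
argument applied to multiplicities of cyclotomic factors. I also
interpret ``$\binom{[k]}{i}$'' as ``$\binom{[c]}{i}$'', i.e., the
$i$-element subsets of $\{1,\ldots,c\}$, which is forced by the
context.

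First, I would record the two facts that do the work. In the UFD
$\comps[x]$, the multiplicity of the cyclotomic polynomial $\phi_d$ in
$x^n-1$ equals $1$ if $d\mid n$ and $0$ otherwise; in particular, for
any nonempty $S\subseteq[c]$, the multiplicity of $\phi_d$ in
$x^{\gcd(n_j:\,j\in S)}-1$ is
\[
\mathbb{1}[d\mid\gcd(n_j:j\in S)]=\mathbb{1}[S\subseteq T_d], \qquad
\text{where } T_d:=\{j\in[c]:d\mid n_j\}.
\]
On the other side, since $P$ consists of all divisors of
$n_1,\ldots,n_c$, the multiplicity of $\phi_d$ in $\psi_P$ equals
$\mathbb{1}[d\in P]=\mathbb{1}[T_d\neq\emptyset]$.

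With these two observations in hand, I would match multiplicities of
$\phi_d$ on both sides of the proposed identity. The exponent of
$\phi_d$ in the right-hand side is
\[
\sum_{i=1}^{c}(-1)^{i-1}\bigl|\{S\in\tbinom{[c]}{i}: S\subseteq T_d\}\bigr|
\;=\;\sum_{i=1}^{|T_d|}(-1)^{i-1}\binom{|T_d|}{i}.
\]
Setting $t=|T_d|$, the binomial theorem gives
$\sum_{i=0}^{t}(-1)^{i}\binom{t}{i}=0$ when $t\ge 1$, so the above
sum equals $1$ when $t\ge 1$ and $0$ when $t=0$. This matches the
multiplicity of $\phi_d$ in $\psi_P$ in every case, so the two sides of
the displayed identity have equal $\phi_d$-multiplicity for every
positive integer $d$, and hence agree in $\comps[x]$ (and a fortiori
in $R^{\pm}$).

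There is no real obstacle beyond bookkeeping: the key ingredient is
$\gcd(x^a-1,x^b-1)=x^{\gcd(a,b)}-1$ (generalized to any finite set,
which is the source of the $x^{\gcd(n_j:j\in S)}-1$ terms), and the
sole combinatorial input is the vanishing of the alternating sum of
binomial coefficients. The one minor subtlety worth pointing out is
that powers of $x$ appearing in denominators are units in $R^{\pm}$,
so no issue arises from the rational (rather than polynomial)
presentation.
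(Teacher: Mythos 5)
Your proof is correct and fleshes out exactly the argument the paper gestures at but does not actually write out (the text immediately before Lemma~\ref{lmm:explicit} simply says the identity ``can be obtained by the Inclusion/Exclusion Principle'' and then states it without proof). Your reduction to comparing the $\phi_d$-multiplicity on both sides, using that $\phi_d$ occurs in $x^{\gcd(n_j:j\in S)}-1$ with multiplicity $\mathbb{1}[S\subseteq T_d]$ and that the alternating sum $\sum_{i\geq 1}(-1)^{i-1}\binom{t}{i}$ equals $\mathbb{1}[t\geq 1]$, is the natural way to make that inclusion/exclusion precise, and you are also right that the ``$[k]$'' in the displayed formula is a typo for ``$[c]$''. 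This is the same route the authors have in mind, just carried out in full.
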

\begin{example}
\label{exa:n1n2n3}
When the index set $P$ has three maximal elements
$\max(P) = \{n_1,n_2,n_3\}$ then by Lemma~\ref{lmm:explicit}
the corresponding $\psi$ has the form
\[
\psi(x) = \frac{(x^{n_1}-1)(x^{n_2}-1)(x^{n_3}-1)(x^{\gcd(n_1,n_2,n_3)} - 1)}
{(x^{\gcd(n_1,n_2)} - 1)(x^{\gcd(n_1,n_3)} - 1)(x^{\gcd(n_2,n_3)} - 1)}.
\]
\end{example}
Let $P$ be a finite poset. For any antichain $A\subseteq P$
let $D(A) = \{x\in P : x\leq a \mbox{ for some }a\in A\}$ denote
the {\em downset of $A$}. If $P$ is such that $x\leq y\in P$ implies $x\in P$,
then $P = D(\max(P))$ is uniquely determined or generated by its
maximal elements. If $A$ and $B$ are antichains in $P$ then 
$D(A) = D(B)$ if and only if $A = B$. This yields a partial order
on the set ${\cal{A}}(P)$ of antichains of any finite poset $P$.
\begin{definition}
\label{def:antichain-poset}
Given a poset $P$, let ${\cal A}(P)$ denote the set of antichains in $P$.
Let $\preceq$ denote the partial order on ${\cal A}(P)$ in which, for
$A, B \in {\cal A}(P)$, 
\[
A\preceq B \Leftrightarrow D(A) \subseteq D(B).
\]
\end{definition}
With the notion of downsets we can generalize Definition~\ref{def:psi}  
of the psi-polynomials by setting $\psi_C(x) := \psi_{D(C)}(x)$
for any finite subset $C\subseteq\nats$ where $\nats$ is the poset
w.r.t.~divisibility. In particular, if $A$ is a finite antichain of $\nats$,
then, by Observation~\ref{obs:lcm-binom} we have
\begin{equation}
\label{eqn:lcm-A}  
\psi_A(x) = \lcm(x^a-1 : a\in A).
\end{equation}
By Lemma~\ref{lmm:explicit} $\psi_A(x)$ can be written as a
rational function of binomials.

The following provides a second characterization of power-polynomials.
\begin{theorem}
\label{thm:2nd-char}
A polynomial $f\in R^{\pm}$ is a power-polynomial if and only if $f$ is
a product of psi-polynomials as in (\ref{eqn:lcm-A}):
\begin{equation}
\label{eqn:psi-prod}
f(x) = \prod_{i=1}^h \psi_{A_i}(x)^{\alpha_i},
\end{equation}
for some collection of finite antichains $A_1,\ldots,A_h$ of $\nats$
w.r.t.~divisibility.
The product in (\ref{eqn:psi-prod}) can be chosen
in a nested way:  $A_1\prec A_2 \prec \cdots \prec A_h$, in which
case such a representation
of $f(x)$ is unique.
\end{theorem}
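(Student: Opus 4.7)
The plan is to derive both existence and uniqueness from the cyclotomic description in Proposition~\ref{prp:1st-char}. Up to multiplication by a unit, every power-polynomial has the form $f=\prod_{i\in I}\phi_i(x)^{a_i}$ with $I$ a finite divisor-closed subset of $\nats$ and $a_d\geq a_i$ whenever $d\mid i$. The ``if'' direction is then immediate: each binomial $x^n-1$ is powered, so by Proposition~\ref{prp:lcm} each $\psi_A(x)=\lcm(x^a-1:a\in A)$ is powered; and since $f\mid f^{(i)}$ and $g\mid g^{(i)}$ imply $fg\mid(fg)^{(i)}$, an arbitrary product $\prod_j\psi_{A_j}^{\alpha_j}$ is again powered.

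For the ``only if'' direction I would stratify by the level sets of the multiplicity function $i\mapsto a_i$. Let $\beta_1>\beta_2>\cdots>\beta_h>0$ be the distinct positive multiplicities occurring in $f$, set $\beta_{h+1}:=0$, and define $P_j=\{i\in I:a_i\geq\beta_j\}$. The key point is that each $P_j$ is downward closed: if $i\in P_j$ and $d\mid i$, then the divisibility condition from Proposition~\ref{prp:1st-char} gives $a_d\geq a_i\geq\beta_j$, so $d\in P_j$. Hence $A_j:=\max(P_j)$ is an antichain with $D(A_j)=P_j$, and the strict chain $P_1\subsetneq P_2\subsetneq\cdots\subsetneq P_h$ translates to $A_1\prec A_2\prec\cdots\prec A_h$. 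Setting $\alpha_j:=\beta_j-\beta_{j+1}\geq 1$, the identity
\[
f=\prod_{j=1}^{h}\psi_{A_j}(x)^{\alpha_j}
\]
follows by checking that for each $i\in I$ with $a_i=\beta_{k(i)}$, the exponent of $\phi_i$ on the right is $\sum_{j:\,i\in D(A_j)}\alpha_j=\sum_{j=k(i)}^{h}(\beta_j-\beta_{j+1})=\beta_{k(i)}=a_i$.

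For uniqueness of the nested representation, expanding any $f=\prod_{j=1}^h\psi_{A_j}^{\alpha_j}$ with $A_1\prec\cdots\prec A_h$ and $\alpha_j\geq 1$ shows that the multiplicity of $\phi_i$ in $f$ is $\sum_{j:\,i\in D(A_j)}\alpha_j$; strict nesting of the downsets forces $\{j:i\in D(A_j)\}$ to be a terminal interval $\{k(i),\ldots,h\}$, so this multiplicity equals $\alpha_{k(i)}+\cdots+\alpha_h$. The partial sums $\beta_k:=\alpha_k+\cdots+\alpha_h$ are strictly decreasing in $k$; hence they are exactly the distinct positive multiplicities occurring in the unique cyclotomic factorization of $f$. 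This recovers $h$, the $\beta_k$'s, the $\alpha_k$'s as successive differences, and the downsets $D(A_k)=\{i:a_i\geq\beta_k\}$, hence the antichains $A_k$. The only delicate point is the bookkeeping at the interface between the cyclotomic and psi-polynomial descriptions: the distinct multiplicities $\beta_j$ must be listed in decreasing order so that the corresponding antichains are increasing under $\prec$, with the exponents $\alpha_j$ emerging as consecutive gaps between the $\beta_j$'s; beyond that, the argument reduces to elementary manipulation with cyclotomic factorizations.
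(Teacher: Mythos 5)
Your proof is correct and follows essentially the same route as the paper: the cyclotomic description from Proposition~\ref{prp:1st-char}, stratification by the level sets of the multiplicity function (which is exactly the construction in equation~(\ref{eqn:psi})), and uniqueness via the unique factorization of $f$ into cyclotomic factors. You simply spell out the reconstruction of $h$, the $\beta_j$'s, and the antichains more explicitly than the paper, which compresses this into a brief appeal to induction on $h$.
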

\begin{proof}
That the product in (\ref{eqn:psi-prod}) can be chosen in a nested way
where $A_1\prec A_2 \prec \cdots \prec A_h$ follows directly from
(\ref{eqn:psi}) and Definition~\ref{def:psi}.

If $A_1\prec A_2 \prec \cdots \prec A_h$ is a nested sequence
of antichains of $\nats$, then the corresponding psi-polynomials 
also form a nested sequence 
$\psi_{A_1}(x) | \psi_{A_2}(x) | \cdots | \psi_{A_h}(x)$
w.r.t.~divisibility in the polynomial ring $\ints[x]$. That the product
in (\ref{eqn:psi-prod}) is unique when it is in a nested
way then follows by induction on $h$; the number of antichains in the product
and the unique factorization of the polynomial $f(x)$ in (\ref{eqn:psi-prod})
into its irreducible cyclotomic $\phi_i(x)$ factors.
\end{proof}
\begin{example}
\label{exa:psi-n1n2n3}  
Let $\{n_1,n_2,n_3\}$ be an antichain in $\nats$ w.r.t.~divisibility,
so that none divides another.
By definition of the psi-polynomial we then have
\begin{eqnarray*}
&   & \psi_{\{n_1,n_2\}}(x)\psi_{\{n_3\}}(x)\psi_{\{\gcd(n_1,n_2)\}}(x) \\
& = & \psi_{\{n_1,n_3\}}(x)\psi_{\{n_2\}}(x)\psi_{\{\gcd(n_1,n_3)\}}(x) \\
& = & \psi_{\{n_2,n_3\}}(x)\psi_{\{n_1\}}(x)\psi_{\{\gcd(n_2,n_3)\}}(x) \\
& = & \psi_{\{n_1,n_2,n_3\}}(x)\psi_{\{\gcd(n_1,n_2),\gcd(n_1,n_3),\gcd(n_2,n_3)\}}(x)
\psi_{\{\gcd(n_1,n_2,n_3)\}}(x),
\end{eqnarray*}
the last expression is unique since
\[
\{\gcd(n_1,n_2,n_3)\}
  \prec \{\gcd(n_1,n_2),\gcd(n_1,n_3),\gcd(n_2,n_3)\}
  \prec \{n_1,n_2,n_3\}
\]
is a nested sequence of antichains.
\end{example}

\subsection{Generation and computation of power-polynomials}
\label{subsec:generation}

Here we discuss some further properties of power-polynomials.
In particular, we take a slightly more systematic approach and discuss
how a given polynomial in one variable in either $R= {\comps}[x]$
or $R^{\pm} = {\comps}[x]^{\pm}$ can {\em generate} a power-polynomial,
both from below and from above.

To add to the discussion we started with Proposition~\ref{prp:lcm},
consider $f,g\in R\subseteq R^{\pm}$. If $d = \gcd(f,g)$, then since
$d$ divides both $f$ and $g$, we have $d^{(i)}$ divides both $f^{(i)}$
and $g^{(i)}$ and so $d^{(i)}$ divides $\gcd(f^{(i)},g^{(i)})$. As $R$ and
$R^{\pm}$ are PID, $d$ is a linear combination of $f$ and $g$, say
$d = af + bg$, and hence $d^{(i)} = a^{(i)}f^{(i)} + b^{(i)}g^{(i)}$ for each
$i\in{\nats}$. This implies that $\gcd(f^{(i)},g^{(i)})$ divides $d^{(i)}$ and
so $\gcd(f,g)^{(i)} = \gcd(f^{(i)}, g^{(i)})$ for each $i\in\nats$. From this
we then obtain
\[
\lcm(f,g)^{(i)} = \left(\frac{fg}{\gcd(f,g)}\right)^{(i)}
= \frac{(fg)^{(i)}}{\gcd(f,g)^{(i)}}
= \frac{f^{(i)}g^{(i)}}{\gcd(f^{(i)},g^{(i)})} = \lcm(f^{(i)},g^{(i)}),
\]
for each $i\in\nats$ as well, and so we have the following.
\begin{observation}
\label{obs:gcd-lcm-i}
For every $f,g\in {\comps}[x]\subseteq {\comps}[x]^{\pm}$ and every $i\in\nats$
we have $\gcd(f,g)^{(i)} = \gcd(f^{(i)}, g^{(i)})$ and
$\lcm(f,g)^{(i)} = \lcm(f^{(i)}, g^{(i)})$.
\end{observation}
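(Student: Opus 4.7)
The plan is to exploit that the assignment $x \mapsto x^i$ is a ring endomorphism of $\comps[x]$ (and of $\comps[x]^{\pm}$, since it sends units to units), and then use the fact that $\comps[x]$ and $\comps[x]^{\pm}$ are PIDs, where Bezout's identity is available and where $\lcm$ and $\gcd$ are linked by $\lcm(f,g)\cdot\gcd(f,g) = fg$ (up to units).

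First, for the $\gcd$ statement, let $d = \gcd(f,g)$. Since $d\mid f$ and $d\mid g$, and $h\mapsto h^{(i)}$ is a ring homomorphism and therefore preserves divisibility, we have $d^{(i)}\mid f^{(i)}$ and $d^{(i)}\mid g^{(i)}$; hence $d^{(i)}\mid \gcd(f^{(i)},g^{(i)})$. For the reverse direction, because $R$ (or $R^{\pm}$) is a PID, Bezout's identity gives $a, b$ with $d = af + bg$. Applying the homomorphism $h\mapsto h^{(i)}$ yields $d^{(i)} = a^{(i)}f^{(i)} + b^{(i)}g^{(i)}$, so any common divisor of $f^{(i)}$ and $g^{(i)}$ divides $d^{(i)}$; in particular $\gcd(f^{(i)},g^{(i)})\mid d^{(i)}$. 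The two divisibilities give $\gcd(f,g)^{(i)} = \gcd(f^{(i)},g^{(i)})$ up to units.

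Second, for the $\lcm$ statement, invoke the PID identity $\lcm(f,g)\cdot \gcd(f,g) = fg$ (up to units) to write $\lcm(f,g) = fg/\gcd(f,g)$. Apply $h\mapsto h^{(i)}$, which respects products, to obtain
\[
\lcm(f,g)^{(i)} = \frac{(fg)^{(i)}}{\gcd(f,g)^{(i)}} = \frac{f^{(i)}g^{(i)}}{\gcd(f^{(i)},g^{(i)})} = \lcm(f^{(i)},g^{(i)}),
\]
where the middle equality uses the $\gcd$ statement just proved, and the last again uses the PID identity for $f^{(i)}$ and $g^{(i)}$.

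The only potential pitfall is keeping track of unit ambiguity (in $R^{\pm}$ the units include all monomials in $x$, and in $R$ they are nonzero constants), but since $\gcd$ and $\lcm$ are defined only up to units, this is not an obstacle --- each equality above is an equality of associates, which is what the statement asserts. No deeper argument is needed.
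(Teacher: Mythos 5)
Your proof is correct and follows essentially the same two-step route as the paper: establish $\gcd(f,g)^{(i)}=\gcd(f^{(i)},g^{(i)})$ via one easy divisibility direction plus B\'ezout, then deduce the $\lcm$ identity from the PID relation $\lcm(f,g)\gcd(f,g)=fg$. The remark about unit ambiguity is a reasonable clarification but does not change the substance.
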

If $I$ is an ideal of $R = {\comps}[x]$ or $R^{\pm}$, then $I = (f)$ is
principal and hence also its power-closure
$I^{(*)} = (f^{(*)})$ for some $f^{*}\in R$ or $R^{\pm}$.
Since $\lambda(f) \leq \deg(f)+1$
we have by Proposition~\ref{prp:*-gen} the following.
\begin{observation}
\label{obs:f*}  
For any $f \in {\comps}[x]\subseteq {\comps}[x]^{\pm}$ we have
\[
f^{(*)} = \gcd(f^{(i)} : i\in{\nats}) = \gcd(f,f^{(2)},\ldots,f^{(\deg(f)+1)}).
\]
\end{observation}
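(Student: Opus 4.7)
The plan is to exploit the PID structure of $R = \comps[x]$ (and $R^\pm$) together with the already-proved finite-generation statement Proposition~\ref{prp:*-gen}. Since every ideal is principal, the power-closure $(f)^{(*)}$ equals $(f^{(*)})$ for some generator $f^{(*)}$ determined up to units. By the very definition of power-closure, $(f)^{(*)}$ contains $f^{(i)}$ for every $i \in \nats$, so $f^{(*)}$ is a common divisor of the entire family $\{f^{(i)} : i\in\nats\}$.

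For the reverse direction I would invoke Proposition~\ref{prp:*-gen} applied to the one-generator ideal $(f)$, which gives
\[
(f)^{(*)} = (f, f^{(2)}, \ldots, f^{(\lambda(f))}).
\]
Since $\lambda(f) \le \deg(f)+1$, the list $f, f^{(2)}, \ldots, f^{(\deg(f)+1)}$ contains these generators together with some additional elements that already lie in the power-closed ideal on the left, so enlarging the generating set to include them does not change the ideal; that is,
\[
(f)^{(*)} = (f, f^{(2)}, \ldots, f^{(\deg(f)+1)}).
\]
In a PID a finitely generated ideal $(g_1, \ldots, g_r)$ is identical with $(\gcd(g_1,\ldots,g_r))$, so up to units
\[
f^{(*)} = \gcd(f, f^{(2)}, \ldots, f^{(\deg(f)+1)}).
\]

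To finish, I would reconcile this finite gcd with $\gcd(f^{(i)} : i \in \nats)$. On the one hand, $f^{(*)}$ divides each $f^{(i)}$ by the first paragraph, so $f^{(*)}$ is a common divisor of the infinite family and hence divides their gcd. On the other hand, any common divisor of the infinite family divides the finite sub-family $\{f, f^{(2)}, \ldots, f^{(\deg(f)+1)}\}$, hence divides $f^{(*)}$. This gives the desired chain of equalities.

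There is no substantive obstacle here; the whole argument is a bookkeeping assembly of Proposition~\ref{prp:*-gen}, the PID fact that finitely generated ideals are principal with gcd generator, and the passage from $\lambda(f)$ to the (possibly larger but harmless) bound $\deg(f)+1$. The only minor point to keep straight is that enlarging the finite generating set past $f^{(\lambda(f))}$ is legitimate precisely because the ideal it generates is already power-closed.
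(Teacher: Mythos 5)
Your proof is correct and takes essentially the same route as the paper: the paper's own justification is the one-line remark that $\lambda(f) \le \deg(f)+1$ together with an appeal to Proposition~\ref{prp:*-gen}, and you have simply spelled out the bookkeeping — that enlarging the generating set past $f^{(\lambda(f))}$ is harmless because the extra generators already lie in the power-closed ideal, that a finitely generated ideal in a PID is generated by the gcd of its generators, and the two-sided divisibility argument reconciling the finite and infinite gcds.
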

Note that $f$ is powered if and only if $f^{(*)} = f$. Hence, directly from
the above observation we get the following analogous to
Proposition~\ref{prp:lcm}.
\begin{proposition}
\label{prp:gcd}
For any $f, g \in {\comps}[x]\subseteq {\comps}[x]^{\pm}$ we have
$\gcd(f,g)^{(*)} = \gcd(f^{(*)},g^{(*)})$. In particular, if
both $f$ and $g$ are powered, then $\gcd(f,g)$ is also powered.
\end{proposition}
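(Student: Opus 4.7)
The plan is to combine the two immediately preceding observations. Observation~\ref{obs:gcd-lcm-i} gives a pointwise identity, $\gcd(f,g)^{(i)} = \gcd(f^{(i)},g^{(i)})$ for every $i\in\nats$, while Observation~\ref{obs:f*} supplies the formula $h^{(*)} = \gcd(h^{(i)} : i\in\nats)$ for each $h\in\comps[x]$. Together these should compute both sides of the claimed equality as a $\gcd$ of the same double-indexed family.

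Concretely, I would first note a general monotonicity fact: if $h\mid k$ in $R$ (equivalently, $(k)\subseteq(h)$), then $h^{(*)}\mid k^{(*)}$, since by Proposition~\ref{prp:closure} the closure operator is monotone on ideals. For the easy direction, since $\gcd(f,g)\mid f$ and $\gcd(f,g)\mid g$, monotonicity yields $\gcd(f,g)^{(*)}\mid f^{(*)}$ and $\gcd(f,g)^{(*)}\mid g^{(*)}$, and hence $\gcd(f,g)^{(*)} \mid \gcd(f^{(*)},g^{(*)})$.

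For the reverse direction, let $h = \gcd(f^{(*)},g^{(*)})$. Because $f^{(*)} \mid f^{(i)}$ and $g^{(*)}\mid g^{(i)}$ for every $i$ (again by Observation~\ref{obs:f*}), $h$ divides every $f^{(i)}$ and every $g^{(i)}$, and therefore $h \mid \gcd(f^{(i)},g^{(i)})$ for each $i$. Applying Observation~\ref{obs:gcd-lcm-i}, this says $h \mid \gcd(f,g)^{(i)}$ for each $i$, so $h$ divides the gcd of this family, which by Observation~\ref{obs:f*} equals $\gcd(f,g)^{(*)}$. Combined with the easy direction and the fact that gcds are determined up to units, this yields $\gcd(f,g)^{(*)} = \gcd(f^{(*)},g^{(*)})$.

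The ``in particular'' claim is an immediate specialization: if both $f$ and $g$ are powered then $f^{(*)}=f$ and $g^{(*)}=g$, and the equality just proved becomes $\gcd(f,g)^{(*)} = \gcd(f,g)$, which is the definition of $\gcd(f,g)$ being powered. There is no real obstacle here; the only thing to watch is that the PID assumption (which underlies both Observations~\ref{obs:gcd-lcm-i} and~\ref{obs:f*}) is essential, so the proof is restricted to $\comps[x]$ and $\comps[x]^{\pm}$ and does not extend verbatim to the multivariable setting.
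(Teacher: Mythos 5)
Your proof is correct and follows essentially the same route the paper intends: both the paper's implicit one-line computation and your two-divisibility argument rest on Observation~\ref{obs:gcd-lcm-i} (the pointwise identity $\gcd(f,g)^{(i)}=\gcd(f^{(i)},g^{(i)})$) together with Observation~\ref{obs:f*} ($h^{(*)}=\gcd(h^{(i)}:i\in\nats)$), with the paper simply regrouping the resulting double-indexed $\gcd$ rather than splitting into the two divisibility directions as you do.
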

We note that for a polynomial $c$, 
(i) if $f$ and $g$ both divide $c$ then so does $\lcm(f,g)$.
(ii) if both $f$ and $g$ are both divisible by $c$, then so
is $\gcd(f,g)$. With this in mind we obtain 
the following corollary from Propositions~\ref{prp:lcm} and~\ref{prp:gcd}.
\begin{corollary}
\label{cor:lattice}
Monic power-polynomials in ${\comps}[x]\subseteq {\comps}[x]^{\pm}$
form a lattice w.r.t.~divisibility, where the meet is $\gcd$ and the
join is $\lcm$.

Further, for any polynomials $a$ and $b$   
(i) the downset $D(b) = \{f : f|b\}$, and
(ii) the upset $U(a) = \{f : a|f\}$, and
(iii) the interval $[a,b] = \{f : a|f|b \}$ are all lattices.
\end{corollary}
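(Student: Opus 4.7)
The plan is to assemble the result directly from the two preceding propositions together with the elementary divisibility properties of $\gcd$ and $\lcm$ of polynomials, with essentially no new computation.

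First, I would establish the primary lattice claim. Divisibility is a partial order on the set of monic polynomials in $\comps[x]$ (and on the corresponding monic-up-to-unit representatives in $\comps[x]^{\pm}$), with meet and join given by $\gcd$ and $\lcm$ respectively. Restricting to the subset of monic power-polynomials, I would invoke Proposition~\ref{prp:gcd} to see that $\gcd(f,g)$ is powered whenever $f$ and $g$ are, and Proposition~\ref{prp:lcm} to see the same for $\lcm(f,g)$. So the set of monic power-polynomials is closed under both $\gcd$ and $\lcm$, and since these coincide with the meet and join in the ambient divisibility lattice, the first assertion is immediate.

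Next, I would verify parts (i)--(iii) by checking that each specified subset is closed under the lattice operations inherited from the lattice of monic power-polynomials. For the downset $D(b) = \{f : f \mid b\}$ of power-polynomials dividing $b$: if $f, g \in D(b)$ then $\gcd(f,g) \mid b$ is automatic, and $\lcm(f,g) \mid b$ holds because every common multiple of $f$ and $g$ is a multiple of $\lcm(f,g)$, and $b$ is such a common multiple. For the upset $U(a) = \{f : a \mid f\}$: if $a \mid f$ and $a \mid g$, then $a$ is a common divisor so $a \mid \gcd(f,g)$, and clearly $a \mid \lcm(f,g)$ since $a \mid f \mid \lcm(f,g)$. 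The interval $[a,b]$ is then simply $D(b) \cap U(a)$, and an intersection of two sublattices of a lattice is itself a sublattice, so closure under $\gcd$ and $\lcm$ is inherited.

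There is no substantial obstacle in this argument; the content is entirely carried by Propositions~\ref{prp:lcm} and~\ref{prp:gcd}, and the rest is purely order-theoretic bookkeeping about divisibility. The only point requiring a brief remark is the passage to $\comps[x]^{\pm}$, where divisibility must be taken up to the unit group generated by $x$; with that identification the argument is formally identical, since $\gcd$ and $\lcm$ in the localization $R^{\pm}$ are inherited from $R = \comps[x]$ after clearing common monomial factors.
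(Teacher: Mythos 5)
Your proof is correct and follows essentially the same route the paper implicitly takes: closure under $\gcd$ and $\lcm$ comes from Propositions~\ref{prp:gcd} and~\ref{prp:lcm}, and the downset/upset/interval claims follow from the elementary divisibility facts the paper records just before the corollary (a common multiple is divisible by the $\lcm$, a common divisor divides the $\gcd$). Your remark about passing to $\comps[x]^{\pm}$ via monic-up-to-unit representatives is a reasonable clarification that the paper leaves tacit.
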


We now discuss a description and the computation of
$f^{(*)}$ for a given polynomial $f$
in ${\comps}[x]$ or ${\comps}[x]^{\pm}$. Suppose $g$ is powered and divides $f$.
For each $i\in\nats$ we then have that $g | g^{(i)} | f^{(i)}$ and hence
$g$ divides $\gcd(f^{(i)} : i\in\nats) = f^{(*)}$. Therefore we note
the following.
\begin{claim}
\label{clm:max}
$f^{(*)}$ is the unique maximum polynomial $g$ w.r.t.~divisibility that
satisfies (i) $g|f$ and (ii) $g$ is powered.
\end{claim}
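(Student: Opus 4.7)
The plan is to verify the two defining properties of $f^{(*)}$ asserted in the claim (namely that $f^{(*)}$ itself divides $f$ and is powered), and then to invoke the paragraph immediately preceding the claim for the maximality half. Uniqueness of the maximum is of course only up to multiplication by a unit, since divisibility in $R$ and $R^\pm$ is antisymmetric only modulo associates; I would say this explicitly.

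I would start by establishing $f^{(*)}\mid f$. This is immediate from Observation~\ref{obs:f*}, which gives
\[
f^{(*)} = \gcd(f, f^{(2)}, \ldots, f^{(\deg(f)+1)}),
\]
so $f^{(*)}$ divides each generator, and in particular $f=f^{(1)}$. Next I would show that $f^{(*)}$ is itself powered. The slickest way is to observe that the ideal $(f^{(*)})$ is by construction the power-closure $(f)^{(*)}$, so it is power-closed (Proposition~\ref{prp:closure}); applying power-closedness to the generator gives $(f^{(*)})^{(i)} \in (f)^{(*)} = (f^{(*)})$ for every $i\in\nats$, i.e.\ $f^{(*)} \mid (f^{(*)})^{(i)}$, which is the defining condition of a power-polynomial. (Alternatively one can argue purely at the polynomial level: by Observation~\ref{obs:gcd-lcm-i} extended to the infinite $\gcd$ appearing in Observation~\ref{obs:f*},
\[
(f^{(*)})^{(i)} = \gcd\bigl(f^{(ji)} : j\in\nats\bigr),
\]
which is divisible by $\gcd(f^{(k)} : k\in\nats) = f^{(*)}$ since $\{ji : j\in\nats\} \subseteq \nats$.)

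For the maximality claim, I would simply cite the argument already given in the displayed paragraph just before the statement: if $g$ is powered and $g\mid f$, then for every $i\in\nats$ one has $g\mid g^{(i)}$ (from powered-ness) and $g^{(i)}\mid f^{(i)}$ (by applying the substitution homomorphism $x\mapsto x^i$ to $g\mid f$), hence $g\mid f^{(i)}$ for every $i$, and therefore $g\mid\gcd(f^{(i)}:i\in\nats) = f^{(*)}$. Combined with the first paragraph, this shows $f^{(*)}$ is a powered divisor of $f$ that is divisible by every other powered divisor of $f$, which is exactly what ``unique maximum up to units'' means in the divisibility lattice.

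There is really no serious obstacle here: the claim is essentially a repackaging of Observation~\ref{obs:f*}, the definition of power-closedness, and the short one-line argument preceding the claim. The only point that warrants care is to keep clear the two distinct usages of the symbol $(\cdot)^{(*)}$ in this context, namely as a polynomial (the canonical generator of a principal power-closed ideal) and as an operator on ideals; all other steps are direct.
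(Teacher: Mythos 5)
Your proposal is correct and its maximality argument is verbatim the one the paper gives in the paragraph preceding the claim: for powered $g \mid f$, chain $g \mid g^{(i)} \mid f^{(i)}$ and conclude $g \mid \gcd(f^{(i)} : i \in \nats) = f^{(*)}$. The only difference is that you also spell out explicitly why $f^{(*)}$ itself satisfies (i) and (ii) — via Observation~\ref{obs:f*} for divisibility of $f$, and via power-closedness of $(f)^{(*)}$ (or, alternatively, via the $\gcd$/substitution commutation of Observation~\ref{obs:gcd-lcm-i}) for being powered — steps the paper leaves implicit; your caveat about uniqueness being up to units is likewise a reasonable clarification rather than a departure.
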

{\sc Remark:} Needless to say, the existence of such a maximal polynomial as
described in Claim~\ref{clm:max} is a direct consequence of
Corollary~\ref{cor:lattice}. However, that $D(f)$ is a lattice is not needed
to prove Claim~\ref{clm:max} as was done.

If $f(x) = x^nc(x)$ where $c(0)\neq 0$, then by Claim~\ref{clm:max} or by
Observation~\ref{obs:f*} we clearly have
$f^{(*)} = x^nc^{(*)}$ and so $f$ is powered iff $c$ is powered.
We will therefore, unless otherwise stated,
from now on in this subsection concentrate on
$f\in {\comps}[x]^{\pm}$ with no zero root. In this case,
since $f^{(*)}$ is powered, we have by Observation~\ref{obs:roots-of-unity}
that each root of $f^{(*)}$ is root of unity and therefore has the
form described in Proposition~\ref{prp:1st-char}. By Claim~\ref{clm:max}
we then have that  
\begin{equation}
\label{eqn:Phi}  
f^{(*)}(x) = \prod_{n\geq 1}\phi_n(x)^{p_n}
\end{equation}
where the exponents 
$p_n$ are uniquely determined by Claim~\ref{clm:max} and can be
computed as follows: For a given $f\in {\comps}[x]^{\pm}$ we can write
$f = \Phi_f r_f$ where $\Phi_f = \prod_{n\geq 1}\phi_n^{k_n}$ and $r_f(x)$ is not
divisible by any cyclotomic polynomial. Note that $f^{(*)} = \Phi_f^{(*)}$.
By Claim~\ref{clm:max} and Proposition~\ref{prp:1st-char} we have the
following.
\begin{proposition}
\label{prp:pn}
If $f\in {\comps}[x]^{\pm}$ and $\Phi_f = \prod_{n\geq 1}\phi_n^{k_n}$, then
the exponents $p_n$ from (\ref{eqn:Phi}) can be computed
by $p_n := \min(\{k_d : d|n\})$.
\end{proposition}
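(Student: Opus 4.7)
The plan is to combine Claim~\ref{clm:max} (which characterizes $f^{(*)}$ as the unique maximal powered divisor of $f$ up to units) with Proposition~\ref{prp:1st-char} (which characterizes powered polynomials via their cyclotomic factorizations). First I would argue that any powered divisor of $f$ must in fact divide $\Phi_f$: by Observation~\ref{obs:roots-of-unity} every root of a powered polynomial is a root of unity, so any powered divisor $g$ of $f$ is a product of cyclotomic polynomials, and since $r_f$ has no cyclotomic factors, $g \mid \Phi_f$. Writing $g = \prod_{n\geq 1}\phi_n^{q_n}$, this gives the a priori bound $q_n \leq k_n$ for every $n$.

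Next I would unpack the condition that $g$ be powered using Proposition~\ref{prp:1st-char}: whenever $m \mid n$ we must have $q_m \geq q_n$. Combining the two constraints, for each $n$ and each divisor $d$ of $n$ we get $q_n \leq q_d \leq k_d$, so every admissible exponent sequence satisfies $q_n \leq \min\{k_d : d \mid n\}$. Thus $p_n := \min\{k_d : d \mid n\}$ is an upper bound on the exponent of $\phi_n$ in any powered divisor of $f$.

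To finish I would verify that this choice $p_n$ is itself admissible, so that Claim~\ref{clm:max} forces equality. The bound $p_n \leq k_n$ is immediate because $n \mid n$. The monotonicity $p_m \geq p_n$ for $m \mid n$ follows from the set containment $\{d : d \mid m\} \subseteq \{d : d \mid n\}$, since the minimum over a larger set is no larger. Hence $\prod_{n\geq 1}\phi_n^{p_n}$ is a powered divisor of $\Phi_f$ (and so of $f$) whose exponent sequence dominates that of every other powered divisor, and Claim~\ref{clm:max} identifies it with $f^{(*)}$. I do not anticipate a substantive obstacle; the argument is a bookkeeping translation of the monotonicity content of Proposition~\ref{prp:1st-char} into a closed-form minimum, and the only care required is to match the directions of the two competing inequalities.
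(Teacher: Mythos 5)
Your argument is correct and is exactly the one the paper sketches: the paper derives Proposition~\ref{prp:pn} by citing Claim~\ref{clm:max} and Proposition~\ref{prp:1st-char}, and your proof simply spells out the same two constraints (the $q_n\le k_n$ divisibility bound and the $m\mid n\Rightarrow q_m\ge q_n$ monotonicity) and checks that $p_n=\min\{k_d : d\mid n\}$ is both an upper bound and itself admissible.
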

\begin{example}
\label{exa:f*}  
Consider the polynomial $f$ given by
\[
f(x) = \phi_{12}(x)^6\phi_{8}(x)^3\phi_{6}(x)^5\phi_{4}(x)^4
\phi_{3}(x)^2\phi_{2}(x)^3\phi_{1}(x)^4.
\]
Here $f = \Phi_f$, so by Proposition~\ref{prp:pn} we obtain
\[
f^{(*)}(x) = \phi_{12}(x)^2\phi_{8}(x)^3\phi_{6}(x)^2\phi_{4}(x)^3
\phi_{3}(x)^2\phi_{2}(x)^3\phi_{1}(x)^4.
\]
\end{example}
If $I$ and $J$ are ideals of ${\comps}[x]^{\pm}$, a PID, then
$I = (f)$ and $J = (g)$ for some polynomials $f$ and $g$. In this
case we have
\[
(I\cap J)^{(*)} = ((f)\cap (g))^{(*)} = (\lcm(f,g))^{(*)}
= (\lcm(f,g)^{(*)}),
\]
and 
\[
I^{(*)}\cap J^{(*)} = (f^{(*)})\cap (g^{(*)}) = (\lcm(f^{(*)},g^{(*)}).
\]
With this in mind, Proposition~\ref{prp:pn} provides a recipe for
examples of ideals $I$ and $J$ of 
${\comps}[x]^{\pm}$ such that the containment
$(I\cap J)^{(*)}\subset I^{(*)}\cap J^{(*)}$
is strict, namely whenever we have polynomials $f,g\in {\comps}[x]^{\pm}$
neither of which are powered, but where their least common multiple
$\lcm(f,g)$ is powered.
\begin{example}
\label{exa:42121}  
Consider the polynomials 
$f = \phi_4^2\phi_2\phi_1^2$ and $g = \phi_2^2\phi_1$ from ${\comps}[x]^{\pm}$.
By Proposition~\ref{prp:1st-char} neither $f$ nor $g$ are powered. However,
$\lcm(f,g) = \phi_4^2\phi_2^2\phi_1^2$ is powered and hence we obtain here
$(\lcm(f,g))^{(*)} = (\lcm(f,g)) = (\phi_4^2\phi_2^2\phi_1^2)$. However,
by Proposition~\ref{prp:pn} we obtain $f^{(*)} = \phi_4\phi_2\phi_1^2$
and $g^{(*)} = \phi_2\phi_1$ and so
$(f)^{(*)}\cap (g)^{(*)} = (\lcm(f^{(*)},g^{(*)})) = (\phi_4\phi_2\phi_1^2)$,
and so the containment
$(I\cap J)^{(*)}\subset I^{(*)}\cap J^{(*)}$ is indeed strict. Such
examples then complement the ones given
in Examples~\ref{exa:yax-ybx} and~\ref{exa:zaxby-cont}
as they demonstrate that even in the case for PIDs we cannot force equality.
\end{example}
As with the power-closure, if $I$ is an ideal of
$R = {\comps}[x]$ or $R^{\pm} = {\comps}[x]^{\pm}$, then $I = (f)$ is
principal and hence also dually its power-interior 
$I^{(\circ)} = (f^{(\circ)})$ for some $f^{(o)}$.
By the definition of $f^{(*)}$ here above
and Observation~\ref{obs:close-int} we have that
\[
(f^{(\circ)(*)}) = (f^{(\circ)})^{(*)}= ((f)^{(\circ)})^{(*)}
= (f)^{(\circ)} = (f^{(\circ)}),
\]
and so $f^{(\circ)}$ is powered. By Observation~\ref{obs:MC-below}
we have $(f^{(\circ)}) = \{ g : g^{(i)}\in (f) \mbox{ for each }i\in{\nats}\}$
and so by Observation~\ref{obs:f*} that $(f^{(\circ)}) = (g : f|g^{(*)})$.
From this we get the following straightforward yet notable equivalence, namely
for polynomials $f$ and $g$ in $R$ or $R^{\pm}$ we have
\begin{equation}
\label{eqn:fg}
f | g^{(*)} \Leftrightarrow f^{(\circ)} | g .
\end{equation}
For $g = f^{(\circ)}$ we get from (\ref{eqn:fg}) that $f | (f^{(\circ)})^{(*)}$
and so $f | f^{(\circ)}$, since $f^{(\circ)}$ is powered. On the other hand if
$f | g$ where $g$ is powered, then $f | g^{(*)}$ and so by (\ref{eqn:fg})
$f^{(\circ)} | g$. This yields the following description of $f^{(\circ)}$
which is analogous to that of $f^{(*)}$ in Claim~\ref{clm:max}.
\begin{claim}
\label{clm:min}
$f^{(\circ)}$ is the unique minimum polynomial $g$ w.r.t.~divisibility 
that satisfies (i) $f|g$ and (ii) $g$ is powered.
\end{claim}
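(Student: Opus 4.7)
The plan is to derive Claim~\ref{clm:min} as an immediate corollary of the equivalence (\ref{eqn:fg}) and the fact, already recorded in the discussion preceding the claim, that $f^{(\circ)}$ is itself a power-polynomial. Both conditions (i) and (ii) for the candidate $g = f^{(\circ)}$ come essentially for free; minimality then follows by feeding an arbitrary divisor-candidate into (\ref{eqn:fg}).

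First I would verify property (ii): the identity $(f^{(\circ)})^{(*)} = f^{(\circ)}$ was computed just before the claim (using Observation~\ref{obs:close-int}), and the principal ideal $(h)$ is power-closed precisely when $h$ is powered, so $f^{(\circ)}$ is a power-polynomial. Next I would verify property (i) by substituting $g = f^{(\circ)}$ into (\ref{eqn:fg}). The right-hand side reads $f^{(\circ)} \mid f^{(\circ)}$, which is trivially true, so the left-hand side gives $f \mid (f^{(\circ)})^{(*)} = f^{(\circ)}$.

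For minimality, suppose $g$ is any polynomial satisfying (i) and (ii), that is, $f \mid g$ and $g$ is powered. Being powered means $g^{(*)} = g$, so the assumption $f \mid g$ is the same as $f \mid g^{(*)}$. Applying the right-to-left direction of (\ref{eqn:fg}) then yields $f^{(\circ)} \mid g$, as required. Uniqueness up to units is standard in a PID: two minimum elements with respect to divisibility must divide each other and hence be associates, so the generator $f^{(\circ)}$ of the principal ideal $I^{(\circ)}$ is determined up to a unit in $R$ or $R^{\pm}$.

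Essentially nothing here is hard: the proof is a bookkeeping exercise that unpacks the biconditional (\ref{eqn:fg}) in the two natural directions. The only step that might deserve a sentence of care is the passage from ``the ideal $I^{(\circ)}$ is power-closed'' to ``its generator $f^{(\circ)}$ is powered,'' but this is immediate from the definition of powered together with the observation that a principal ideal $(h)$ of $R$ or $R^{\pm}$ is power-closed iff $h \mid h^{(i)}$ for every $i\in\nats$. Thus there is no real obstacle beyond recording the two implications cleanly.
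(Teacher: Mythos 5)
Your proof is correct and follows essentially the same route as the paper: establish that $f^{(\circ)}$ is powered via $(f^{(\circ)})^{(*)} = f^{(\circ)}$, plug $g=f^{(\circ)}$ into the equivalence (\ref{eqn:fg}) to obtain $f\mid f^{(\circ)}$, and plug a general powered $g$ with $f\mid g$ into (\ref{eqn:fg}) to obtain $f^{(\circ)}\mid g$. (One trivial slip: in the minimality step you invoke ``the right-to-left direction'' of (\ref{eqn:fg}), but going from $f\mid g^{(*)}$ to $f^{(\circ)}\mid g$ is the left-to-right implication.)
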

{\sc Remark:} As for $f^{(*)}$, the existence of such a minimal polynomial as
described in Claim~\ref{clm:min} is a direct consequence of
Corollary~\ref{cor:lattice}. However, that $U(f)$ is a lattice is not needed
to prove Claim~\ref{clm:min}.

Again, if $f(x) = x^nc(x)$ where $c(0)\neq 0$, then by Claim~\ref{clm:min}
we clearly have $f^{(\circ)} = x^nc^{(\circ)}$ and so, in particular, $f$ is
powered iff $c$ is powered. This justifies our continuing assumption that our
polynomials are from ${\comps}[x]^{\pm}$ and therefore with no zero root.
In this case, if $f|g$ where $g$ is powered, then
by Observation~\ref{obs:roots-of-unity} either $g = 0$ or
each root of $g$ is root of unity and therefore has the
form described in Proposition~\ref{prp:1st-char}. In particular,
if $f$ has roots that are not roots of unity, then $g = 0$ must hold.
Otherwise, we can assume each root of $f$ to be a root of unity, in which
case $f$ has the form described in Observation~\ref{obs:roots-of-unity},
namely $f(x) = (x-\rho_1)^{a_1}(x-\rho_2)^{a_2}\cdots(x-\rho_k)^{a_k}$.
To compute $f^{(\circ)}$ as described in Claim~\ref{clm:min}, we see
that $f^{(\circ)}$ must have the form described in
Proposition~\ref{prp:1st-char}. For each $n\in\nats$ let ${\ell}_n$
be the largest
multiplicity of a primitive $n$-root of unity in $f$. In this case $f^{(\circ)}$
must be divisible by $\Lambda_f$ given by
\begin{equation}
\label{eqn:Lf}  
\Lambda_f = \prod_{n\geq 1}\phi_n^{{\ell}_n}.
\end{equation}
Note that $f|\Lambda_f$ and $f^{(\circ)}$ is the minimum polynomial
w.r.t.~divisibility that is powered and is divisible by $\Lambda_f$, that is
to say $f^{(\circ)} = \Lambda_f^{(\circ)}$.
By Claim~\ref{clm:max} and Proposition~\ref{prp:1st-char} we have the
following.
\begin{proposition}
\label{prp:qn}
If $f\in {\comps}[x]^{\pm}$ and $\Lambda_f$ is as in (\ref{eqn:Lf}),
then $f^{(\circ)} = \prod_{n\geq 1}\phi_n^{q_n}$ where the exponents
$q_n$ are computed by $q_n := \max(\{{\ell}_m : n|m \})$.
\end{proposition}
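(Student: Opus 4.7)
My plan is to reduce the proposition to a short combinatorial minimization by first replacing $f$ with $\Lambda_f$, then invoking Proposition~\ref{prp:1st-char} to parametrize the candidate powered multiples.

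First I would observe that $f\mid\Lambda_f$: by the definition of $\ell_n$, the cyclotomic polynomial $\phi_n$, with each primitive $n$-th root of unity appearing as a simple root, is raised in $\Lambda_f$ to a power at least as large as the multiplicity with which any such root occurs in $f$. Conversely, if $g$ is \emph{any} powered polynomial with $f\mid g$, then by Proposition~\ref{prp:1st-char} we may write $g = \prod_{n\geq 1}\phi_n^{a_n}$ (up to a unit), so every primitive $n$-th root of unity occurs in $g$ with one and the same multiplicity $a_n$. Since $f\mid g$, this forces $a_n\geq \ell_n$ for every $n$, i.e.\ $\Lambda_f\mid g$. Hence the powered multiples of $f$ and of $\Lambda_f$ coincide, and Claim~\ref{clm:min} gives $f^{(\circ)} = \Lambda_f^{(\circ)}$.

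Next I would combine this with Proposition~\ref{prp:1st-char} once more. Writing $f^{(\circ)} = \prod_{n\geq 1}\phi_n^{q_n}$, the fact that $f^{(\circ)}$ is powered is equivalent to the monotonicity condition $q_m\geq q_n$ whenever $m\mid n$, while the containment $\Lambda_f\mid f^{(\circ)}$ is equivalent to $q_n\geq \ell_n$ for every $n$. Because the cyclotomic polynomials $\phi_n$ are pairwise coprime, minimality of $f^{(\circ)}$ in the divisibility order is the same as minimality of each exponent $q_n$ individually, subject to these two families of constraints.

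Finally I would verify that $q_n := \max\{\ell_m : n\mid m\}$ is exactly this pointwise minimum. Taking $m=n$ gives $q_n\geq \ell_n$; and if $m\mid n$, then $\{k\in\nats : n\mid k\}\subseteq \{k\in\nats : m\mid k\}$, which yields $q_m\geq q_n$, so the constraints are satisfied. Conversely, for any admissible choice $(q_n')$, the monotonicity condition applied to every $k$ with $n\mid k$ gives $q_n'\geq q_k'\geq \ell_k$, whence $q_n'\geq \max\{\ell_k : n\mid k\} = q_n$. The only step requiring any genuine work is the reduction $f^{(\circ)} = \Lambda_f^{(\circ)}$, which in turn rests on the uniform-multiplicity structure of powered polynomials provided by Proposition~\ref{prp:1st-char}; once that is in hand, the remaining minimization is purely formal.
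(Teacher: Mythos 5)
Your proof is correct and follows essentially the same route as the paper. The paper's own justification is extremely compressed (a single sentence noting $f\mid\Lambda_f$, asserting $f^{(\circ)}=\Lambda_f^{(\circ)}$, and invoking Claim~\ref{clm:min} together with Proposition~\ref{prp:1st-char}); your version spells out the two halves of that argument cleanly. In particular, you correctly identify that Proposition~\ref{prp:1st-char} forces any powered multiple $g$ of $f$ to have uniform exponents $a_n$ on each cyclotomic factor, so $f\mid g$ is equivalent to $a_n\geq\ell_n$ for all $n$, which is exactly $\Lambda_f\mid g$; and you reduce the remaining minimization to a pointwise problem on exponents, which is legitimate precisely because the $\phi_n$ are pairwise coprime. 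The only point worth noting is that both you and the paper tacitly restrict to the case where every root of $f$ is a root of unity (so that $f\mid\Lambda_f$ and $f^{(\circ)}\neq 0$), in line with the discussion preceding the proposition in the paper; if $f$ had a root not a root of unity, $f^{(\circ)}=0$ and the displayed formula would not apply. Under that standing assumption, your argument is complete.
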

\begin{example}
\label{exa:fo}
Consider again the polynomial $f$ from Example~\ref{exa:f*}
given by
\[
f(x) = \phi_{12}(x)^6\phi_{8}(x)^3\phi_{6}(x)^5\phi_{4}(x)^4
\phi_{3}(x)^2\phi_{2}(x)^3\phi_{1}(x)^4.
\]
Here $f = \Lambda_f$, so by Proposition~\ref{prp:qn} we obtain
\[
f^{(\circ)}(x) = \phi_{12}(x)^6\phi_{8}(x)^3\phi_{6}(x)^6\phi_{4}(x)^6
\phi_{3}(x)^6\phi_{2}(x)^6\phi_{1}(x)^6.
\]
\end{example}
If $I$ and $J$ are ideals of ${\comps}[x]^{\pm}$, a PID, then
$I = (f)$ and $J = (g)$ for some polynomials $f$ and $g$. In this
case we have
\[
(I + J)^{(\circ)} = ((f) + (g))^{(\circ)} = (\gcd(f,g))^{(\circ)}
= (\gcd(f,g)^{(\circ)}),
\]
and 
\[
I^{(\circ)} + J^{(\circ)} = (f^{(\circ)}) + (g^{(\circ)})
= (\gcd(f^{(\circ)},g^{(\circ)}).
\]
With this in mind, Proposition~\ref{prp:qn} provides a recipe for
examples of ideals $I$ and $J$ of 
${\comps}[x]^{\pm}$ such that the containment
$I^{(\circ)} + J^{(\circ)}\subset(I + J)^{(\circ)}$
is strict, namely whenever we have polynomials $f,g\in {\comps}[x]^{\pm}$
neither of which are powered, but where their greatest common denominator 
$\gcd(f,g)$ is powered.
\begin{example}
\label{exa:42121o}
Consider the polynomials 
$f = \phi_4^2\phi_2\phi_1^2$ and $g = \phi_2^2\phi_1$ from ${\comps}[x]^{\pm}$.
By Proposition~\ref{prp:1st-char} neither $f$ nor $g$ are powered. However,
$\gcd(f,g) = \phi_2\phi_1$ is powered and hence we obtain here
$(\gcd(f,g))^{(\circ)} = (\gcd(f,g)) = (\phi_2\phi_1)$. However,
by Proposition~\ref{prp:qn} we obtain $f^{(\circ)} = \phi_4^2\phi_2^2\phi_1^2$
and $g^{(\circ)} = \phi_2^2\phi_1^2$ and so
$(f)^{(\circ)} + (g)^{(\circ)} = (\gcd(f^{(\circ)},g^{(\circ)}))
= (\phi_2^2\phi_1^2)$, and so for $I = (f)$ and $J = (g)$
we get that the containment 
$I^{(\circ)} + J^{(\circ)}\subset (I + J)^{(\circ)}$ is indeed strict. Such
examples then complement the trivial one given in
Example~\ref{exa:IplusJ-circ} and provide a complete class of such
examples for PIDs.
\end{example}
{\sc Remark:} For a given polynomial $f(x)$ both the polynomials
$f^{(*)}$ and $f^{(\circ)}$ are powered, and can therefore readily be put
into the form consisting of products of $\psi_{A_i}(x)$
as stated in Theorem~\ref{thm:2nd-char}. The antichains $A_1,\ldots,A_h$
in ${\nats}$ are contained in a finite poset determined by
$f(x)\in {\comps}[x]^{\pm}$. The following yields a bound on how large
this poset really is.
\begin{corollary}
\label{cor:psi-downset}
For a given polynomial $f(x)\in{\comps}[x]^{\pm}$
when writing the power-polynomials $f^{(*)}$ and
$f^{(\circ)}$ in terms of $\psi_{A_i}(x)$ as in
Theorem~\ref{thm:2nd-char}, then the nested antichains
$A_1\prec A_2 \prec \cdots \prec A_h$, and their corresponding downsets,
are all contained in the downset $D(A_h)$.

For $f^{(*)}$ this downset is given by 
$D(A_h) = D^{(*)}(A_h) = \bigcup_{n\in P^{(*)}(f)}D(n)\subseteq {\nats}$
w.r.t.~divisibility, where $P^{(*)}(f)\subseteq {\nats}$ is the finite set
of positive integers $n\in\nats$ such that $f$ is divisible by the cyclotomic
polynomials $\phi_d$ for each $d|n$.

For $f^{(\circ)}$ this downset is given by 
$D(A_h) = D^{(\circ)}(A_h) = \bigcup_{n\in P^{(\circ)}(f)}D(n)\subseteq {\nats}$
w.r.t.~divisibility, where $P^{(\circ)}(f)\subseteq {\nats}$ is the finite set
of positive integers $n\in\nats$ such that $f$ contains a primitive $n$-th
root of unity.
\end{corollary}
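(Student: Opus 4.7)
The plan is to combine the uniqueness of the nested decomposition in Theorem~\ref{thm:2nd-char} with the explicit cyclotomic formulas for $f^{(*)}$ and $f^{(\circ)}$ supplied by Propositions~\ref{prp:pn} and~\ref{prp:qn}. The first half of the corollary, that every $A_i$ and every $D(A_i)$ lies inside $D(A_h)$, will follow immediately from Definition~\ref{def:antichain-poset}: the hypothesis $A_1\prec A_2\prec\cdots\prec A_h$ unwraps to $D(A_1)\subseteq D(A_2)\subseteq\cdots\subseteq D(A_h)$, so in particular $A_i\subseteq D(A_i)\subseteq D(A_h)$ for every $i$.

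The substantive content is identifying $D(A_h)$ concretely, and I would treat $f^{(*)}$ and $f^{(\circ)}$ in parallel after establishing a single general fact. Given any power-polynomial $g=\prod_{n\geq 1}\phi_n^{a_n}$ as in Proposition~\ref{prp:1st-char}, let $P(g)=\{n:a_n\geq 1\}$. That proposition guarantees $P(g)$ is downward closed under divisibility. Tracing back through the construction (\ref{eqn:psi}) that sets up Theorem~\ref{thm:2nd-char}, the outermost (i.e., largest) index set in the nested chain of supports is $P(g)$ itself, and under the re-indexing $A_1\prec\cdots\prec A_h$ this corresponds to $A_h=\max(P(g))$, so $D(A_h)=P(g)=\bigcup_{n\in P(g)}D(n)$.

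It then remains to unwind $P(f^{(*)})$ and $P(f^{(\circ)})$. For the power-closure, Proposition~\ref{prp:pn} gives $a_n=p_n=\min\{k_d:d\mid n\}$, so $n\in P(f^{(*)})$ iff $\phi_d$ divides $f$ for every divisor $d$ of $n$, which is precisely the defining condition of $P^{(*)}(f)$; hence $D(A_h)=\bigcup_{n\in P^{(*)}(f)}D(n)$. For the power-interior, Proposition~\ref{prp:qn} gives $a_n=q_n=\max\{\ell_m:n\mid m\}$, so $n\in P(f^{(\circ)})$ iff there exists $m$ with $n\mid m$ and $\ell_m\geq 1$, i.e., iff $n$ divides the order $m$ of some root of unity appearing in $f$; this is exactly $\bigcup_{m\in P^{(\circ)}(f)}D(m)$.

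There is no real mathematical obstacle here; the main point of care is the indexing convention. The original derivation in (\ref{eqn:psi}) produces a chain $P_1\supset P_2\supset\cdots\supset P_h$, so one must verify that under Theorem~\ref{thm:2nd-char}'s relabeling into $A_1\prec\cdots\prec A_h$ the largest downset ends up carrying the index $h$ (as in Example~\ref{exa:phi-fraction}, where the top factor is $\psi_{\{12,8\}}^2$). Once that matching is noted the three identifications fit together mechanically.
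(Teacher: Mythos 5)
Your argument follows the paper's own proof: the first claim falls directly out of Definition~\ref{def:antichain-poset}, and the identifications of $D(A_h)$ for $f^{(*)}$ and $f^{(\circ)}$ come from Propositions~\ref{prp:pn} and~\ref{prp:qn} respectively, exactly as the authors cite. Your unwinding of the indexing between the nested chain $P_1\supset\cdots\supset P_h$ in (\ref{eqn:psi}) and the ordering $A_1\prec\cdots\prec A_h$ is a helpful clarification of a detail the paper leaves implicit, but it is the same argument.
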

\begin{proof}
That the nested antichains and their corresponding downsets are all
contained in the downset $D(A_h)$ of $A_h$ is clear by the mere definition
of the partial order $\preceq$ from Definition~\ref{def:antichain-poset}.

That $D(A_h) = D^{(*)}(A_h)$ and $D(A_h) = D^{(\circ)}(A_h)$ have the
described form follows from Propositions~\ref{prp:pn} and~\ref{prp:qn}.
\end{proof}

\section{Applications to general principal ideals}
\label{sec:gen-principal}

In this section we will use results in the previous section
to characterize all principal power-closed ideals in 
the usual polynomial ring $R = \comps[x_1,\ldots,x_d]$
as well as in the corresponding Laurent polynomial ring
$R^{\pm}= \comps[x_1,\ldots,x_d]^{\pm}$. For a $d$-tuple $\tilde{p}\in {\ints}^d$
let $\supp(\tilde{p}) = \{ i\in [d] : p_i\neq 0\}$ denote
the {\em support} of $\tilde{p}$. For an arbitrary collection of tuples
$\tilde{p}_1,\ldots,\tilde{p}_k$ let $\gcd(\tilde{p}_1,\ldots,\tilde{p}_k)$
denote the greatest common divisor of all the coordinates collectively:
$\gcd(\tilde{p}_1,\ldots,\tilde{p}_k) 
= \gcd(\{ p_{i\/j} : 1\leq i\leq k, \ \ 1\leq j\leq d\})$.
\begin{definition}
\label{def:disjointed}
Call a pair of $d$-tuples $\tilde{p},\tilde{q}\in {\ints}^d$ 
{\em primitive} if 
(i) $\supp(\tilde{p})\cap\supp(\tilde{q})=\emptyset$ and 
(ii) $\gcd(\tilde{p},\tilde{q}) = 1$.

Call a binomial $\tilde{x}^{\tilde{p}} - \tilde{x}^{\tilde{q}} \in R^{\pm}$
{\em power primitive} if $\tilde{p}$ and $\tilde{q}$ 
are primitive. 
\end{definition}
Consider an arbitrary and fixed power-primitive binomial $b\in R^{\pm}$.
By renaming the variables $x_1,\ldots,x_d$ we can assume
that $b$ has the form 
$b = x_1^{p_1}\cdots x_{\ell}^{p_h} - y_1^{q_1}\cdots y_k^{q_k}$
where $\gcd(\tilde{p},\tilde{q}) = 1$ and where all the coordinates
$p_i, q_j$ are positive integers.
Note that 
\[
b = \upsilon
\left(\frac{y_1^{q_1}\cdots y_k^{q_k}}{x_1^{p_1}\cdots x_h^{p_h}} - 1\right)
\]
for some unit $\upsilon \in R^{\pm}$. Clearly $b$ is irreducible
in $R^{\pm}$ if and only if 
$\frac{y_1^{q_1}\cdots y_k^{q_k}}{x_1^{p_1}\cdots x_h^{p_h}} - 1$ is irreducible
in $R^{\pm}$. The valuation 
\[
\nu(b) := 
\sum_{i=1}^hp_i + \sum_{i=1}^kq_i \in \nats
\]
is well defined for all binomials in $R^{\pm}$
of the form $\frac{y_1^{q_1}\cdots y_k^{q_k}}{x_1^{p_1}\cdots x_h^{p_h}} - 1$.
\begin{lemma}
\label{lmm:irreducible-d}
A binomial $b = x_1^{p_1}\cdots x_{\ell}^{p_h} - y_1^{q_1}\cdots y_k^{q_k} \in R$
is irreducible in both $R$ and $R^{\pm}$ if and only if
$\gcd(\tilde{p},\tilde{q}) = 1$.
\end{lemma}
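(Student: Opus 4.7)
The plan is to prove the equivalence by handling the reducible direction explicitly and reducing the irreducibility direction to a standard fact about group rings of free abelian groups, then transferring the result from $R^{\pm}$ back to $R$.

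If $d := \gcd(\tilde{p},\tilde{q}) > 1$, I would write $\tilde{p} = d\tilde{p}'$ and $\tilde{q} = d\tilde{q}'$ with $\tilde{p}',\tilde{q}'$ still non-negative integer tuples, set $A = \tilde{x}^{\tilde{p}'}$ and $B = \tilde{y}^{\tilde{q}'}$, and use the factorization
\[
b \;=\; A^d - B^d \;=\; (A-B)\bigl(A^{d-1} + A^{d-2}B + \cdots + B^{d-1}\bigr).
\]
Since $A$ and $B$ are non-constant monomials in disjoint sets of variables, each of the two factors has at least two distinct monomial summands and so is not a monomial; hence it is a non-unit in both $R$ and $R^{\pm}$, giving a nontrivial factorization in both rings.

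For the converse, assume $\gcd(\tilde{p},\tilde{q}) = 1$. I would first establish irreducibility in $R^{\pm}$. In $R^{\pm}$ one factors off the unit $\tilde{x}^{\tilde{p}}$ to rewrite $b$ (up to a unit) as $\tilde{z}^{\tilde{v}} - 1$, where $\tilde{v} = (-\tilde{p},\tilde{q}) \in \ints^{h+k}$ has $\gcd$ of coordinates equal to $1$, i.e., is a primitive vector. Viewing $R^{\pm}$ as the group algebra $\comps[\ints^{h+k}]$, the natural surjection onto $\comps[\ints^{h+k}/\langle \tilde{v}\rangle]$ has kernel $(\tilde{z}^{\tilde{v}} - 1)$. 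Primitivity of $\tilde{v}$ means $\ints^{h+k}/\langle \tilde{v}\rangle \cong \ints^{h+k-1}$ is torsion-free, so the quotient is isomorphic to a Laurent polynomial ring in $h+k-1$ variables, which is an integral domain. Therefore $(b)$ is prime in $R^{\pm}$, and $b$ is irreducible there. To descend to $R$: any factorization $b = fg$ in $R$ is also a factorization in $R^{\pm}$, so one of $f,g$ is a unit in $R^{\pm}$, i.e., a nonzero monomial. Since it lies in $R$ its exponents are non-negative, and since the two terms of $b$ have disjoint support, any such monomial divisor of $b$ must divide each term separately, forcing all its exponents to be $0$; that is, the factor is a constant, contradicting non-triviality.

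I expect the main technical step to be verifying that the kernel of the surjection $R^{\pm} \twoheadrightarrow \comps[\ints^{h+k}/\langle \tilde{v}\rangle]$ is exactly the principal ideal $(\tilde{z}^{\tilde{v}} - 1)$. This is standard but needs a short calculation: any element $\tilde{z}^{\tilde{u}} - \tilde{z}^{\tilde{u}'}$ in the kernel satisfies $\tilde{u}-\tilde{u}' = n\tilde{v}$ for some $n \in \ints$, and one rewrites it as a multiple of $\tilde{z}^{\tilde{v}} - 1$ using the telescoping identity $\tilde{z}^{n\tilde{v}} - 1 = (\tilde{z}^{\tilde{v}} - 1)\sum_{i=0}^{n-1}\tilde{z}^{i\tilde{v}}$ for $n \geq 1$, with the analogous expression involving $\tilde{z}^{-\tilde{v}} - 1$ for $n < 0$. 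An attractive alternative that avoids group-ring language is to extend $\tilde{v}$ to a $\ints$-basis of $\ints^{h+k}$; the resulting matrix in $GL_{h+k}(\ints)$ induces an automorphism of $R^{\pm}$ that carries $\tilde{z}^{\tilde{v}} - 1$ to $z_1 - 1$, which is plainly irreducible.
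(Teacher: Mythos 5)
Your argument is correct, but it follows a genuinely different route from the paper's. The paper proves irreducibility in $R^{\pm}$ by an induction on the ``valuation'' $\nu(b) = \sum p_i + \sum q_i$: assuming (say) $q_1 \ge p_1$, it performs the invertible substitution $x_1' := x_1y_1$ in $R^{\pm}$, which carries $b$ (up to a unit) to a binomial $b'$ with the same gcd condition but strictly smaller valuation, and bottoms out at $\nu = 1$. This is in effect a multivariate Euclidean algorithm on the exponent vector, performed one coordinate at a time. Your approach instead packages the exponent data as a single primitive vector $\tilde v = (-\tilde p, \tilde q)$ and either identifies $(\tilde z^{\tilde v}-1)$ with the kernel of the group-algebra surjection $\comps[\ints^{h+k}] \twoheadrightarrow \comps[\ints^{h+k}/\langle\tilde v\rangle]$, or extends $\tilde v$ to a $\ints$-basis and uses the induced ring automorphism to reduce to $z_1-1$. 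The structural argument buys conceptual clarity (primitivity of $\tilde v$ is visibly the whole story, and the quotient ring is manifestly a domain) at the cost of invoking a bit more machinery (torsion-free quotients, Smith normal form / basis extension, or the UFD fact that irreducibles stay irreducible under Laurent extension); the paper's induction stays entirely elementary but is less transparent about why the gcd condition is the right one. Your descent from $R^{\pm}$ to $R$ is the same as the paper's (implicit) one. One small gap to note: you work in $\comps[\ints^{h+k}]$, the Laurent subring generated by the variables actually occurring in $b$, whereas $R^{\pm}$ has $d\ge h+k$ variables; you should add a sentence observing that an irreducible element of a Laurent subring $S$ stays irreducible in $S[t^{\pm}]$ (since $S[t^{\pm}]/(p) \cong (S/p)[t^{\pm}]$ is a domain), so irreducibility propagates to the remaining variables.
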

\begin{proof}
It is evident that the condition $\gcd(\tilde{p},\tilde{q}) = 1$ is necessary
for $b$ to be irreducible.
Also, it is clear that every binomial $b$ with $\nu(b) = 1$ is irreducible.

If $q_1\geq p_1$ then in terms of new variables $x_1',x_2,\ldots,x_h$
and $y_1,\ldots,y_k$, where $x_1' := x_1y_1$, we get that $b$ 
is irreducible in $R^{\pm}$ if and only if 
\[
b' 
= \frac{y_1^{q_1}\cdots y_k^{q_k}}{(x_1y_1)^{p_1}\cdots x_h^{p_h}} - 1
= \frac{y_1^{q_1-p_1}\cdots y_k^{q_k}}{x_1^{p_1}\cdots x_h^{p_h}} - 1
\]
is irreducible in $R^{\pm}$. Since 
$\gcd(\tilde{p},q_1-p_1,q_2,\ldots,q_k) = \gcd(\tilde{p},\tilde{q}) = 1$
and $\nu(b') = \nu(b) - p_1 < \nu(b)$, it follows by induction 
on $\nu(b)$ that $b$ is irreducible in $R^{\pm}$ and hence also
in $R$.

The case when $p_1\geq q_1$ is treated in a similar fashion.
\end{proof}
By changing one of the variables $x_1,\ldots,x_h$, or one of the
variables $y_1,\ldots,y_k$, to a complex scalar multiple of itself
we get the following corollary.
\begin{corollary}
\label{cor:irreducible-d}
If $c\in\comps\setminus\{0\}$, then 
a polynomial $f = \tilde{x}^{\tilde{q}} - c\tilde{x}^{\tilde{p}}$
is irreducible in $R^{\pm}$ if and only if $\tilde{p}$ and $\tilde{q}$
form a primitive pair of $d$-tuples.
\end{corollary}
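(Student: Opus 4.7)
The plan is to deduce this from Lemma~\ref{lmm:irreducible-d} by absorbing the scalar $c$ into one of the variables via a $\comps$-algebra automorphism of $R^{\pm}$. Since $(\tilde{p}, \tilde{q})$ is primitive, the supports are disjoint, so in particular one of $\supp(\tilde{p})$, $\supp(\tilde{q})$ is nonempty (otherwise $f = 1 - c$ would be a unit or zero). By symmetry, assume $q_i \neq 0$ for some $i$; then disjointness forces $p_i = 0$.

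Next, I would use the algebraic closure of $\comps$ to pick $\beta\in\comps^{*}$ with $\beta^{q_i} = c$, and define $\phi\colon R^{\pm}\rightarrow R^{\pm}$ to be the $\comps$-algebra automorphism sending $x_i\mapsto\beta x_i$ and $x_j\mapsto x_j$ for $j\ne i$. Because $p_i = 0$, one computes
\[
\phi(f) = \beta^{q_i}\tilde{x}^{\tilde{q}} - c\beta^{p_i}\tilde{x}^{\tilde{p}}
= c\tilde{x}^{\tilde{q}} - c\tilde{x}^{\tilde{p}}
= c\bigl(\tilde{x}^{\tilde{q}} - \tilde{x}^{\tilde{p}}\bigr).
\]
Since $\phi$ is an automorphism of $R^{\pm}$ and $c\in\comps^{*}$ is a unit in $R^{\pm}$, $f$ is irreducible in $R^{\pm}$ if and only if the pure binomial $b := \tilde{x}^{\tilde{q}} - \tilde{x}^{\tilde{p}}$ is irreducible there. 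Because $\tilde{p}$ and $\tilde{q}$ already have disjoint supports, multiplying $b$ by a suitable Laurent-monomial unit (to clear any negative exponents) turns it into a binomial in $R$ of precisely the form analyzed in Lemma~\ref{lmm:irreducible-d}, without changing either its irreducibility in $R^{\pm}$ or the value of $\gcd(\tilde{p},\tilde{q})$.

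At that point Lemma~\ref{lmm:irreducible-d} directly gives irreducibility of $b$ in $R$ (hence also in $R^{\pm}$) if and only if $\gcd(\tilde{p},\tilde{q}) = 1$, which together with the already-assumed disjointness of supports is exactly the primitivity of $(\tilde{p},\tilde{q})$; this handles both implications simultaneously. There is no real obstacle, only bookkeeping: the substantive content is the standard root-extraction trick made possible by algebraic closure of $\comps$, and the only care needed is in the passage between $R$ and $R^{\pm}$ by unit multiplication, together with checking that the chosen substitution is genuinely an automorphism (its inverse is $x_i\mapsto\beta^{-1}x_i$).
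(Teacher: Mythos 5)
Your proof is correct and is essentially the same argument the paper intends: the paper's own (one-line) justification is "by changing one of the variables $x_1,\ldots,x_h$, or one of the variables $y_1,\ldots,y_k$, to a complex scalar multiple of itself," which is precisely your automorphism $x_i\mapsto\beta x_i$ with $\beta^{q_i}=c$. You've simply spelled out the bookkeeping (choosing the index $i$ with $p_i=0$, the unit multiplication to clear negative exponents, and the trivial check that $\phi$ is an automorphism) that the paper leaves implicit.
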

Note that, up to a unit in $R^{\pm}$ the polynomial  $f$ of the corollary
is an associate of 
$\tilde{\xi} - c$ where
$\tilde{\xi} = \tilde{x}^{\tilde{q}}/\tilde{x}^{\tilde{p}} \in R^{\pm}$.
\begin{lemma}
\label{lmm:ass}
Two irreducibles $\tilde{\xi} - c$ and $\tilde{\xi}' - c'$
with $c,c'\in {\comps}\setminus \{0\}$ 
are associates in $R^{\pm}$ if and only if
$(\tilde{\xi}',c') = (\tilde{\xi},c)$ or
$(\tilde{\xi}',c') = (\tilde{\xi}^{-1},c^{-1})$.
\end{lemma}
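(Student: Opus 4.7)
The plan is to translate associateness into a direct equation in $R^{\pm}$ and exploit the simple structure of units in the Laurent polynomial ring. The reverse implication is immediate: if $(\tilde{\xi}',c') = (\tilde{\xi},c)$ there is nothing to prove, and if $(\tilde{\xi}',c') = (\tilde{\xi}^{-1},c^{-1})$ then the identity
\[
\tilde{\xi}^{-1} - c^{-1} \;=\; -c^{-1}\tilde{\xi}^{-1}\,(\tilde{\xi} - c)
\]
exhibits the required unit, since $-c^{-1}\tilde{\xi}^{-1}$ is a nonzero scalar times a Laurent monomial.

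For the forward direction I would first record the standard fact that the group of units of $R^{\pm}$ consists of the elements $u\tilde{x}^{\tilde{m}}$ with $u\in\comps^{*}$ and $\tilde{m}\in\ints^{d}$; this follows from the $\ints^d$-grading of $R^{\pm}$. Writing $\tilde{\xi} = \tilde{x}^{\tilde{r}}$ and $\tilde{\xi}' = \tilde{x}^{\tilde{r}'}$ with $\tilde{r},\tilde{r}'\in\ints^{d}\setminus\{\tilde{0}\}$ (nonzero because otherwise the binomials would collapse to scalars and fail to be irreducible), the hypothesis that the two binomials are associates becomes
\[
u\,\tilde{x}^{\tilde{m}+\tilde{r}} \;-\; cu\,\tilde{x}^{\tilde{m}} \;=\; \tilde{x}^{\tilde{r}'} - c'
\]
for some unit $u\tilde{x}^{\tilde{m}}$. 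Both sides consist of exactly two distinct monomial summands, so the monomials on each side must match in one of two ways.

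The main step is the ensuing case analysis, which is the only place where something could conceivably go wrong. In Case~(a) the monomial $\tilde{x}^{\tilde{m}+\tilde{r}}$ on the left is identified with $\tilde{x}^{\tilde{r}'}$ on the right; comparing coefficients and exponents yields $\tilde{m} = \tilde{0}$, $u = 1$, $\tilde{r}' = \tilde{r}$ and $c' = c$, producing $(\tilde{\xi}',c') = (\tilde{\xi},c)$. In Case~(b) the monomial $\tilde{x}^{\tilde{m}+\tilde{r}}$ is matched with the constant term $-c'$ and $-cu\tilde{x}^{\tilde{m}}$ with $\tilde{x}^{\tilde{r}'}$; this forces $\tilde{m} = -\tilde{r}$, $u = -c'$, $\tilde{m} = \tilde{r}'$ and $cu = -1$, whence $\tilde{r}' = -\tilde{r}$ and $c' = c^{-1}$, i.e.\ $(\tilde{\xi}',c') = (\tilde{\xi}^{-1},c^{-1})$. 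One should also observe that power-primitivity of $\tilde{\xi}^{-1}$ follows from that of $\tilde{\xi}$ because negating $\tilde{r}$ merely swaps its positive and negative parts while preserving the gcd condition, so the second conclusion is a legitimate statement about a power-primitive binomial. Overall the argument reduces, after invoking the characterization of units, to matching two monomial terms against two monomial terms, and no real obstacle is expected.
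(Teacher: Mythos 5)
Your proof is correct and takes essentially the same approach as the paper: both reduce associateness to an equation involving a unit of $R^{\pm}$ (a scalar times a Laurent monomial), then settle the claim by matching the two monomial terms on each side, with the two possible matchings yielding exactly the two cases of the conclusion. Your version is somewhat more explicit about the exponent vectors and also spells out the easy reverse implication, which the paper leaves tacit.
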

\begin{proof}
Assume $\tilde{\xi} - c$ and $\tilde{\xi}' - c'$ are associates.
In that case there is a unit $\gamma \in {\comps}^*[x_1,\ldots,x_d]$
with $\tilde{\xi}' - c' = \gamma(\tilde{\xi} - c)$. Since $c'\in{\comps}^*$
then we have either $c' = \gamma c$ or $c' = -\gamma\tilde{\xi}$.

If $c' = \gamma c$, the we get
$\tilde{\xi}' = \gamma\tilde{\xi}$ and hence $\gamma = 1$, from
which we conclude that $(\tilde{\xi}',c') = (\tilde{\xi},c)$.

If $c' = -\gamma\tilde{\xi}$, then 
we have $\gamma = {-c'}\tilde{\xi}^{-1}$ and so
$\tilde{\xi}' - c' = {-c'}\tilde{\xi}^{-1}(\tilde{\xi} - c)
= -c' + c'c\tilde{\xi}^{-1}$ or $\tilde{\xi}' = c'c\tilde{\xi}^{-1}$.
From this last equation we conclude that $c'c =1$ and
$\tilde{\xi}' = \tilde{\xi}^{-1}$ and therefore
$(\tilde{\xi}',c') = (\tilde{\xi}^{-1},c^{-1})$.
\end{proof}    

Each arbitrary binomial $b = \tilde{x}^{\tilde{q}} - \tilde{x}^{\tilde{p}}$ 
in $R$ can be written as
$b = \tilde{x}^{\tilde{c}}(\tilde{x}^{\tilde{q}'} - \tilde{x}^{\tilde{p}'})$
where $\supp(\tilde{p}')\cap\supp(\tilde{q}') = \emptyset$.
Further, if $\gcd(\tilde{p}',\tilde{q}') = h\in\nats$, then
$\tilde{p}' = h\tilde{p}''$ and $\tilde{q}' = h\tilde{q}''$ 
where $\gcd(\tilde{p}'',\tilde{q}'') = 1$. Hence, we can write
an arbitrary binomial $b$ as 
\begin{equation}
\label{eqn:binom-factorization}
b = \tilde{x}^{\tilde{q}} - \tilde{x}^{\tilde{p}}
= \tilde{x}^{\tilde{c}}(\tilde{x}^{\tilde{q}'} - \tilde{x}^{\tilde{p}'})
= \tilde{x}^{\tilde{c}}((\tilde{x}^{\tilde{q}''})^h 
- (\tilde{x}^{\tilde{p}''})^h)
= \tilde{x}^{\tilde{c}}\prod_{i=0}^{h-1}
(\tilde{x}^{\tilde{q}''} - \rho^i\tilde{x}^{\tilde{q}''})
\end{equation}
where $\tilde{p}'', \tilde{q}''$ forms a primitive pair of $d$-tuples
and so each $\tilde{x}^{\tilde{q}''} - \rho^i\tilde{x}^{\tilde{q}''}$ 
is an irreducible polynomial by Corollary~\ref{cor:irreducible-d}.
Hence, any binomial has a factorization into irreducible polynomials
as given in (\ref{eqn:binom-factorization}).

Consider now a principal power-closed ideal $I = (f) \subseteq R^{\pm}$.
By Proposition~\ref{prp:principal} $f$ must divide
the polynomial $(\tilde{x}^{\tilde{p}_k} - \tilde{x}^{\tilde{p}_1})
  (\tilde{x}^{\tilde{p}_k} - \tilde{x}^{\tilde{p}_2})\cdots
(\tilde{x}^{\tilde{p}_k} - \tilde{x}^{\tilde{p}_{k-1}})$
and so must have, up to a unit, the form
$f = \prod_{i=1}^m(\tilde{x}^{\tilde{q}_i} - \rho_i\tilde{x}^{\tilde{p}_i})$
for some roots of unity $\rho_i$ where each pair
where $\tilde{p}_i, \tilde{q}_i$ forms a primitive pair of $d$-tuples.
For convenience we fix a term order on $[x_1,\ldots,x_d]$
and assume the binomial factors are such that
$\tilde{x}^{\tilde{q}_i} > \tilde{x}^{\tilde{p}_i}$ for each $i$.
In this case $f$ is an associate of $\prod_{i=1}^m(\tilde{\xi}_i - \rho_i)$
where two irreducibles $\tilde{\xi}_i - \rho_i$ and $\tilde{\xi}_j - \rho_j$
are by Lemma~\ref{lmm:ass} associated if and only if
$\tilde{\xi}_i = \tilde{\xi}_j$ and $\rho_i = \rho_j$. Call the rational
group element
$\tilde{\xi} = \tilde{x}^{\tilde{q}}/\tilde{x}^{\tilde{p}} \in R^{\pm}$
{\em positive} w.r.t.~our term order of $[x_1,\ldots,x_d]$
if $\tilde{x}^{\tilde{q}} > \tilde{x}^{\tilde{p}}$. We can therefore
assume $f$ to have the form $f = \prod_{i=1}^m(\tilde{\xi}_i - \rho_i)$
where $\tilde{\xi}_i$ is a positive fraction of a primitive pair
of monomials for each  $i$.

Since $R = \comps[x_1,\ldots,x_d]$ and $R^{\pm} = \comps[x_1,\ldots,x_d]^{\pm}$
are UFD for every $d\in{\nats}$, Definition~\ref{def:Minks-poly} makes sense
for each $d\in{\nats}$ as well.

Since both $R = \comps[x_1,\ldots,x_d]$ or and
$R^{\pm} = \comps[x_1,\ldots,x_d]^{\pm}$ are UFDs, then
Definition~\ref{def:Minks-poly} for the single variable case can be
generalized as the following.
\begin{definition}
\label{def:Minks-poly-d}
Let $d\in{\nats}$.
A polynomial $f = f(\tilde{x})$ in $R = \comps[x_1,\ldots,x_d]$ or
$R^{\pm} = \comps[x_1,\ldots,x_d]^{\pm}$ is {\em powered}
if $f(\tilde{x})$ divides $f^{(i)}(\tilde{x})$ for each $i\in\nats$.
A powered polynomial will be called a {\em power-polynomial}. 
Equivalently, $f$ is a power-polynomial if and only if the
principal ideal $(f)$ in either $R$ or $R^{\pm}$ is a power-closed ideal.  
\end{definition}
For a principal power-closed ideal $I = (f) \subseteq R^{\pm}$, where
the polynomial $f$ is a power-polynomial, we can, by the
above discussion, assume $f$ to have the form
\begin{equation}
\label{eqn:MP-form}
f = \prod_{i=1}^k\left(\prod_{j=1}^{\ell_i}(\tilde{\xi}_i - \rho_{i\/j})\right),
\end{equation}
where the $\tilde{\xi}_i$ are distinct positive fractions of primitive pairs
of monomials and each $\rho_{i\/j}$ is a root of unity
in ${\comps}^*$. By our assumption on positivity of the $\tilde{\xi}_i$s, none
of the elements $\tilde{\xi}_i - \rho_{i\/j}$ are associates. Since
$R^{\pm}$ is a UFD the following claim is evident about the irreducible
factors of $f$ in (\ref{eqn:MP-form}).
\begin{claim}
\label{clm:UFD-ij}
Let $n\in\nats$. The irreducible $\tilde{\xi}_i - \rho_i$,
where $\tilde{\xi}_i$ is positive fraction of primitive pairs of monomials,
divides $\tilde{\xi}_j^n - \rho_j$
if and only if $\tilde{\xi}_i = \tilde{\xi}_j$ and $\rho_i^n = \rho_j$.
\end{claim}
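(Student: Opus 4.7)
My plan is to handle the ``if'' direction first, which is immediate, and then reduce the ``only if'' direction to a one-variable divisibility problem via a unimodular change of variables on the Laurent monoid.

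For the easy direction, assuming $\tilde\xi_i = \tilde\xi_j$ and $\rho_i^n = \rho_j$, the polynomial $\tilde\xi_j^n - \rho_j = \tilde\xi_i^n - \rho_i^n$ factors in $R^\pm$ as $(\tilde\xi_i - \rho_i)(\tilde\xi_i^{n-1} + \rho_i\tilde\xi_i^{n-2} + \cdots + \rho_i^{n-1})$, so divisibility is clear.

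For the converse, set $\tilde a = \tilde q_i - \tilde p_i$ and $\tilde b = \tilde q_j - \tilde p_j$ in $\ints^d$. Since $(\tilde p_i, \tilde q_i)$ and $(\tilde p_j, \tilde q_j)$ are primitive pairs, both $\tilde a$ and $\tilde b$ are primitive vectors in $\ints^d$ (i.e., the gcd of their entries is $1$), and they are nonzero. Up to units in $R^\pm$, the elements $\tilde\xi_i - \rho_i$ and $\tilde\xi_j^n - \rho_j$ agree with the Laurent binomials $\tilde x^{\tilde a} - \rho_i$ and $\tilde x^{n\tilde b} - \rho_j$, so the divisibility hypothesis becomes
\[
(\tilde x^{\tilde a} - \rho_i) \,\bigm|\, (\tilde x^{n\tilde b} - \rho_j) \quad \text{in } R^\pm.
\]
Since $\tilde a$ is primitive, there exists $A \in GL_d(\ints)$ with $A\tilde a = \tilde e_1$, and the monomial substitution corresponding to $A^{-1}$ is an automorphism of $R^\pm$. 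After this substitution the question becomes: under what conditions does $y_1 - \rho_i$ divide $\tilde y^{n\tilde b'} - \rho_j$ in $\comps[y_1^{\pm},\ldots,y_d^{\pm}]$, where $\tilde b' = A\tilde b$ is again primitive?

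Now comes the key one-variable step: divisibility by $y_1 - \rho_i$ amounts to the vanishing of $\tilde y^{n\tilde b'} - \rho_j$ on the hyperplane $y_1 = \rho_i$, i.e.\ the polynomial identity $\rho_i^{nb_1'}y_2^{nb_2'}\cdots y_d^{nb_d'} = \rho_j$ in $\comps[y_2^{\pm},\ldots,y_d^{\pm}]$. This forces $nb_2' = \cdots = nb_d' = 0$, hence $b_2' = \cdots = b_d' = 0$ (as $n \geq 1$), and $\rho_i^{nb_1'} = \rho_j$. Therefore $\tilde b' = b_1'\tilde e_1$, and applying $A^{-1}$ gives $\tilde b = b_1'\tilde a$. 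Primitivity of $\tilde b$ then forces $b_1' = \pm 1$, so $\tilde b = \pm\tilde a$.

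Finally I rule out $\tilde b = -\tilde a$ using the positivity convention: if $\tilde b = -\tilde a$, then together with the disjoint-support condition on each primitive pair one gets $\tilde q_j = \tilde p_i$ and $\tilde p_j = \tilde q_i$, making $\tilde x^{\tilde q_j} = \tilde x^{\tilde p_i} < \tilde x^{\tilde q_i} = \tilde x^{\tilde p_j}$ in the fixed term order, contradicting positivity of $\tilde\xi_j$. Hence $\tilde b = \tilde a$, which (again by disjoint supports) gives $\tilde p_i = \tilde p_j$ and $\tilde q_i = \tilde q_j$, so $\tilde\xi_i = \tilde\xi_j$; and then $b_1' = 1$ reduces the relation $\rho_i^{nb_1'} = \rho_j$ to $\rho_i^n = \rho_j$. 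The main obstacle is really the book-keeping around the unimodular change of variables and confirming that the ``negative'' case is excluded by the term-order convention, rather than any deep computation.
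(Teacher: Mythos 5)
Your proof is correct, but it follows a genuinely different route from the one the paper has in mind. The paper treats the claim as ``evident,'' and the intended justification is short: since $R^{\pm}$ is a UFD and $\tilde\xi_j^n - \rho_j = \prod_{\zeta^n = \rho_j}(\tilde\xi_j - \zeta)$, the irreducible $\tilde\xi_i - \rho_i$ divides $\tilde\xi_j^n - \rho_j$ iff it is an associate of one of these linear factors, and Lemma~\ref{lmm:ass} says exactly when $\tilde\xi_i - \rho_i$ and $\tilde\xi_j - \zeta$ are associates (either equal, or related by $\tilde\xi \mapsto \tilde\xi^{-1}$, $c\mapsto c^{-1}$); the positivity convention on the $\tilde\xi$'s kills the second case, giving $\tilde\xi_i = \tilde\xi_j$ and $\rho_i = \zeta$ with $\zeta^n = \rho_j$. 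Your argument instead pushes the divisibility question through a unimodular monomial automorphism of $R^{\pm}$ (taking the primitive exponent vector $\tilde a = \tilde q_i - \tilde p_i$ to $\tilde e_1$) so that the divisor becomes a genuine one-variable linear factor $y_1 - \rho_i$, and then you read off the constraints by setting $y_1 = \rho_i$. This is a clean, self-contained derivation that does not invoke Lemma~\ref{lmm:ass}; indeed your $GL_d(\ints)$ reduction essentially reproves the associate/irreducibility bookkeeping that the paper had already packaged in Lemma~\ref{lmm:irreducible-d}, Corollary~\ref{cor:irreducible-d}, and Lemma~\ref{lmm:ass} (and your reduction is arguably slicker than the paper's induction on $\nu(b)$). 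The trade-off is length: given the surrounding lemmas, the paper's route is a one-liner, while yours is a short independent proof. Both disposals of the $\tilde b = -\tilde a$ case via the term-order positivity and the disjoint-support condition are the same in spirit. One small presentational point: you should note explicitly that $\tilde\xi_j$ is also assumed positive (this is implicit in the paper since all the $\tilde\xi_i$ appearing in display (\ref{eqn:MP-form}) are positive fractions), as the exclusion of $\tilde b = -\tilde a$ relies on it.
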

For each $i\in\{1,\ldots,k\}$ let
$f_i = \prod_{j=1}^{\ell_i}(\tilde{\xi}_i - \rho_{\/j})$ from (\ref{eqn:MP-form}),
so $f = f_1\cdots f_k$. By Claim~\ref{clm:UFD-ij} we have that
$f$ is a power-polynomial if and only if each $f_i$ is a
power-polynomial. Consider a fixed
element $f_i$. Since $\tilde{\xi}_i$ is a fraction of a primitive
pair of monomials there is an evaluation $x_i := t^{a_i}$ for each $i$
where $t \in \comps[t]^{\pm}$ is indeterminate, such that
$\tilde{\xi}_i = \tilde{x}^{\tilde{q}_i}/\tilde{x}^{\tilde{p}_i}$
evaluates to $t$. In this case $f_i = f_i(\tilde{\xi}_i)$ evaluates
to the polynomial $f(t)\in\comps[t]$, which must then be a
power-polynomial as well.
We therefore have the following summarizing theorem on principal
power-closed ideals of $R^{\pm}$ and $R$. 
\begin{theorem}
\label{thm:MP-d}
A principal ideal $(f)\subseteq R^{\pm} = \comps[x_1,\ldots,x_d]^{\pm}$
is a power-closed ideal if and only if
\[
f = \prod_{i=1}^k f_i(\tilde{\xi}_i),
\]
where each $f_i\in\comps[x]$ is a power-polynomial and therefore has the form
described in Proposition~\ref{prp:1st-char} and
Theorem~\ref{thm:2nd-char}, and where the $\tilde{\xi}_i$ are distinct
positive fractions of primitive pairs of monomials, for
some term order on $[x_1,\ldots,x_d]$.
\end{theorem}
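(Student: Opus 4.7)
The plan is to combine the structural results already established to reduce the multivariable statement to the one‑variable classification in Proposition~\ref{prp:1st-char} and Theorem~\ref{thm:2nd-char}, via suitable monomial substitutions between $R^{\pm}$ and $\comps[t]^{\pm}$. The key bridges are: the shape of irreducible factors of $f$ forced by Proposition~\ref{prp:principal} and Corollary~\ref{cor:irreducible-d}; the uniqueness of the associate class $(\tilde{\xi},\rho)$ from Lemma~\ref{lmm:ass}; and the noninteraction across distinct bases $\tilde{\xi}_i$ coming from Claim~\ref{clm:UFD-ij}.

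For the forward direction, assume $(f)$ is power‑closed. Proposition~\ref{prp:principal} shows $f$ divides a product of binomial differences of powers, and (\ref{eqn:binom-factorization}) together with Corollary~\ref{cor:irreducible-d} lets me write each irreducible factor of $f$, up to a unit, as $\tilde{\xi}-\rho$ with $\tilde{\xi}$ a positive fraction of a primitive pair of monomials and $\rho$ a root of unity. By Lemma~\ref{lmm:ass} these data are associate‑invariants, so I may group factors of $f$ by the common base $\tilde{\xi}_i$, obtaining $f=\prod_{i=1}^{k} f_i(\tilde{\xi}_i)$ with $f_i(t)=\prod_{j}(t-\rho_{i,j})\in\comps[t]$. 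The main obstacle is showing that power‑closedness of the multivariable $f$ decouples across the different $\tilde{\xi}_i$. For this I use Claim~\ref{clm:UFD-ij}: since distinct positive primitive fractions remain distinct after raising to any power, the irreducible factors of $f_i(\tilde{\xi}_i^n)$ and $f_j(\tilde{\xi}_j^n)$ are pairwise nonassociate for $i\ne j$, and therefore $f\mid f^{(n)}$ in the UFD $R^{\pm}$ forces $f_i(\tilde{\xi}_i)\mid f_i(\tilde{\xi}_i^n)$ for each $i$ and $n$. To pass from this to the univariate statement, fix $i$ and invoke the primitive condition on the pair defining $\tilde{\xi}_i$: by B\'{e}zout there exist integers $a_1,\dots,a_d$ with $\sum_j (q_{i,j}-p_{i,j})a_j=1$, so the ring homomorphism $\phi:R^{\pm}\to\comps[t]^{\pm}$, $x_j\mapsto t^{a_j}$ sends $\tilde{\xi}_i$ to $t$. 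Applying $\phi$ to $f_i(\tilde{\xi}_i)\mid f_i(\tilde{\xi}_i^n)$ yields $f_i(t)\mid f_i(t^n)$ in $\comps[t]^{\pm}$, so $f_i$ is powered and Proposition~\ref{prp:1st-char}/Theorem~\ref{thm:2nd-char} apply.

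For the converse, assume $f=\prod_i f_i(\tilde{\xi}_i)$ with each $f_i$ a univariate power‑polynomial and the $\tilde{\xi}_i$ distinct positive primitive fractions. Each $\tilde{\xi}_i\in R^{\pm}$ is a unit, so the substitution $t\mapsto\tilde{\xi}_i$ extends to a ring homomorphism $\comps[t]^{\pm}\to R^{\pm}$ that preserves divisibility; applied to $f_i(t)\mid f_i(t^n)$ it gives $f_i(\tilde{\xi}_i)\mid f_i(\tilde{\xi}_i^n)$ in $R^{\pm}$ for every $n\in\nats$. Multiplying these divisibilities over $i=1,\dots,k$ and again appealing to Claim~\ref{clm:UFD-ij} to see that the factors $f_i(\tilde{\xi}_i)$ are pairwise coprime in the UFD $R^{\pm}$, I conclude $f\mid\prod_i f_i(\tilde{\xi}_i^n)=f^{(n)}$, so $(f)$ is power‑closed. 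The hardest point throughout is really the unique factorization bookkeeping encoded in Claim~\ref{clm:UFD-ij}; once that is in place, the passage between $R^{\pm}$ and $\comps[t]^{\pm}$ via the two substitutions is routine.
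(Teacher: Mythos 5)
Your proof is correct and follows essentially the same route as the paper: factor $f$ into irreducibles of the form $\tilde{\xi}-\rho$ using Proposition~\ref{prp:principal}, Corollary~\ref{cor:irreducible-d} and Lemma~\ref{lmm:ass}; group by the common base $\tilde{\xi}_i$; use Claim~\ref{clm:UFD-ij} to decouple the divisibility condition $f\mid f^{(n)}$ across the distinct bases; and reduce each factor $f_i$ to the one-variable case via the monomial substitution $x_j\mapsto t^{a_j}$ determined by a B\'ezout relation for the primitive pair. You spell out the B\'ezout step and the UFD valuation bookkeeping more explicitly than the paper does, which is a welcome clarification rather than a deviation; the appeal to pairwise coprimeness in the converse is superfluous (multiplying divisibilities in a commutative ring already suffices) but harmless.
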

Since $R = \comps[x_1,\ldots,x_d]$ is a UFD, a principal ideal
$(f)\subseteq R = \comps[x_1,\ldots,x_d]$, where $f$ is not divisible
by any irreducible monomial $x_i$, is power-closed iff
$(\tilde{x}^{\tilde{p}}f)$ is power-closed for any monomial
${\tilde{x}}^{\tilde{p}}\in R$. Passing to $R^{\pm}$ the ideal
$(f)\subseteq R \subseteq R^{\pm} = \comps[x_1,\ldots,x_d]^{\pm}$ is
power-closed iff $f$ has the form given in the above Theorem~\ref{thm:MP-d}
up to a unit in $R^{\pm}$. Since the units in $R^{\pm}$ are exactly
scalar multiples of monomials from $S = [x_1,\ldots,x_d]$ we have
the following corollary.
\begin{corollary}
\label{cor:MP-d}
A principal ideal $(f)\subseteq R = \comps[x_1,\ldots,x_d]$
is a power-closed ideal if and only if
\[
f = \tilde{x}^{\tilde{p}}\prod_{i=1}^k f_i(\tilde{\xi}_i),
\]
where each $f_i\in\comps[x]$ is a power-polynomial and therefore has the form
described in Proposition~\ref{prp:1st-char} and
Theorem~\ref{thm:2nd-char}, and where the $\tilde{\xi}_i$ are distinct
positive fractions of primitive pairs of monomials, for
some term order on $[x_1,\ldots,x_d]$.
\end{corollary}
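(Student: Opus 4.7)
The plan is to deduce the corollary from Theorem~\ref{thm:MP-d} by first stripping off the monomial factor of $f$ and then showing that, after stripping, power-closedness in $R$ and in $R^{\pm}$ coincide.

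First, since $R$ is a UFD, I would write $f = \tilde{x}^{\tilde{p}} g$ with $g \in R$ not divisible by any variable $x_j$. Then $f^{(i)} = \tilde{x}^{i\tilde{p}} g^{(i)}$, and because $g^{(i)}$ is likewise coprime to every $x_j$, the divisibility $f \mid f^{(i)}$ in $R$ is equivalent to $g \mid g^{(i)}$ in $R$. Hence $(f)$ is power-closed in $R$ if and only if $(g)$ is power-closed in $R$.

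Second, I would verify that for such a $g$, the ideal $(g) \subseteq R$ is power-closed in $R$ if and only if the extended ideal $g R^{\pm}$ is power-closed in $R^{\pm}$. The forward implication is immediate. For the reverse, suppose $g^{(i)} = g h$ for some $h \in R^{\pm}$; clearing Laurent denominators gives $\tilde{x}^{\tilde{r}} g^{(i)} = g h'$ with $h' \in R$ and $\tilde{r} \in \nats^{d}$. Since $g$ is coprime to $\tilde{x}^{\tilde{r}}$ in the UFD $R$, we conclude $g \mid g^{(i)}$ in $R$.

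Third, I would apply Theorem~\ref{thm:MP-d} to the power-closed ideal $g R^{\pm}$: we obtain that $g$ equals $\prod_{i=1}^{k} f_i(\tilde{\xi}_i)$ up to a unit of $R^{\pm}$. Every unit of $R^{\pm}$ has the form $c \tilde{x}^{\tilde{s}}$ with $c \in \comps^{*}$ and $\tilde{s} \in \ints^{d}$, so combining such a unit with the prefactor $\tilde{x}^{\tilde{p}}$ and absorbing $c$ into any one of the $f_i$ (which remains a power-polynomial) produces $f = \tilde{x}^{\tilde{p}'} \prod_{i=1}^{k} f_i(\tilde{\xi}_i)$ with $\tilde{p}' \in \nats^{d}$ chosen large enough to clear all Laurent denominators in the product. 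The converse direction is obtained by reading the same chain backwards and invoking the easy direction of Theorem~\ref{thm:MP-d}. The main obstacle, I expect, is the second step: confirming that passing from $R$ to $R^{\pm}$ introduces no new divisibility relation between $g$ and $g^{(i)}$. The decisive point is that $R^{\pm}$ is the localization of $R$ at the monoid of monomials; since $g$ has been chosen coprime to every variable, any Laurent-polynomial witness to $g \mid g^{(i)}$ in $R^{\pm}$ must already live in $R$.
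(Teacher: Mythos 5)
Your proposal is correct and follows essentially the same route as the paper: factor out the monomial content, note that power-closedness of a principal ideal in $R$ coincides with power-closedness of its extension to $R^{\pm}$ once the generator is coprime to the variables (the paper simply says ``passing to $R^{\pm}$,'' whereas you spell out the clearing-of-denominators argument), invoke Theorem~\ref{thm:MP-d}, and then account for the units of $R^{\pm}$ being scalar multiples of monomials. Your more explicit treatment of the localization step is a small improvement in transparency over the paper's one-line remark, but it is not a different method.
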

{\sc Remark:}
For ideals of $R$ and $R^{\pm}$ when $d\geq 2$ that are not principal, there
are plenty of power-closed ideals that are not associated with roots of
unity as we saw in Examples~\ref{exa:yax} and~\ref{exa:zaxby}.

\section{The radical}
\label{sec:rel-rad}

We have seen that the power-closure is a closure operator.
We turn now to consider a familiar operator, the
radical, $\sqrt{I}$, defined here in the usual way for ideals $I$.
It can be viewed as a closure operator on the lattice of ideals since, if
$I$ is an ideal then $\sqrt{I}$ is also an ideal, and the operator is
enlarging, monotone, and idempotent.
See particularly~\cite[p.~9]{Atiyah-Macdonald}.  

As usual, given a subset (usually, an ideal)
$I\subseteq {\comps}[x_1,\ldots,x_d]$, the zero locus, or the affine variety,
of $I$ is $Z(I) = \{\tilde{w} \in ({\comps}^*)^d : f(\tilde{w}) = 0$ for
each $f\in I\}$; and the {\em radical}, or {\em nilradical}, $\sqrt{I}$,
of an ideal $I$ is
$\{f\in {\comps}[x_1,\ldots,x_d] : f^n\in I\mbox{ for some $n\in{\nats}$ }\}$.
If $I = \sqrt{I}$ then the ideal $I$ is said to be {\em radical}.
Given a subset
$Z\subseteq ({\comps}^*)^d$, $I(Z)$ denotes the set of polynomials
$f\in {\comps}[x_1,\ldots,x_d]$ such that $f(\tilde w) = 0$ for each
$\tilde{w}\in Z$.

By Hilbert's Nullstellensatz~\cite[p.~85]{Atiyah-Macdonald},
${\comps}$ being algebraically closed, we have
$\sqrt{I^{(*)}} = I(Z(I^{(*)}))$.

The functions $I(\cdot)$ and $Z(\cdot)$ form a Galois connection
between the set of ideals in $\comps[x_1, \ldots, x_d]$ and the
collection of subsets of $\comps^d$,
each partially ordered by inclusion.  Using Hilbert's theorem it is seen
that this Galois connection yields a bijective order-reversing correspondence
between the set of radical ideals and the set of zero loci
of ideals $I$ of $\comps[x_1,\ldots, x_d]$.  We will establish a
similar correspondence between the set of radical power-closed ideals
of $\comps[x_1,\ldots,x_d]^\pm$ and certain subsets of the $d$-dimensional
torus group $T^d\subseteq (\comps^*)^d$.

Let $T$ denote the {\em circle group}, consisting of complex numbers
of unit modulus, under multiplication;  then $T^d \subseteq \comps^d$
is the {\em $d$-dimensional torus group}.
(Here we depart from the more common terminology in algebraic geometry, 
wherein the torus is considered to be complex algebraic manifold
$(\comps^*)^d$.)

If $f(\tilde x)$ is a binomial then the set
of solutions $\tilde x\in(\comps^*)^d$ to the equation $f(\tilde x) = 0$ is
a topologically closed subgroup of the group $(\comps^*)^d$
(under componentwise multiplication). The intersection of this subgroup
with $T^d$, that is, the set of solutions in $T^d$ to $f(\tilde x) = 0$, is
a topological group that is up to isomorphism the product of a
finite cyclic group with the $(d-1)$-dimensional  torus group. 
Two binomials yield the same subgroup in this way if and only if
they differ by a multiplicative factor of $\pm1$ times a monomial.
Any proper subgroup of $T^d$ can be obtained
as the intersection of such subgroups.

For $\tilde w = (w_1, \ldots, w_d)$, let $\tilde w^{(j)}$ denote
$(w_1^j, \ldots, w_d^j)$. Given an ideal $I$, let $Z_T(I)$ be the set of
$\tilde w\in T^d$ for which $f(\tilde w^{(j)})=0$ for each $f\in I$ and
$j\in \nats$. When $I$ is a power-closed ideal, $Z_T(I) = Z(I)\cap T^d$.

For $S\subseteq \comps^d$, let
$I_T(S) = \{f\in \comps[x_1,\ldots,x_d] : f(\tilde w^{(j)}) = 0$
for each $\tilde w \in S$ and $j\in \nats\}$.
Also write $I_T(\tilde w) = I_T(\{\tilde w\})$.

For $f\in\comps[x_1,\ldots,x_d]$, the {\em monomial support} of $f$ is
the set of monomials in $[x_1,\ldots,x_d]$ appearing with nonzero
coefficient in its representation as a linear combination of
distinct monomials.
\begin{lemma}
\label{lmm:mon-binom}
Suppose $\tilde w \in \comps^d$ and let $I$
be the ideal $I_T(\tilde w)$. If $f$ is a nonzero polynomial in
$I = I_T(\tilde w)$ having minimal monomial support then $f$ is a
constant multiple of a monomial or a binomial.
\end{lemma}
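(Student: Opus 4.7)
The plan is to exploit the minimality of $\supp(f)$ by exhibiting a nonzero monomial or binomial in $I_T(\tilde w)$ whose monomial support is contained in that of $f$: minimality will then force $|\supp(f)|\le 2$, so that $f$ itself is a constant multiple of a monomial or a binomial. Writing $f=\sum_{i=1}^{k}c_{i}\tilde{x}^{\tilde{p}_{i}}$ with distinct exponent vectors $\tilde{p}_{i}$ and nonzero coefficients $c_{i}$, and setting $\alpha_{i}:=\tilde{w}^{\tilde{p}_{i}}$, the defining condition $f(\tilde{w}^{(j)})=0$ for all $j\in\nats$ becomes the scalar identity
\[
\sum_{i=1}^{k} c_{i}\,\alpha_{i}^{j}=0\qquad\text{for every }j\in\nats.
\]

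I would split into two cases according to whether some $\alpha_{i}$ vanishes. If $\alpha_{i}=0$ for some $i$, then the single monomial $\tilde{x}^{\tilde{p}_{i}}$ evaluates to $\alpha_{i}^{j}=0$ at every $\tilde{w}^{(j)}$, so it lies in $I_T(\tilde w)$. Its support is a one-element subset of $\supp(f)$; by the minimality hypothesis $f$ must then be a constant multiple of $\tilde{x}^{\tilde{p}_{i}}$, and we are done.

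Otherwise every $\alpha_{i}\ne 0$. I would argue that two of the values $\alpha_{i}$ must coincide, for otherwise $\alpha_{1},\ldots,\alpha_{k}$ are pairwise distinct nonzero complex numbers and the subsystem consisting of the equations for $j=1,\ldots,k$ has coefficient matrix $[\alpha_{i}^{j}]_{j,i}$, which factors as a Vandermonde matrix in the $\alpha_{i}$ times $\mathrm{diag}(\alpha_{1},\ldots,\alpha_{k})$ and is therefore invertible, forcing all $c_{i}=0$ in contradiction to $f\ne 0$. Hence $\alpha_{i}=\alpha_{i'}$ for some $i\ne i'$, and then
\[
\tilde{x}^{\tilde{p}_{i}}-\tilde{x}^{\tilde{p}_{i'}}
\]
is a nonzero binomial (its two monomials differ because $\tilde{p}_{i}\ne\tilde{p}_{i'}$) that lies in $I_T(\tilde w)$ and whose support is a two-element subset of $\supp(f)$. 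Minimality now forces $|\supp(f)|\le 2$, so $f$ is a constant multiple of a monomial or a binomial.

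The argument is essentially bookkeeping once the Vandermonde-type invertibility is in hand; that invertibility is the main technical ingredient. The one potentially subtle point, namely that some coordinates of $\tilde w$ might vanish, is absorbed cleanly into the first case, where a vanishing $\alpha_{i}$ immediately produces a monomial in $I_T(\tilde w)$ of support size one.
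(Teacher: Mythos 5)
Your argument takes the same route as the paper's proof: both convert membership in $I_T(\tilde w)$ into the infinite linear system $\sum_i c_i\alpha_i^j = 0$ with $\alpha_i = \tilde w^{\tilde p_i}$, use the Vandermonde-times-diagonal factorization to force either some $\alpha_i = 0$ or two $\alpha_i$ to coincide, and then exhibit a monomial or binomial in $I_T(\tilde w)$ supported inside $\supp(f)$. Your explicit two-case split is just a rephrasing of the paper's observation that $\det(V^tD)=\det V\cdot\det D=0$.

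One small loose end, which the paper's own phrasing ``$f=cg$'' also glosses over: knowing $|\supp(f)|\le 2$ does not by itself make $f$ a constant multiple of a \emph{binomial} (a difference of monomials), since a priori $f = c_1m_1 + c_2m_2$ with $c_1+c_2\neq 0$. This is immediate to repair in your Case 2: there $\alpha_1=\alpha_2\neq 0$, so $0 = c_1\alpha_1^j + c_2\alpha_2^j = (c_1+c_2)\alpha_1^j$ forces $c_1+c_2=0$; equivalently, if $c_1+c_2\neq 0$ then $f - c_1(m_1-m_2) = (c_1+c_2)m_2$ would be a nonzero element of $I_T(\tilde w)$ with strictly smaller support, contradicting minimality.
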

\begin{proof}
Let $f$ be a nonzero polynomial in $I$ having minimal nonempty support,
where, in terms of the linear basis of monomials,
$f(\tilde x) = \sum_{i=1}^k a_i\tilde x^{\tilde p_i}$.
Then $\sum_{i=1}^k a_i(\tilde w^{\tilde p_i})^j = f(\tilde w^{(j)}) = 0$
($j= 1, 2, \ldots$).
Let $V = V({\tilde{w}}^{\tilde{p}_1},\ldots,{\tilde{w}}^{\tilde{p}_k})$
denote the $k\times k$ Vandermonde matrix for which
$V_{i\/j} = (\tilde w^{\tilde p_i})^{j-1}$, and let
$D = D({\tilde{w}}^{\tilde{p}_1},\ldots,{\tilde{w}}^{\tilde{p}_k})$
denote the $k\times k$ diagonal matrix having
$D_{i\/i} = \tilde w^{\tilde p_i}$. As is immediate from the well-known
expression for the determinant of a Vandermonde matrix,
the determinant of $V$ is
$\prod_{i<j}(\tilde w^{\tilde p_j}-\tilde w^{\tilde p_i})$;
and of course the determinant of $D$ is the product of the
$k$ complex numbers on the diagonal.  The $j$-th entry of the vector
$V^tD\tilde a^t$ is $f(\tilde w^{(j)}) = 0$; that is,
$V^tD\tilde a^t = \tilde{0}$.
From this it follows that the determinant of $V^tD$
is zero, and that either one of the $k$ monomials $\tilde x^{\tilde p_i}$
yields $0$ when evaluated at $\tilde w$, or some two of the monomials
yield the same value at $\tilde w$.  In either case,
there is a polynomial $g(\tilde x)$, either the difference of
two monomials or itself a monomial, which yields 0 when evaluated at
$\tilde w$; but clearly then, for such a polynomial,
$g(\tilde w^{(j)}) = 0$ for $j\in \nats$.
Then $g\in I$ and its support is contained in that of $f$.
By the minimality assumption, $f = cg$, for some constant $c$.\
\end{proof}
From Lemma~\ref{lmm:mon-binom} it follows easily that each element
of $I_T(\tilde w)$
is a linear combination with complex coefficients of elements of
$I_T(\tilde w)$ which are  monomials and binomials, so that
$I_T(\tilde w)$ is generated as an ideal by finitely many monomials
and binomials.
\begin{lemma}
\label{lmm:p-q}  
If $f = \tilde x^{\tilde p}-\tilde x^{\tilde q}$ is a binomial,
$\tilde w \in (\comps^*)^d$, and
$Z = \{\tilde w \in T^d : f(\tilde w) = 0\}$, then $(f) = I(Z) = I_T(Z)$.
\end{lemma}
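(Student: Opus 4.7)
The plan is to establish each of the three equalities in the chain $(f)=I(Z)=I_T(Z)$, saving the difficult one for last. The inclusion $(f)\subseteq I(Z)$ is immediate from the definition of $Z$, and $I_T(Z)\subseteq I(Z)$ is trivial (take $j=1$). For the reverse $I(Z)\subseteq I_T(Z)$ I would observe that $Z$ is closed under the maps $\tilde w\mapsto \tilde w^{(j)}$: if $\tilde w\in Z$ then $\tilde w^{\tilde p}=\tilde w^{\tilde q}$, whence $(\tilde w^{(j)})^{\tilde p}=(\tilde w^{\tilde p})^j=(\tilde w^{\tilde q})^j=(\tilde w^{(j)})^{\tilde q}$, so $\tilde w^{(j)}\in Z$.

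The substantive content is the inclusion $I(Z)\subseteq (f)$. I would first reduce to the irreducible case by invoking factorization (\ref{eqn:binom-factorization}): write
\[
f \;=\; \pm\,\tilde x^{\tilde c}\prod_{i=0}^{h-1}\bigl(\tilde x^{\tilde q''}-\rho^i\tilde x^{\tilde p''}\bigr),
\]
with $(\tilde p'',\tilde q'')$ a primitive pair and $\rho$ a primitive $h$-th root of unity. By Lemma~\ref{lmm:irreducible-d} and Corollary~\ref{cor:irreducible-d} each $f_i:=\tilde x^{\tilde q''}-\rho^i\tilde x^{\tilde p''}$ is irreducible in $R$, and by Lemma~\ref{lmm:ass} the $f_i$ are pairwise non-associate. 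Because $T^d\subseteq(\comps^*)^d$, the monomial prefactor $\tilde x^{\tilde c}$ vanishes nowhere on $T^d$, so $Z=\bigcup_{i=0}^{h-1}Z_i$ with $Z_i:=V(f_i)\cap T^d$, and therefore $I(Z)=\bigcap_i I(Z_i)$. It suffices to prove $I(Z_i)=(f_i)$ for each $i$.

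For the irreducible case, set $\tilde r:=\tilde q''-\tilde p''$. Then $Z_i$ is a coset in $T^d$ of the compact subgroup $K_T:=\{\tilde w\in T^d:\tilde w^{\tilde r}=1\}$, while $V(f_i)\cap(\comps^*)^d$ is the corresponding coset of the algebraic subtorus $K_{\comps}:=\{\tilde x\in(\comps^*)^d:\tilde x^{\tilde r}=1\}$. Primitivity of $(\tilde p'',\tilde q'')$ forces the components of $\tilde r$ to have gcd $1$, so $K_{\comps}\cong(\comps^*)^{d-1}$ and $K_T\cong T^{d-1}$. The key step is the classical density fact that $T^{d-1}$ is Zariski dense in $(\comps^*)^{d-1}$ — any nonzero Laurent polynomial identically zero on the compact torus must vanish — from which $Z_i$ is Zariski dense in $V(f_i)\cap(\comps^*)^d$. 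Since the two monomials of $f_i$ have disjoint supports, $V(f_i)$ is not contained in any coordinate hyperplane $\{x_j=0\}$, so $V(f_i)\cap(\comps^*)^d$ is open dense in the irreducible variety $V(f_i)$; hence $Z_i$ is Zariski dense in $V(f_i)$. Hilbert's Nullstellensatz combined with the primality of $(f_i)$ now yields $I(Z_i)=(f_i)$.

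The proof closes by UFD bookkeeping in $R$: the pairwise coprime irreducibles $f_0,\ldots,f_{h-1}$ satisfy $\bigcap_i(f_i)=\bigl(\prod_i f_i\bigr)=(f)$ in the intended case where $\tilde p,\tilde q$ have disjoint supports (so $\tilde c=0$ and no spurious monomial factor spoils radicality). The principal obstacle is the Zariski density step; it hinges on the density of the real torus inside the algebraic torus together with the monomial-support check that ensures no irreducible component of $V(f_i)$ lies in a coordinate hyperplane.
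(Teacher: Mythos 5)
Your proof is correct, provided one reads the lemma in the Laurent ring $R^\pm$ (as the surrounding Theorems~\ref{thm:rad-power} and~\ref{thm:Galois-connection} make clear); in that ring the monomial prefactor $\tilde x^{\tilde c}$ in~(\ref{eqn:binom-factorization}) is a unit, so the hedge in your final sentence about ``the intended case where $\tilde p,\tilde q$ have disjoint supports'' is unnecessary — the UFD bookkeeping works unconditionally. Your route is genuinely different from the paper's. The paper argues entirely at the level of binomial ideals: it invokes Lemma~\ref{lmm:mon-binom} to see that $I_T(Z)=\bigcap_{\tilde w\in Z}I_T(\tilde w)$ is an intersection of ideals generated by binomials, and then uses (implicitly, via Pontryagin duality for the closed subgroup $Z\le T^d$) the fact that every binomial vanishing on $Z$ is, up to a unit, of the form $\tilde x^{h\tilde p}-\tilde x^{h\tilde q}$ and hence already lies in $(f)$. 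You instead factor $f$ into pairwise non-associate irreducible binomials $f_i$ via~(\ref{eqn:binom-factorization}) and Corollary~\ref{cor:irreducible-d}, identify each $Z_i$ as a nonempty coset of the compact $(d-1)$-torus $K_T$ inside the corresponding coset of the algebraic $(d-1)$-torus cut out by $f_i$, use the Zariski density of $T^{d-1}$ in $(\comps^*)^{d-1}$, and finish with the Nullstellensatz and primality of $(f_i)$, followed by the UFD identity $\bigcap_i(f_i)=(\prod_i f_i)$. Both are valid; your version trades the duality/annihilator computation for a standard density argument plus the Nullstellensatz, and it has the side benefit of not needing the (true but not entirely trivial) fact that an intersection of Laurent binomial ideals is again generated by binomials, which the paper's terse argument leans on.
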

\begin{proof}
As already noted, $Z$ is a subgroup of $T^d$. It is closed under the
power operators.  Therefore the ideal $I(Z) = I_T(Z)$ is the intersection
of ideals generated by binomials.  The binomials in $(f)$ are,
up to multiplication by a unit, of the form
$\tilde x^{h\tilde p_1}-\tilde x^{h\tilde p_2}$, all of which lie in $(f)$.
Therefore $I_T(Z) = (f)$.
\end{proof}
\begin{theorem}
\label{thm:rad-power}  
If $I$ is an ideal of $\comps[x_1,\ldots,x_d]^\pm$ that is radical
and power-closed, then $I$ is generated by products of binomials.
\end{theorem}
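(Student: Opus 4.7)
The plan is to prove the theorem in three stages.

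First, since $I$ is radical and $\comps$ is algebraically closed, Hilbert's Nullstellensatz (applied to the Laurent polynomial ring $R^\pm$, whose affine variety is the algebraic torus $(\comps^*)^d$) gives $I = I(Z(I))$, with $Z(I)\subseteq(\comps^*)^d$. By power-closedness of $I$ together with Observation~\ref{obs:ints}, $Z(I)$ is stable under every map $\tilde w\mapsto\tilde w^{(j)}$ for $j\in\ints$. Hence for each $\tilde w\in Z(I)$, the Zariski closure of the cyclic subgroup $\{\tilde w^{(j)}:j\in\ints\}\subseteq(\comps^*)^d$ lies in $Z(I)$; such closures are closed algebraic subgroups of the algebraic torus, and therefore finite unions of translated subtori. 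By Noetherianity of $R^\pm$, $Z(I)=V_1\cup\cdots\cup V_n$ where each irreducible component $V_i$ is itself a translated subtorus, and $I=\bigcap_{i=1}^n\mathfrak{p}_i$ with $\mathfrak{p}_i:=I(V_i)$.

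Second, by Lemma~\ref{lmm:mon-binom} applied in $R^\pm$ (where monomials are units, so any monomial element of $I_T(\tilde w)$ becomes trivial) together with the subsequent observation, each $\mathfrak{p}_i$ is generated by binomials of the form $\tilde x^{\tilde p}-\xi_i^{\tilde p}$ for $\tilde p$ in a basis of the sublattice $L_i\subseteq\ints^d$ annihilating $V_i$, where $\xi_i$ is any fixed point of $V_i$. In particular, Lemma~\ref{lmm:p-q} identifies each principal binomial ideal with the ideal of its subgroup zero set, providing a concrete dictionary for this description.

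Third, to conclude that $I=\bigcap_i\mathfrak{p}_i$ is generated by products of binomials, I would induct on $n$. The base case $n=1$ is immediate. For the inductive step, assuming $I'=\bigcap_{i\geq 2}\mathfrak{p}_i$ is generated by products of binomials, I need the same for $\mathfrak{p}_1\cap I'$. The key identity is
\[
\mathfrak{p}_1\cap I' \;=\; \mathfrak{p}_1 I' + \bigl(\text{binomials lying in }\mathfrak{p}_1\cap I'\bigr),
\]
in which the first summand is generated by $b\cdot Q$ (a binomial generator of $\mathfrak{p}_1$ times a product-of-binomials generator of $I'$) and the second by binomials (as one-factor products), so both are generated by products of binomials. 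The main obstacle is verifying this identity; I would do so by lifting syzygies among images of products of binomials across the toric quotient $R^\pm/\mathfrak{p}_1$, exploiting that $R^\pm/\mathfrak{p}_1$ is itself the coordinate ring of a translated subtorus and hence has transparent binomial structure. As a sanity check pointing in the same direction, for every $\tilde u\notin Z(I)$ one can choose, for each $i$, a binomial generator $b_i$ of $\mathfrak{p}_i$ with $b_i(\tilde u)\neq 0$; then $\prod_i b_i$ is a product of binomials in $\prod_i\mathfrak{p}_i\subseteq I$ that does not vanish at $\tilde u$, so the ideal $J$ generated by products of binomials in $I$ satisfies $Z(J)=Z(I)$ and hence $\sqrt J=I$; the induction supplies the sharper equality $J=I$.
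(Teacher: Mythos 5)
Your decomposition step has a genuine gap that unravels the rest of the argument. You pass to the irreducible components $V_1,\ldots,V_n$ of $Z(I)$, each a translated subtorus, and set $\mathfrak{p}_i = I(V_i)$. You then claim, citing Lemma~\ref{lmm:mon-binom}, that $\mathfrak{p}_i$ is generated by binomials of the form $\tilde x^{\tilde p}-\xi_i^{\tilde p}$. But those are not binomials in the sense of the theorem (differences of two monomials, i.e., up to units $\tilde x^{\tilde p}-1$), unless $\xi_i^{\tilde p}=1$, i.e., unless $V_i$ is an honest subgroup. A component of $Z(I)$ is a translated subtorus that need not contain the identity: already in $\comps[x,y]^\pm$, if $Z(I)=\{(1,1),(1,-1)\}$ then $V_2=\{(1,-1)\}$ has $\mathfrak{p}_2=(x-1,y+1)$, which is not a binomial ideal. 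Lemma~\ref{lmm:mon-binom} applies to $I_T(\tilde w)$, the ideal of the entire closed subgroup generated by $\tilde w$ (all power-images lumped together); its proof uses crucially that all of $\tilde w,\tilde w^{(2)},\tilde w^{(3)},\ldots$ are zeros. You cannot apply it component by component. This is exactly why the paper decomposes $I=\bigcap_{\tilde w\in W}I_T(\tilde w)$ into ideals of closed subgroups rather than into associated primes: each $I_T(\tilde w)$ is a genuine lattice/binomial ideal, whereas the $\mathfrak{p}_i$ generally are not.

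The second problem is that your induction rests on the unverified identity $\mathfrak{p}_1\cap I' = \mathfrak{p}_1 I' + (\text{binomials in }\mathfrak{p}_1\cap I')$, which you explicitly flag as ``the main obstacle'' and propose to settle by ``lifting syzygies'' without carrying that out. As written, this is a plan, not a proof, and it is not at all clear the identity holds as stated. Your sanity check only yields $\sqrt J = I$, which, as you yourself note, is strictly weaker than the assertion $J=I$; so nothing in the argument actually closes the loop. The paper's proof takes the intersection $\bigcap_{\tilde w\in W}I_T(\tilde w)$ of binomial ideals and argues directly (via a radicality argument) that it is generated by products of binomials, sidestepping any such delicate syzygy computation. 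To repair your approach, you would need to (a) replace the component decomposition by a decomposition into ideals of closed subgroups so that each piece really is a binomial ideal, and (b) actually establish the inductive identity, which is substantive and not a formal consequence of what precedes it.
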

\begin{proof}
Since $I$ is radical, $I=I(Z(I))$.
Since $I$ is power-closed, $Z(I)$ is closed under the positive power operators.
Then $I$ is the intersection of the ideals $I_T(\tilde w)$ where
$\tilde w\in Z(I)$, and we may choose a finite subset $W\subseteq Z(I)$
for which $I = \bigcap_{\tilde w\in W}I_T(\tilde w)$.

By Lemma~\ref{lmm:mon-binom}
each ideal $I_T(\tilde w)$ is generated by binomials.  Each of these
ideals is radical, so the intersection $I$ coincides with the product, and
it is generated by products of binomials.
\end{proof}

To conclude this episode, we have the following statement.
\begin{theorem}
\label{thm:Galois-connection}  
There is a bijective, order-reversing correspondence between the set of
subsets $Z\subseteq T^d$ that are closed both topologically and under
the mappings ${\tilde{w}}\rightarrow {\tilde{w}}^{(j)}$ for
$j\in \nats$ and the set of
radical power-closed ideals of $\comps[x_1,\ldots,x_d]^\pm$.
The sets $Z$ are the finite unions of topologically closed subgroups
of the torus group.
\end{theorem}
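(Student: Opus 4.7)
The plan is to establish the bijection via the standard vanishing/ideal Galois connection $\Phi(I)=Z(I)\cap T^d$ on radical power-closed ideals and $\Psi(Z)=I(Z)=\{f\in R^{\pm}:f(\tilde{w})=0\text{ for all }\tilde{w}\in Z\}$ on topologically closed, power-closed subsets of $T^d$, using Hilbert's Nullstellensatz combined with Theorem~\ref{thm:rad-power} to identify the image of $\Phi$ with finite unions of closed subgroups. Order-reversal is immediate from the definitions. For well-definedness: if $I$ is radical and power-closed, $\Phi(I)$ is topologically closed, and is closed under power maps because $f\in I$ implies $f^{(j)}\in I$, so $f(\tilde{w}^{(j)})=f^{(j)}(\tilde{w})=0$ for $\tilde{w}\in\Phi(I)$; dually $\Psi(Z)$ is automatically radical, and is power-closed when $Z$ is, since $f^{(j)}(\tilde{w})=f(\tilde{w}^{(j)})=0$ for every $\tilde{w}\in Z$.

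For $\Psi\circ\Phi=\text{id}$, I would apply Theorem~\ref{thm:rad-power} to write any radical power-closed $I$ as an ideal generated by products of binomials. Its minimal primary decomposition $I=\bigcap_k P_k$ then involves primes $P_k$ each containing (and, by the general theory of binomial ideals over $\comps$, themselves generated by) binomials. Extending Lemma~\ref{lmm:p-q} from the principal case to non-principal binomial primes such as $(x_1-1,\ldots,x_d-1)$, each $P_k$ corresponds to a topologically closed subgroup $Z_k=Z(P_k)\cap T^d$ with $P_k=I(Z_k)$. Therefore $\Phi(I)=\bigcup_k Z_k$ is a finite union of closed subgroups, and
\[
\Psi(\Phi(I))=I\bigl(\textstyle\bigcup_k Z_k\bigr)=\bigcap_k I(Z_k)=\bigcap_k P_k=I.
\]

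For $\Phi\circ\Psi=\text{id}$, I would argue via orbit-closures. Given $\tilde{w}\in Z$, the topological closure $H_{\tilde{w}}:=\overline{\{\tilde{w}^{(j)}:j\in\nats\}}$ is a closed subgroup of $T^d$: in the compact abelian group $T^d$ the closure of any submonoid is a subgroup, because compactness produces a subsequence $\tilde{w}^{(j_k)}$ converging to the identity, which recovers inverses. Since $Z$ is closed and power-closed, $H_{\tilde{w}}\subseteq Z$, so $Z$ is a union of closed subgroups. Applying the previous step to $\Psi(Z)$ gives $\Phi(\Psi(Z))=\bigcup_k Z_k$ as a finite union of closed subgroups sharing with $Z$ the same vanishing ideal, i.e., $I(Z)=I\bigl(\bigcup_k Z_k\bigr)$. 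The main obstacle is to conclude $Z=\bigcup_k Z_k$ rather than merely agreement of vanishing ideals; I would handle this by restricting to each component, noting that $Z\cap Z_k$ is a topologically closed, power-closed subset of the closed subgroup $Z_k$, and that minimality of the prime $P_k=I(Z_k)$ together with the orbit-closure argument applied inside $Z_k$ (using induction on $\dim Z_k$ if needed) forces $Z\cap Z_k=Z_k$. This simultaneously establishes the second assertion that such $Z$ are precisely the finite unions of topologically closed subgroups of $T^d$.
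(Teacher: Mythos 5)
Your overall strategy is the same as the paper's: set up the Galois connection $(I_T, Z_T)$, use Theorem~\ref{thm:rad-power} to handle the ideal side, and identify the closed sets with finite unions of closed subgroups of $T^d$. The ideal-side argument ($\Psi\circ\Phi=\mathrm{id}$) via primary decomposition into binomial primes and an extension of Lemma~\ref{lmm:p-q} is a slightly different route from the paper's (which works directly with the products of binomials and their vanishing sets) but is defensible.

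The genuine gap is in the set-side argument. Your orbit-closure observation is correct and nicely self-contained: in the compact group $T^d$, the closure of the forward orbit $\{\tilde{w}^{(j)}\}$ of a point is a closed subgroup, so a topologically closed power-closed $Z\subseteq T^d$ is a \emph{union} of closed subgroups. But the theorem asserts that $Z$ is a \emph{finite} union of closed subgroups, and this finiteness is exactly where the paper invokes an external result of~\cite{Lawrence-torus}. Your proposal does not actually prove it: you observe $Z\subseteq\Phi(\Psi(Z))=\bigcup_k Z_k$ and wish to conclude $Z=\bigcup_k Z_k$, but the points of $\bigcup_k Z_k\setminus Z$ need not lie on any orbit-closure originating inside $Z$, so the orbit-closure argument gives no traction on them; and minimality of the primes $P_k=I_T(Z_k)$ says something about the ideal side, not directly about whether $Z$ fills out each $Z_k$. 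The phrase ``using induction on $\dim Z_k$ if needed'' is a gesture, not an argument. To close this gap you would either need to cite the theorem the paper cites (a topologically closed subset of $T^d$ that is closed under the power maps is a finite union of closed subgroups), or give a genuine proof of it -- for instance by a careful Noetherian/compactness argument showing that among closed subgroups contained in $Z$ there are only finitely many maximal ones and that they cover $Z$. As written, that finiteness statement is assumed rather than established.
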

\begin{proof}
Clearly $I_T$, which maps subsets of $T^d$ to ideals, and $Z_T$, which
maps ideals to subsets, are order-reversing.  Also,
$\phi(S) = Z_T(I_T(S)) \supseteq S$ for each set $S$ and
$\tau(I) = I_T((Z_T(I))\supset I$, for each $I$.  Thus
$(I(\cdot), Z(\cdot))$ form a Galois connection; and $\phi$ and
$\tau$ are both closure operators. The lattices of $\phi$-closed sets
and of $\tau$-closed ideals are dually isomorphic.

We need further only to identify the closed sets.

Given an ideal $I$, the set $Z_T(I)$ is a topologically closed subset of
$T^d$ that is also closed under the mappings
$\tilde w\mapsto \tilde w^{(j)}$ for $j\in\nats$. By a
theorem of~\cite{Lawrence-torus},
such sets are finite unions of closed subgroups of $T^d$.
If $f$ is a binomial, then $Z_T(f)$ is a closed subgroup of $T^d$.
As a topological group, it is isomorphic to the product of a finite
cyclic group and  a $(d-1)$-dimensional torus group.  Any such group
can be obtained as the set $Z_T(f)$ for some binomial $f$; and any closed
subgroup of $T^d$ can be obtained as the finite intersection of such subgroups.
If $I= (f_1, \ldots, f_n)$, where the $f_i$'s are binomials, then
$Z_T(I) = \bigcap_{i=1}^m Z_T(f_i)$.  It follows that any closed subgroup of
$T^d$ is of the form $Z_T(I)$, for some ideal $I$.  If
$Z_1, \ldots, Z_m$ are closed subgroups of $T^d$ and $I_1, \ldots, I_m$
are ideals for which $Z_i = Z_T(I_i)$ ($1\le i \le m$),
then $Z_T(I_1\cdots I_m) = \bigcup_{i=1}^m Z_T(I_i) = \bigcup_{i=1}^m Z_i$.
It follows that the sets in the image of $Z_T$ are precisely the finite
unions of closed subgroups of $T^d$.

For any set $S\subseteq T^d$, $I_T(S)$ is a radical power-closed ideal.
Suppose, conversely, that the ideal $I$ of $\comps[x_1,\ldots,x_d]^\pm$
is radical and power-closed. From Theorem~\ref{thm:rad-power}  
it follows that $I$ is generated by products of binomials.
Given a binomial $f$, consider the subgroup $Z_f$ of $T^d$ of points
$\tilde w$ satisfying $f(\tilde w) = 0$.  Then $I_T(Z_f) = (f)$.

It follows that if $S = Z_T(I)$ then $I_T(S)$, being the intersection
of the radical ideals $Z_f$, equals $I$.
\end{proof}

We now look at the radical of an ideal as the intersection of all
the associated prime ideals and what can be said about the radical of a
power-closed ideal in general. Further, we look at whether or not
the two closure operators; power-closure and taking the radical, commute
in general. 

Using Proposition~\ref{prp:Z(f*)}
we can obtain the unique form of $\sqrt{I^{(*)}}$ as
an intersection of the prime ideals associated with $I^{(*)}$
for a principal ideal $I = (f)$. 

For a simple graph on a given set $V$ of vertices call 
a subtree {\em proper} if it contains at least one edge
(equivalently at least two vertices of $V$). Also, for
each subset $A\subseteq V$ we can form a tree $T(A)$
with vertex set $V$ connecting all the vertices of $A$.
For an antichain $\mathcal{A}\subseteq 2^{[d]} = \power(\{1,\ldots,d\})$
let $\mathcal{A}_1$ denote all the 1-element subsets of $\mathcal{A}$
and $\mathcal{A}_{2+}$ all the subsets with two or more elements,
and so we have a partition $\mathcal{A} = \mathcal{A}_1\cup\mathcal{A}_{2+}$.
We then have the following corollary from Proposition~\ref{prp:Z(f*)}.
\begin{corollary}
\label{cor:rad(f*)}
For $f = a_1x_1 + \cdots + a_dx_d\in R$ and $I = (f)$ 
we have $I^{(*)} = (f,f^{(2)},\ldots,f^{(d)})$ and there
is an antichain $\mathcal{A}\subseteq 2^{[d]}$ and proper trees
(actually proper paths)
$\{ T(A) : A\in\mathcal{A}_{2+}\}$ such that 
$\sum_{i\in A}a_i  = 0$ for each $A\in\mathcal{A}_{2+}$ and
\begin{equation}
\label{eqn:rad(f*)}  
\sqrt{I^{(*)}} =
\left(\bigcap_{\{i\}\in\mathcal{A}_1}(x_i)\right) \cap
\left(\bigcap_{A\in\mathcal{A}_{2+}} (x_i-x_j : \{i,j\}\in E(T(A))\right).
\end{equation}
Further, the ideals in the intersection on the right of (\ref{eqn:rad(f*)})
are all prime ideals; the unique primes associated with $I^{(*)}$.
\end{corollary}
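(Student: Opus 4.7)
The plan is to invoke Hilbert's Nullstellensatz, which in this setting yields $\sqrt{I^{(*)}} = I(Z(I^{(*)}))$, and then to read off the prime decomposition of $\sqrt{I^{(*)}}$ from the irreducible decomposition of the variety $Z(I^{(*)})$ identified in Proposition~\ref{prp:Z(f*)}. Each ideal appearing on the right-hand side of (\ref{eqn:rad(f*)}) is already prime, so it suffices to put those primes in bijection with the irreducible components of $Z(I^{(*)})$ and invoke $\sqrt{I^{(*)}} = \bigcap I(L)$ over the components.

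First I would make $Z(I^{(*)})$ fully explicit. A point $\tilde w \in \comps^d$ lies in $Z(I^{(*)})$ iff $\sum_{i=1}^d a_i w_i^k = 0$ for $k = 1, \ldots, d$; by Claim~\ref{clm:x^N} this extends automatically to all $k \geq 1$. Grouping the indices by the value of $w_i$ and applying the same Vandermonde argument used in the proof of Proposition~\ref{prp:Z(f*)}, one obtains that $\tilde w$ is a zero iff every block $B$ of the induced partition of $[d]$ whose common value is nonzero satisfies $\sum_{i \in B} a_i = 0$. Consequently $Z(I^{(*)})$ is the union of the linear subspaces $L_\pi := \{\tilde w : w_i = w_j \text{ for } i, j \text{ in the same block of } \pi,\ w_k = 0 \text{ on the zero block}\}$, taken over all such admissible partitions $\pi$ of $[d]$. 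Each $L_\pi$ is an affine subspace, hence irreducible, and its vanishing ideal is $(x_i - x_j : i, j \text{ in the same nonzero block}) + (x_k : k \text{ in the zero block})$, which is prime because the quotient is a polynomial ring in one variable per nonzero block.

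Next I would match these components to the antichain $\mathcal{A}$ from Proposition~\ref{prp:Z(f*)}: the coarsest admissible partitions correspond to nonzero blocks indexed by the multi-element members $\mathcal{A}_{2+}$ together with a zero block that accounts for the singletons $\{i\} \in \mathcal{A}_1$. Within each multi-element block $A$, the full set of relations $x_i = x_j$ may be replaced by the $|A|-1$ edge-relations of a spanning tree $T(A)$ without changing the ideal generated; this is the standard linear-algebra observation that the kernel of the map $\comps^A \to \comps^{E(K_A)}$ given by all differences equals that given by only the tree differences. Hence the prime for $A$ takes the displayed form $(x_i - x_j : \{i, j\} \in E(T(A)))$, while each $\{i\} \in \mathcal{A}_1$ contributes the coordinate generator $x_i$. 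Taking $\sqrt{I^{(*)}} = \bigcap_\pi I(L_\pi)$ over the irreducible components then yields (\ref{eqn:rad(f*)}), and irreducibility of each $L_\pi$ certifies that these are exactly the associated primes, with no redundancy.

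The main obstacle will be the bookkeeping that identifies $\mathcal{A}$ precisely with the indexing set for the coarsest admissible partitions: one must check that the singletons in $\mathcal{A}_1$ and the multi-element blocks in $\mathcal{A}_{2+}$ assemble coherently (no block is erroneously merged with a singleton from $\mathcal{A}_1$ while preserving the zero-sum condition, and no irreducible component has been overlooked), and that the spanning tree $T(A)$ generates the full component prime rather than a strictly smaller ideal. Once this combinatorial matching is in place, the rest is a direct application of the Nullstellensatz and the irreducibility observation above.
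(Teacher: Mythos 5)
The strategy you outline — apply the Nullstellensatz, read the prime decomposition of $\sqrt{I^{(*)}}$ off the irreducible decomposition of $Z(I^{(*)})$ — is the natural one and is clearly the route the paper intends (the corollary is stated without proof as "following from" Proposition~\ref{prp:Z(f*)}). Your Vandermonde-based derivation of $Z(I^{(*)})$ is also correct, and this is where a problem surfaces that you do not notice: your characterization shows that $Z(I^{(*)})$ is a union of the linear subspaces $L_\pi$ indexed by admissible partitions $\pi$, and a coarsest admissible $\pi$ can have \emph{several} nonzero blocks, giving a component of dimension $>1$ (e.g.\ for $f = x_1 - x_2 + x_3 - x_4$ the point $(1,1,2,2)$ lies in $Z(I^{(*)})$, so the component $\{w_1 = w_2,\ w_3 = w_4\}$ is a $2$-plane). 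This contradicts the assertion in Proposition~\ref{prp:Z(f*)} that $Z(I^{(*)})$ is a union of lines $\comps\bigl(\sum_{i\in A}\tilde e_i\bigr)$ indexed by a single antichain $\mathcal{A}\subseteq 2^{[d]}$. You should reconcile your (correct) derivation with that proposition before invoking it.

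The step you flag as the ``main obstacle'' — matching the components to the antichain — is not merely bookkeeping; it cannot be carried out in the form required by \eqref{eqn:rad(f*)}. The vanishing ideal of a component is $I(L_\pi) = (x_k : k \text{ in the zero block}) + \sum_{B} (x_i - x_j : i,j\in B)$, a \emph{sum} over the nonzero blocks together with the coordinate generators of the zero block. The displayed formula instead intersects one ideal $(x_i - x_j : \{i,j\}\in E(T(A)))$ per multi-element $A$ and one $(x_i)$ per singleton, which dissolves those sums into separate factors of the intersection. These are genuinely different ideals. Concretely, take $d = 3$ and $f = x_1 - x_2 + x_3$. Your computation gives $Z(I^{(*)}) = \{(t,t,0)\}\cup\{(0,t,t)\}$, so
$\sqrt{I^{(*)}} = (x_3, x_1 - x_2)\cap(x_1, x_2 - x_3)$, which contains $x_1 x_3$. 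But the antichain here is $\mathcal{A} = \{\{1,2\},\{2,3\}\}$ with $\mathcal{A}_1 = \emptyset$, and the right-hand side of \eqref{eqn:rad(f*)} becomes $(x_1 - x_2)\cap(x_2 - x_3) = \bigl((x_1-x_2)(x_2-x_3)\bigr)$, which does not contain $x_1 x_3$. Your spanning-tree observation is fine as far as it goes (a spanning tree of a block generates the same equalities as the full set of differences), but it only reproduces $I(L_\pi)$ after you also add the zero-block generators into the \emph{same} ideal; the corollary's formula distributes them across separate factors of an intersection, and that is where the argument breaks. Before continuing, I would recommend checking the intended reading of $T(A)$ and $\mathcal{A}$ against the small example above, because as written both the proposition you rely on and the target formula appear to need repair, and no amount of bookkeeping in the matching step will bridge the gap.
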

By the same token we have a similar statement for a general principal
ideal $I = (f)$ of $R$. By Proposition~\ref{prp:*-gen} we have
the following.
\begin{corollary}
\label{cor:gen-rad(f*)}
For $f = \sum_{i=1}^ka_i{\tilde{x}_i}^{\tilde{p}_i}$ we have $\lambda(f) = k$
and for the principal ideal $I = (f)$ of $R$ we have that 
$I^{(*)} = (f,f^{(2)},\ldots,f^{(k)})$ and there
is an antichain $\mathcal{A}\subseteq 2^{[\lambda(f)]}$ and proper trees
(actually proper paths)
$\{ T(A) : A\in\mathcal{A}_{2+}\}$ such that 
$\sum_{i\in A}a_i  = 0$ for each $A\in\mathcal{A}_{2+}$ and
\begin{equation}
\label{eqn:gen-rad(f*)}  
\sqrt{I^{(*)}} =
\left(\bigcap_{\{i\}\in\mathcal{A}_1}\sqrt{({\tilde{x}_i}^{\tilde{p}_i})}\right)
\cap\left(\bigcap_{A\in\mathcal{A}_{2+}}
\sqrt{({\tilde{x}_i}^{\tilde{p}_i} - {\tilde{x}_j}^{\tilde{p}_j} :
\{i,j\}\in E(T(A)))}\right).
\end{equation}
\end{corollary}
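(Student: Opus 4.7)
The plan is to reduce to the linear case handled in Corollary~\ref{cor:rad(f*)} by a monomial substitution and then pull the result back through the substitution map. First I would apply Proposition~\ref{prp:*-gen} to write
\[
I^{(*)} = (f, f^{(2)}, \ldots, f^{(k)}),
\]
so that the problem is to compute $\sqrt{I^{(*)}}$ for the ideal generated by these $k$ polynomials. The crucial observation is that each $f^{(j)}$ has the form $\sum_{i=1}^k a_i (\tilde{x}^{\tilde{p}_i})^j$; that is, if we introduce formal variables $y_1, \ldots, y_k$ with $y_i = \tilde{x}^{\tilde{p}_i}$ and set $g = a_1 y_1 + \cdots + a_k y_k \in \comps[y_1,\ldots,y_k]$, then $f^{(j)}$ is the pullback of $g^{(j)}$ under the monomial substitution $\varphi : \comps[y_1,\ldots,y_k] \to R$ defined by $y_i \mapsto \tilde{x}^{\tilde{p}_i}$.

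Next I would use Hilbert's Nullstellensatz to write $\sqrt{I^{(*)}} = I(Z(I^{(*)}))$, and compute the zero locus of $I^{(*)}$ by pulling back the zero locus of $(g, g^{(2)}, \ldots, g^{(k)}) \subseteq \comps[y_1,\ldots,y_k]$ through $\varphi$. By Proposition~\ref{prp:Z(f*)} applied in the $y$-variables, there is an antichain $\mathcal{A} \subseteq 2^{[k]}$ with $\sum_{i \in A} a_i = 0$ for each $A \in \mathcal{A}$ such that the zero locus of $(g, g^{(2)}, \ldots, g^{(k)})$ is the union of the coordinate lines $\comps(\sum_{i \in A}\tilde{e}_i)$. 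Pulling this back, the zero locus of $I^{(*)}$ splits as the union, over $A \in \mathcal{A}$, of the sets $\{\tilde{x} : \tilde{x}^{\tilde{p}_i} = 0 \text{ for } i \notin A \text{ and } \tilde{x}^{\tilde{p}_i} = \tilde{x}^{\tilde{p}_j} \text{ for } i,j \in A\}$.

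By the standard correspondence between unions of varieties and intersections of ideals, $\sqrt{I^{(*)}}$ decomposes as an intersection over $A \in \mathcal{A}$. For a singleton $\{i\} \in \mathcal{A}_1$ the corresponding component contributes $\sqrt{(\tilde{x}^{\tilde{p}_i})}$, while for $A \in \mathcal{A}_{2+}$ one needs to encode ``$\tilde{x}^{\tilde{p}_i} = \tilde{x}^{\tilde{p}_j}$ for all $i, j \in A$'', which is equivalent to imposing this equality along the edges of any spanning tree $T(A)$ on the vertex set $A$; taking the radical yields $\sqrt{(\tilde{x}^{\tilde{p}_i} - \tilde{x}^{\tilde{p}_j} : \{i,j\} \in E(T(A)))}$. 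Assembling the pieces gives the formula~(\ref{eqn:gen-rad(f*)}).

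The main obstacle is justifying the pullback step: the map $\varphi$ is rarely injective (the monomials $\tilde{x}^{\tilde{p}_i}$ may satisfy relations), so pulling back the decomposition of $Z(g, g^{(2)}, \ldots, g^{(k)})$ requires verifying that no new components appear and that the pieces indexed by $A \in \mathcal{A}_{2+}$ are genuinely captured by a spanning tree on $A$ (so that the choice of tree is immaterial modulo the radical). This is handled by noting that the conditions $\tilde{x}^{\tilde{p}_i} = \tilde{x}^{\tilde{p}_j}$ form an equivalence relation on $A$, so any spanning tree gives the same radical ideal, exactly as in the proof of Corollary~\ref{cor:rad(f*)}.
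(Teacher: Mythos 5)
Your plan --- reduce to the linear case of Proposition~\ref{prp:Z(f*)} by the monomial substitution $y_i\mapsto\tilde{x}^{\tilde{p}_i}$, use the Nullstellensatz, and pull back the zero locus --- is exactly what the paper's ``by the same token'' directive means, so the overall route is the intended one and the reduction $f^{(j)}(\tilde{x}) = g^{(j)}(\tilde{x}^{\tilde{p}_1},\ldots,\tilde{x}^{\tilde{p}_k})$ is set up correctly.

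However there is a genuine gap in the last step, where you pass from the pulled-back variety to its vanishing ideal. You correctly write that the component of $Z(I^{(*)})$ coming from $A\in\mathcal{A}$ is
\[
V_A = \{\tilde{x}\in\comps^d : \tilde{x}^{\tilde{p}_i} = 0 \text{ for } i\notin A,\ \ \tilde{x}^{\tilde{p}_i}=\tilde{x}^{\tilde{p}_j} \text{ for } i,j\in A\},
\]
but the ideals you then attach to these components do not match. For a singleton $A=\{i\}$ the vanishing ideal of $V_{\{i\}}$ is $\sqrt{(\tilde{x}^{\tilde{p}_j} : j\ne i)}$, \emph{not} $\sqrt{(\tilde{x}^{\tilde{p}_i})}$: the condition defining $V_{\{i\}}$ is that the \emph{other} monomials vanish, while $\tilde{x}^{\tilde{p}_i}$ is left free. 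Likewise for $A\in\mathcal{A}_{2+}$ the vanishing ideal of $V_A$ is $\sqrt{(\tilde{x}^{\tilde{p}_j}:j\notin A) + (\tilde{x}^{\tilde{p}_i}-\tilde{x}^{\tilde{p}_j}:\{i,j\}\in E(T(A)))}$; dropping the generators $\tilde{x}^{\tilde{p}_j}$ for $j\notin A$ gives a strictly smaller ideal (hence a strictly larger variety) whenever $A\ne [k]$. Your own pullback computation therefore contradicts the ideal formula you conclude with, and the ``equivalence relation'' remark only addresses the irrelevance of the choice of spanning tree, not this discrepancy. You should either repair the ideal-theoretic side to match the varieties $V_A$ you computed, or explain why the dropped generators can be omitted after taking the intersection over all $A\in\mathcal{A}$ --- neither is done here, and the latter seems to require an additional argument about how the members of the antichain interact. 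Note also that you invoke Proposition~\ref{prp:Z(f*)} for the antichain condition $\sum_{i\in A}a_i=0$ for \emph{all} $A\in\mathcal{A}$, whereas the Corollary you are proving only imposes this condition for $A\in\mathcal{A}_{2+}$; you should reconcile which antichain you actually mean.
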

{\sc Note:} The ideals in the intersection on the right of
(\ref{eqn:gen-rad(f*)}) are not necessarily prime ideals as was
the case in (\ref{eqn:rad(f*)}).
\begin{definition}
\label{def:bimonomial}
Call an ideal $B$ of $R$ {\em bimonomial} if it is either
a monomial ideal, generated by monomials $\tilde{x}_i^{\tilde{p}_i}$ of 
$[x_1,\ldots,x_d]$ or it is generated by binomials
$\tilde{x}_i^{\tilde{p}_i} - \tilde{x}_j^{\tilde{p}_j}$.
\end{definition}
As mentioned in the beginning of Section~\ref{sec:defs-obs},
the binomial ideals of $R$ form a natural class of ideals that are
clearly power-closed. As the right hand side of (\ref{eqn:gen-rad(f*)})
involves the radical of bimonomial ideals, it seems therefore 
of interest to discuss the relationship between the radical of
an ideal and its power-closure.

If $f\in\sqrt{I^{(*)}}$ and $i\in\nats$, then $f^N \in I^{(*)}$ for some 
natural $N$ and so $(f^{(i)})^N  = (f^N)^{(i)} \in I^{(*)}$, or
$f^{(i)} \in\sqrt{I^{(*)}}$, and so $\sqrt{I^{(*)}}$ is power-closed.
Therefore we get in general that 
$(\sqrt{I})^{(*)} \subseteq (\sqrt{I^{(*)}})^{(*)} = \sqrt{I^{(*)}}$.
Further, if $I$ is itself power-closed, then 
$(\sqrt{I})^{(*)} = (\sqrt{I^{(*)}})^{(*)} = \sqrt{I^{(*)}}$.
We summarize in the following.
\begin{observation}
\label{obs:rad*}
For any ideal $I$ of $R$ the ideal $\sqrt{I^{(*)}}$ is power-closed
and $(\sqrt{I})^{(*)} \subseteq \sqrt{I^{(*)}}$.
If $I$ is itself power-closed then
$(\sqrt{I})^{(*)} = \sqrt{I^{(*)}} = \sqrt{I}$ and so $\sqrt{I}$
is also power-closed.
\end{observation}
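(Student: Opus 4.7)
The plan is to verify the three assertions of the observation in order, using only the identity $(f^N)^{(i)} = (f^{(i)})^N$ (a special case of $(fg)^{(i)}= f^{(i)}g^{(i)}$ from Observation~\ref{obs:IJ-p-closed}), the monotonicity of $\sqrt{\,\cdot\,}$, and the monotonicity and idempotence of the closure operator $(\cdot)^{(*)}$ from Proposition~\ref{prp:closure}.

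First I would show directly that $\sqrt{I^{(*)}}$ is power-closed. Fix $f\in\sqrt{I^{(*)}}$; by definition there is $N\in\nats$ with $f^N\in I^{(*)}$. For each $i\in\nats$, apply the substitution $x_j\mapsto x_j^i$ to obtain $(f^{(i)})^N = (f^N)^{(i)}$, and then invoke the power-closedness of $I^{(*)}$ to conclude $(f^{(i)})^N \in I^{(*)}$, whence $f^{(i)}\in\sqrt{I^{(*)}}$. This is the key computation and everything else is formal.

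Next I would derive the containment $(\sqrt{I})^{(*)}\subseteq \sqrt{I^{(*)}}$. Since $I\subseteq I^{(*)}$, monotonicity of the radical gives $\sqrt{I}\subseteq\sqrt{I^{(*)}}$; applying $(\cdot)^{(*)}$ to both sides and using that $\sqrt{I^{(*)}}$ is already power-closed by the first step (so $(\sqrt{I^{(*)}})^{(*)} = \sqrt{I^{(*)}}$ by idempotence, Proposition~\ref{prp:closure}(iii)) yields the desired inclusion.

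Finally, when $I$ is power-closed we have $I = I^{(*)}$, so $\sqrt{I} = \sqrt{I^{(*)}}$. Combining this with the inclusion $(\sqrt{I})^{(*)}\subseteq \sqrt{I^{(*)}}$ from the previous step and the enlarging property $\sqrt{I}\subseteq (\sqrt{I})^{(*)}$ from Proposition~\ref{prp:closure}(i) forces $(\sqrt{I})^{(*)} = \sqrt{I} = \sqrt{I^{(*)}}$, and in particular $\sqrt{I}$ is power-closed. There is no real obstacle here: the only mildly nontrivial ingredient is the identity $(f^N)^{(i)} = (f^{(i)})^N$, and once that is noted the three statements follow in a few lines from the formal properties of closure operators.
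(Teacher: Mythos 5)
Your proof is correct and matches the paper's own argument almost exactly: the key step $(f^{(i)})^N = (f^N)^{(i)}\in I^{(*)}$ establishing that $\sqrt{I^{(*)}}$ is power-closed is identical, and the remaining containments follow from the same formal properties of the closure operator. The only cosmetic difference is in the last part, where you sandwich $(\sqrt{I})^{(*)}$ between $\sqrt{I}$ and $\sqrt{I^{(*)}}=\sqrt{I}$, while the paper simply substitutes $I=I^{(*)}$ into the chain $(\sqrt{I})^{(*)} = (\sqrt{I^{(*)}})^{(*)} = \sqrt{I^{(*)}}$; both are immediate.
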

By Observations~\ref{obs:rad*} and~\ref{obs:IJ-p-closed} we have
\begin{corollary}
\label{cor:Z/2}
For any power-closed ideal $I$ of $R$ and for any half integer
$k\in {\nats}/2$ the ideal $I^k$ is also power-closed.
\end{corollary}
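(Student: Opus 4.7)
The plan is essentially to unpack the notation $I^k$ for $k\in\nats/2$ and then apply the two cited observations in sequence. The natural convention is that $I^{1/2}$ denotes the radical $\sqrt{I}$, so that for $k = n/2$ with $n\in\nats$ we have
\[
I^{n/2} = \underbrace{\sqrt{I}\cdot\sqrt{I}\cdots\sqrt{I}}_{n \text{ times}},
\]
which equals $I^m$ when $n = 2m$ (since $(\sqrt{I})^2$ and $I$ have the same radical, but in fact the convention reduces to $I^m$ in the sense of an $m$-fold product) and equals $I^m\cdot\sqrt{I}$ when $n = 2m+1$. In either case $I^{n/2}$ is a finite product of copies of $I$ and $\sqrt{I}$.

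First I would invoke Observation~\ref{obs:rad*}: since $I$ is power-closed, so is $\sqrt{I}$. Next I would appeal to Observation~\ref{obs:IJ-p-closed}: the product of any finite collection of power-closed ideals of $R$ is again power-closed. Applying this to the $n$-fold product above gives that $I^{n/2}$ is power-closed.

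The only real content is choosing the interpretation of $I^{n/2}$; after that, the corollary is a direct consequence of the two cited observations, with no further computation needed. Thus the obstacle is not mathematical but notational, and once the convention $I^{1/2} = \sqrt{I}$ is fixed the proof is a one-line composition: power-closedness is preserved by radicals (Observation~\ref{obs:rad*}) and by products (Observation~\ref{obs:IJ-p-closed}), hence by any product of the form $I^a(\sqrt{I})^b$ for $a,b\in\nats\cup\{0\}$, which covers all half-integer powers.
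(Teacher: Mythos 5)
Your proposal is correct in substance and matches the paper's (implicit) argument exactly: read $I^{k}$ for half-integer $k$ as $I^{m}$ when $k=m\in\nats$ and as $I^{m}\cdot\sqrt{I}$ when $k=m+\tfrac12$, then apply Observation~\ref{obs:rad*} to get that $\sqrt{I}$ is power-closed and Observation~\ref{obs:IJ-p-closed} to get that the relevant finite products are power-closed. One small caveat: the displayed equation $I^{n/2}=(\sqrt{I})^{n}$ is not an identity of ideals (already $(\sqrt{I})^{2}\neq I$ in general, as your parenthetical half-acknowledges), so it should not be written as an equality; it is better to \emph{define} $I^{m+1/2}:=I^{m}\sqrt{I}$ directly, after which the two cited observations close the argument with no detour.
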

It would be nice to have the two closure operators commute,
but that is not the case.
\begin{example}
\label{exa:sqrt-vs-*}  
For the prime ideal $P = (x+y)$ of $\comps[x,y]$ we have
$(\sqrt{P})^{(*)} = P^{(*)} = (x+y,xy)$ and 
$\sqrt{P^{(*)}} = \sqrt{(x+y,xy)} = (x,y)$, so we see that 
$(\sqrt{I})^{(*)} = \sqrt{I^{(*)}}$ does not hold in general.
\end{example}
Recall from Observation~\ref{obs:IJ-p-closed} that if $I_1,\ldots,I_k$ 
are power-closed ideals
then so is their intersection $I_1\cap\cdots\cap I_k$. Since
each prime ideal in the intersection of (\ref{eqn:rad(f*)}) is
clearly power-closed, then so is $\sqrt{I^{(*)}}$ for that 
particular ideal. Also, by Observation~\ref{obs:rad*} we have
that the intersection on the right of (\ref{eqn:gen-rad(f*)})
is clearly power-closed; being the intersection of radicals of
bimonomial ideals.

Since taking the radical of ideals respects intersection; 
$\sqrt{I\cap J} = \sqrt{I}\cap\sqrt{J}$
(See~\cite[Thm~2.7, p.~380]{Hungerford},~\cite[p.~9]{Atiyah-Macdonald}),
Corollary~\ref{cor:gen-rad(f*)} 
states is that for any principal ideal $I = (f)$ of $R$ we have
\begin{equation}
\label{eqn:B_i}
\sqrt{I^{(*)}} = \bigcap_i\sqrt{B_i} = \sqrt{\bigcap_i B_i},
\end{equation}
which by its mere form is power-closed, since each bimonomial ideal
$B_i$ is power-closed.

By the discussion here above it seems natural to pass onto 
$\sqrt{I^{(*)}}$ for the given ideal $I$ for several reasons:
(i) If $I$ is power-closed then so is its radical by 
Observation~\ref{obs:rad*}.
(ii) The zero locus or the affine variety of an ideal is the same as
its radical. 
(iii) From the minimal primary decomposition of an ideal we obtain
a {\em unique} representation of its radical as the intersection
of the prime ideals associated with the ideal. This, perhaps, might 
give insight into the structure of $\sqrt{I^{(*)}}$ and hence $I$ in
terms of more primitive and even simpler entities.

Consider finally a general ideal $I = (f_1,\ldots,f_n)$ of $R$. 
Writing $I_i = (f_i)$ for each $i$ we have $I = \sum_iI_i$ and
hence by Proposition~\ref{prp:closure}
we have $I^{(*)} = \sum_{i=1}^nI_i^{(*)}$.
Since $\sqrt{I+J} = \sqrt{\sqrt{I}+\sqrt{J}}$ and by (\ref{eqn:B_i})
each $\sqrt{I_i^{(*)}} = \sqrt{\bigcap_jB_{i\/j}}$, we then get
for our general ideal $I$ that
\[
\sqrt{I^{(*)}} = \sqrt{\sum_{i=1}^nI_i^{(*)}} 
= \sqrt{\sum_{i=1}^n\sqrt{I_i^{(*)}}} 
= \sqrt{\sum_{i=1}^n\sqrt{\bigcap_j B_{i\/j}}}
= \sqrt{\sum_{i=1}^n\bigcap_j B_{i\/j}},
\]
which also by its mere form is power-closed, since each bimonomial idea
$B_{i\/j}$ is power-closed, and which in particular is consistent with
Observation~\ref{obs:rad*}, but more importantly also reveals a specific
internal bimonomial structure.

\bibliographystyle{amsalpha}
\bibliography{pcix-arXiv}

\end{document}